 \theoremstyle{plain}
\newtheorem{thm}{Theorem}[section]
  \theoremstyle{plain}
  \newtheorem{prop}[thm]{Proposition}
  \theoremstyle{plain}
  \newtheorem{cor}[thm]{Corollary}
  \theoremstyle{plain}
  \newtheorem{lem}[thm]{Lemma}
\theoremstyle{plain}
  \newtheorem{rem}[thm]{Remark}
\theoremstyle{plain}
\theoremstyle{plain}
  \newtheorem{defn}[thm]{Definition}
\theoremstyle{plain}
  \theoremstyle{plain}
    \theoremstyle{plain}
  \newtheorem{exam}[thm]{Example}
\renewcommand{\exp}{\mathrm{Exp}}
\newcommand{\bbR}{\mathbb{R}}
\def\co{\colon\thinspace}
\newcommand{\injects}{\hookrightarrow}
\newcommand{\homeo}{\cong}
\newcommand{\surjects}{\twoheadrightarrow}
\newcommand{\isom}{\cong}
\newcommand{\leqs}{\leqslant}
\newcommand{\geqs}{\geqslant}
\newcommand{\cross}{\times}
\newcommand{\heq}{\simeq}
\newcommand{\maps}{\longrightarrow}
\newcommand{\srm}[1]{\stackrel{#1}{\maps}}
\newcommand{\srt}[1]{\stackrel{#1}{\to}}
\newcommand{\e}{\emph}
\newcommand{\wt}[1]{\widetilde{#1}}
\newcommand{\stab}{\mathrm{Stab}}
\newcommand{\C}{\mathbb{C}}
\newcommand{\R}{\mathbb{R}}
\newcommand{\aq}{/\!\!/}
\newcommand{\X}{\mathfrak{X}}
\renewcommand{\hom}{\mathrm{Hom}}
\newcommand{\SL}{\mathrm{SL}}
\newcommand{\GL}{\mathrm{GL}}
\newcommand{\SO}{\mathrm{SO}}
\newcommand{\Sp}{\mathrm{Sp}}
\newcommand{\SU}{\mathrm{SU}}
\newcommand{\F}{\mathtt{F}}
\newcommand{\XC}[1]{\mathfrak{X}_{#1}}
\newcommand{\Z}{\mathbb{Z}}
\newcommand{\Ad}{\mathrm{Ad}}
\newcommand{\p}{\mathrm{P}} 
\newcommand{\codim}{\mathrm{codim}}
   \def\MR#1{}
\begin{document}

\title[Homotopy of Character Varieties]{Bad Representations and Homotopy of Character Varieties}
 
\author[C. Gu\'erin]{Cl\'ement Gu\'erin}

\address{Science and Technology Department, Mayotte University Center, Route nationale 3, BP 53, 97660 Dembeni, France}

\email{clement.guerin@univ-mayotte.fr}
 
\author[S. Lawton]{Sean Lawton}

\address{Department of Mathematical Sciences, George Mason University,
4400 University Drive,
Fairfax, Virginia  22030, USA}

\email{slawton3@gmu.edu}

\author[D. Ramras]{Daniel Ramras}

\address{Department of Mathematical Sciences, Indiana University-Purdue University Indianapolis, 402 N. Blackford, 
Indianapolis, IN 46202, USA}

\email{dramras@math.iupui.edu}


\keywords{character variety, Borel-de Siebenthal subgroups, free group, homotopy groups, singularities}

\subjclass[2010]{Primary 14B05, 14L24,  55Q05; Secondary 14D20, 14L30, 55U10}


\begin{abstract}Let $G$ be a connected reductive complex affine algebraic group, and let  $\XC{r}$ denote the moduli space of $G$-valued representations of a rank $r$ free group.  We first characterize the singularities in $\XC{r}$, extending a theorem of Richardson and proving a Mumford-type result about topological singularities; this resolves conjectures of Florentino-Lawton.  In particular, we  compute the codimension of the orbifold singular locus using facts about Borel-de Siebenthal subgroups.  We then use the codimension bound to calculate higher homotopy groups of the smooth locus of $\XC{r}$, proving conjectures of Florentino-Lawton-Ramras.  Lastly, using the earlier analysis of Borel-de Siebenthal subgroups, we prove a conjecture of Sikora about centralizers of irreducible representations in Lie groups. 
\end{abstract}



\maketitle

\tableofcontents

\section{Introduction}
Given a finite rank ($r\geqs 1$) free group $\F_r$ and a connected reductive complex affine algebraic group $G$, the {\it $G$-character variety of $\F_r$} is defined as the Geometric Invariant Theory (GIT) quotient of the variety of group homomorphisms $\hom(\F_r,G)$ by the conjugation action of $G$.  Denote this quotient variety by $\X_r(G):=\hom(\F_r,G)\aq G$.   In \cite{FLR}, Florentino-Lawton-Ramras initiated a systematic study, kin to \cite{BGG}, of the homotopy groups of $\X_r(G)$, its GIT stable locus, and also its smooth locus $\mathcal{X}_r(G)$.  Among many theorems in \cite{FLR}, it is proved that $\pi_2(\mathcal{X}_r(G))\cong\Z/n \Z$ when $G$ is the general or special linear group of complex $n\times n$ matrices.  It was then conjectured that this result should generalize to: $$\pi_2(\mathcal{X}_r(G))\cong\pi_1(PG),$$ where $PG$ is the quotient of $G$ by its center.  In this paper, we prove this conjecture and go further and compute $\pi_k(\mathcal{X}_r(G))$ for $0\leqs k\leqs 4$.  Additionally, we obtain periodicity-type results in the higher homotopy groups for the classical groups $G$ (types $A_n$, $B_n$, $C_n$, $D_n$) and some additional higher homotopy groups for the exceptional groups $G_2$, $F_4$, and $E_6, E_7, E_8$.  Surprisingly, we show in a stable range, that $$\pi_k(\mathcal{X}_r(G))\cong \pi_k(G)^r\times \pi_{k-1}(PG).$$  These results comprise Theorem \ref{splitting} and its corollaries.

The proofs of the above theorems rely on a close analysis of the singular locus of these moduli spaces, and in particular, the orbifold singularities.  As $\hom(\F_r,G)$ is naturally identified with the Cartesian product $G^r$, work of Richardson \cite[Theorem 8.9]{Ri} completely describes the algebraic singularities (and by \cite{Milnor} also the $C^k$-singularities for all $k\geqs1$) when $G$ is semisimple, each simple factor in the Lie algebra of $G$ has rank 2 or more, and $r\geqs 2$.  In \cite[Theorem 7.4]{FLR}, Richardson's theorem was generalized to the case when $G$ is connected and reductive.  We extend this further here (Theorem \ref{conj-thm}) by removing the assumption that each simple factor in the Lie algebra of $G$ has rank 2 or more under the stronger condition that $r\geqs 3$ (resolving conjectures in \cite{FL2, FLR}). 

In \cite{Mumford}, Mumford shows if a complex algebraic surface is normal then all singularities are topological singularities (every neighborhood of a singularity in the analytic topology is not homeomorphic to a ball).  We show a ``Mumford-type'' result (Theorems \ref{redugly-thm} and \ref{badugly-thm}) for the class of varieties $\X_r(G)$ (which are all normal), namely, that if $r\geqs 3$, or if $r\geqs 2$ and each simple factor in the Lie algebra of $G$ has rank 2 or more, then all singularities in $\X_r(G)$ are topological singularities (resolving a conjecture in \cite{FL2}). 

Our classification of orbifold singularities and the resulting bounds on codimension
(Theorems \ref{codimbad} and \ref{gencodim}) build on the fundamental work in the thesis of Gu\'erin \cite{Gu1,Gu2}.  In particular, we study Borel-de Siebenthal subgroups, which are, loosely speaking, proper subgroups of a Lie group whose maximal tori are in fact maximal tori of the ambient group (see Definition \ref{def:BdS}).  

Using our results on orbifold singularities, we settle the following conjecture of Sikora as well. A complex reductive Lie group $G$ has property CI if the centralizer of every irreducible subgroup of $G$ coincides with the center of $G$.  In \cite{Si4} it is asked whether the special linear groups are the only simple CI groups.  We answer this question affirmatively with Theorem \ref{CI-thm}. 

\section*{Acknowledgments}
Lawton and Ramras were supported by Collaboration Grants from the Simons Foundation, USA.  We thank IHES for hosting Lawton and Gu\'erin in March 2019 where much of this work was completed.  Gu\'erin was supported by the FNR grant OPEN/16/11405402.  We thank a anonymous referee for very helpful comments and suggestions.

\section{Preliminaries}
Let $\Z$ denote the ring of integers, $\R$ the field of real numbers, and $\C$ the field of complex numbers.

\subsection{Affine Varieties}
By a complex affine variety we mean the zero locus of a collection of polynomials over $\C$. To every affine variety $X$, arising as the zeros of $\{f_1,...,f_n\}\subset\C[x_1,...,x_m]$, there is a {\it coordinate ring} $\C[X]:=\C[x_1,...,x_m]/\langle f_1,...,f_n\rangle$.   Any function $f:X\to Y$ between affine varieties will be called {\it regular} if its component functions are elements in $\C[X]$.

Identifying $X$ with its image in $\C^m$ allows one to give $X$ the subspace topology from the Euclidean metric topology on $\C^m\cong\R^{2m}$.  This topology will be called the {\it analytic} or {\it Euclidean} topology on $X$.  It is not dependent on the choice of presentation of $\C[X]$ up to homeomorphism.  There is also the {\it Zariski} topology on $X$ where closed sets are defined to be the zeros of collections of polynomials in $\C[X]$.  The variety $X$ is irreducible if it is so in the Zariski topology, which implies it is connected in the analytic topology. {\it Throughout this paper we will be considering the analytic topology on complex affine varieties unless stated otherwise.}

$X$ is naturally identified as a set with the spectrum of maximal ideals $\mathrm{Spec}_{max}(\C[X])$ by Hilbert's Nullstellensatz (\cite{HilbertNull}).  Defining a closed set in $\mathrm{Spec}_{max}(\C[X])$ to be the maximal ideals containing a fixed ideal $I$ makes the identification of $X$ with its max-spectrum a homeomorphism (with respect to the Zariski topology). This allows one to create affine varieties by considering finitely generated algebras over fields.  For example, given affine varieties $X,Y$ we can naturally give the Cartesian product $X\times Y$ the structure of an affine variety whose coordinate ring is $\C[X]\otimes_\C\C[Y]$.  

There is a basis for the Zariski topology on $X$ given by the open sets $B_f:=\{x\in X\ |\ f(x)\not= 0\}$ for $f\in \C[X]$.  From this one defines a sheaf of $\C$-algebras on $X$, denoted by $U\mapsto \mathcal{O}_X(U)$, where $\mathcal{O}_X(B_f)=\C[X][1/f]$.  This makes $X$ a locally ringed space.

The dimension of $X$, denoted $\dim_\C(X)$, is the Krull dimension of $\C[X]$, that is, the supremum of the lengths of all chains of prime ideals.  

Given a point $x\in X$, let $\mathfrak{m}_x$ be the corresponding maximal ideal in $\C[X]$.  We can define the Zariski tangent space of $x$ to be the vector space $(\mathfrak{m}_x/(\mathfrak{m}_x)^2)^*$.  A point is {\it smooth} if the dimension of its Zariski tangent space is equal to the dimension of $X$.  In the analytic topology, around each smooth point there is an open set that is holomorphic to an open subset of $\C^d$ where $d=\dim_\C(X)$.  On the other hand, singular points (points that are not smooth) {\it may} admit neighborhoods homeomorphic to Euclidean balls, but such a homeomorphism is never $C^k$-smooth for any $k\geqs 1$ (\cite{Milnor}).  If such a local homeomorphism around a singular point does not exist, we will say that point is a {\it topological} singularity (which we also call {\it ugly}).  

\subsection{Reductive Groups}
A complex affine algebraic group $G$ ($\C$-group for short) is a group that is also a complex affine variety where the group operations are regular functions.  Such a group is {\it reductive} if its radical (the identity component of its maximal normal solvable subgroup) is an algebraic torus (isomorphic to product of $\C^*$'s).  {\it Unless otherwise stated, in this paper $G$ will always be a reductive $\C$-group.}

It is a theorem (\cite{OnVi}) that every reductive $\C$-group is the complexification of a compact Lie group (and vice versa).  Consequently, every reductive $\C$-group has a finite number of components.  When $G$ is further assumed to be connected (as we will do), the central isogeny theorem (\cite{milne}) shows that $G\cong DG\times_F T$, where $DG=[G,G]$ is the derived subgroup, $T$ is a maximal central algebraic torus, and $F=DG\cap T$ is finite.  Since $DG$ is semisimple, its universal cover $\widetilde{DG}$ is a finite product of simply connected simple Lie groups (which are classified by type: $A_n, B_n, C_n, D_n, E_6, E_7, E_8, F_4, G_2$).

A maximal connected solvable subgroup of $G$ is called a {\it Borel} subgroup. A Zariski 
 closed 
 subgroup $P$ is {\it parabolic} if $P$ contains a Borel subgroup.  This is equivalent to $G/P$ being a projective variety (\cite{Borel}).  A {\it Levi} subgroup of a Zariski closed
 subgroup $H\subset G$ is a 
 Zariski closed, 
 connected subgroup $L$ such that $H$ is a semi-direct product of $L$ and the unipotent radical of $H$. Thus all Levi subgroups are reductive. 

\subsection{Geometric Invariant Theory}
General references for Geometric Invariant Theory (GIT) which we refer to are \cite{MFK} and \cite{Do}. Given a reductive $\C$-group $G$ acting on an affine variety $X$ (via regular mappings), there is an induced action on $\C[X]$ via $g\cdot f(x)=f(g^{-1}\cdot x)$.  Any variety with such a $G$-action is called a $G$-variety.  It is a theorem of Nagata (\cite{Na}) that the ring of invariants $\C[X]^G$ is finitely generated since $G$ is reductive (otherwise, it may not be).  Consequently, there is an affine variety $X\aq G:=\mathrm{Spec}_{max}(\C[X]^G)$ called the {\it categorical quotient} of $X$ by $G$ and a regular morphism $\pi_G:X\to X\aq G$ dual to the inclusion $\C[X]^G\hookrightarrow \C[X]$.  It is universal in the sense that every $G$--invariant morphism of varieties $f:X\to Y$
uniquely factors through $\pi_G:X\to X\aq G$.  It is a {\it good} categorical quotient since for every Zariski open $U\subset X\aq G$, we have $\mathcal{O}_{X\aq G}(U)\cong\mathcal{O}_X(\pi_G^{-1}(U))^G$, and disjoint closed $G$-invariant subsets of $X$ map to disjoint closed subsets of $X\aq G$.  Moreover, for every $[x]\in X\aq G$, $\pi_G^{-1}([x])$ contains a unique closed orbit, and $\pi_G(x)=\pi_G(y)$ if and only if the closures of the $G$-orbits of $x$ and $y$ intersect non-trivially.

For a point $x\in X$, let $G(x)$ denote its $G$-orbit and $G_x$ denote its stabilizer.  Let $Z=\cap_{x\in X}G_x$. A point $x\in X$ is called {\it polystable} if $G(x)$ is closed.  Following \cite{Ri}, if $x$ is polystable and $G_x/Z$ is finite the point $x$ is called {\it stable}, and if $G_x=Z$ it is called {\it good}.  The points that are not polystable are called {\it unstable}.  We note that this definition of stable coincides with that of \cite{MFK} for the induced effective action of $G/Z$ on $X$.

Let $X^*$ be the set (or locus) of polystable points, $X^s$ the set of stable points, and $X^g$ the set of good points.  Then $X^*, X^s,$ and $X^g$ are constructible (finite union of locally closed sets), and $X^s$ and $X^g$ are Zariski open (but may be empty).  The action of $G/Z$ on $X^s$ and $X^g$ is proper and so $X^s\aq G\cong X^s/G$ is an orbifold and $X^g\aq G\cong X^g/G$ is a manifold (\cite{JM}).  Any point that is stable but not good will be called {\it bad}; such points are at worst orbifold singular.

By \cite{Lu3}, the orbit space $X^*/G$ is homeomorphic to $X\aq G$ with the analytic topology (see also \cite[Theorem 2.1]{FL4} for an elementary proof).  Moreover, by \cite[Proposition 3.4]{FLR} the non-Hausdorff orbit space $X/G$ is homotopic to $X\aq G$ with the analytic topology.

\subsection{Luna's Slice Theorem}
Let $x\in X^*$, then $G_x$ is a reductive subgroup of $G$.  Let $S_x$ be a $G_x$-invariant locally closed affine subvariety of $X$ containing $x$.  Then there is a morphism $G\times S_x\to X$ given by $(g,s)\mapsto g\cdot s$ and also a $G_x$-action on $G\times S_x$ given by $h\cdot (g,s)=(gh^{-1},h\cdot s)$.  Let $G\times_{G_x}S_x:=(G\times S_x)\aq G_x$.  There is then an induced morphism $\mu:G\times_{G_x}S_x\to X$ given by $[g,s]\mapsto g\cdot s$, and the projection $G\times S_x\to S_x$ induces an epimorphism $G\times_{G_x}S_x\to S_x\aq G_x\cong (G\times_{G_x}S_x)\aq G$. 

We say a morphism $\psi:X\to Y$ is \'etale at $x$ if and only if there is an isomorphism of the completion of local rings: $\widehat{\mathcal{O}_x}\cong\widehat{\mathcal{O}_{\psi(x)}}.$  This implies, in the analytic topology, there is a local homeomorphism at $x$, and at smooth points there is a local diffeomorphism (see \cite{Sch3}).

In \cite{Lu1} (see also \cite{DrJ}) it is proved that for every $x\in X^*$, there exists a $G_x$-invariant locally closed affine subvariety $S_x\subset X$ containing $x$ such that the following diagram commutes:

$$\xymatrix{ G\times_{G_x}S_x \ar[r]^{\mu} \ar[d] & X\ar[d]^{\pi_G} \\ S_x\aq G_x  \ar[r]^{\mu\aq G} & X\aq G,}
$$
where the image of $\mu$ is open, and horizontal morphisms are \'etale.  The subvariety $S_x$ is called an \e{\'etale slice} and this theorem is known as {\it Luna's Slice Theorem}.

Let $N_x:=T_x(X)/T_x(G(x))$. Then $T_x(G(x))$ is invariant under the natural action of $G_x$ on $T_x (X)$, and the induced action of $G_x$ on $N_x:=T_x(X)/T_x(G(x))$ is called the {\it slice representation}. 
Assume that $x$ is a smooth (polystable) point in $X$.  Then there exists a $G_x$-invariant morphism $\varphi:X\to T_x(X)$ that is \'etale at $x$ and $\varphi(x)=0$.  In this case, Luna's Slice Theorem implies there exists a commutative diagram:
$$\xymatrix{ S_x \ar[r]^{\varphi|_{S_x}} \ar[d] & N_x\ar[d]^{\pi_{G_x}} \\ S_x\aq G_x  \ar[r]^{\varphi|_{S_x}\aq G_x\ \ } & N_x\aq G_x,}$$ where the image of $\varphi|_{S_x}$ is open, and horizontal morphisms are \'etale.

Putting these two theorems together, we have:

\begin{prop}\label{sing-princ}

Let $G$ be a connected reductive $\C$-group, $X$ a smooth affine $G$-variety over $\C$, and $x\in X^*$. Then $[x]$ is smooth, singular, or ugly in $X\aq G$ if and only if $[x]$ is respectively smooth, singular, or ugly in $N_x\aq G_x$.
\end{prop}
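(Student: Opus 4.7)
The plan is to combine the two commutative diagrams from Luna's slice theorem into a chain (a ``roof'') of \'etale correspondences linking $[x]\in X\aq G$ to $[0]\in N_x\aq G_x$, and then to invoke the two consequences of being \'etale at a point as recalled in the preliminaries: (i) an isomorphism of completed local rings, and (ii) a local homeomorphism in the analytic topology. These two consequences together imply that being smooth, being singular, and being ugly are all preserved across the correspondence.

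First I would apply Luna's slice theorem to produce an \'etale slice $S_x\subset X$ and the corresponding first diagram, whose bottom row provides a morphism $\mu\aq G\co S_x\aq G_x\to X\aq G$ that is \'etale at $[x]$ and sends $[x]\mapsto [x]$. Because $X$ is smooth, the hypothesis of the second formulation of Luna's theorem is satisfied at the polystable point $x$, so we also obtain a $G_x$-equivariant morphism $\varphi\co X\to T_x(X)$ that is \'etale at $x$ with $\varphi(x)=0$, which descends to a morphism $\varphi|_{S_x}\aq G_x\co S_x\aq G_x\to N_x\aq G_x$ that is \'etale at $[x]$ and sends $[x]\mapsto [0]$. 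Assembling these yields the roof
\[ X\aq G\ \xleftarrow{\ \mu\aq G\ }\ S_x\aq G_x\ \xrightarrow{\ \varphi|_{S_x}\aq G_x\ }\ N_x\aq G_x \]
of \'etale maps at the central basepoint $[x]$.

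Next I would exploit consequence (i) across each arm of the roof. An isomorphism of completed local rings preserves both the Krull dimension and the vector-space dimension of $\mathfrak{m}/\mathfrak{m}^2$, so smoothness of a point is an invariant of each arrow. Hence $[x]$ is smooth in $X\aq G$ iff it is smooth in $S_x\aq G_x$ iff $[0]$ is smooth in $N_x\aq G_x$; by negation the same equivalence holds for ``singular.'' For the ugly case I would use consequence (ii): each \'etale arm restricts to a homeomorphism between small Euclidean neighborhoods of the respective basepoints, so the property of admitting a Euclidean neighborhood homeomorphic to an open ball is transported in both directions. Combined with the equivalence of ``smooth'' already obtained, this gives the equivalence of ``ugly'' (which is defined as ``singular and without a ball neighborhood'').

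The main task is really bookkeeping: one must verify that the point $[x]\in S_x\aq G_x$ appearing on the left really is the same point as the one appearing on the right, that the image $[0]\in N_x\aq G_x$ is what is meant by ``$[x]$'' in the statement, and that the dimensions match so that the ``ball'' dimension $d$ on each side is unambiguous. I expect no serious obstacle: the conceptual content has been fully absorbed into Luna's slice theorem and the two consequences of \'etaleness quoted above, so the proposition amounts to a clean ``local model'' translation that replaces a neighborhood of a polystable point in $X\aq G$ by a neighborhood of the origin in the linear quotient $N_x\aq G_x$.
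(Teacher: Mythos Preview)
Your proposal is correct and is exactly the argument the paper has in mind: the proposition is stated immediately after the two Luna slice diagrams with the preface ``Putting these two theorems together, we have,'' and no further proof is given. Your roof $X\aq G \leftarrow S_x\aq G_x \to N_x\aq G_x$ of maps \'etale at the basepoint, together with the two consequences of \'etaleness (completed local ring isomorphism for smooth/singular, local homeomorphism for ugly), is precisely the intended unpacking.
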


We will call $N_x\aq G_x$ the {\it local model} of $[x]$ in $X\aq G$.

\section{Character Varieties: Good, Bad, and Ugly}\label{GBU-sec}
Most of the results in this section are taken directly from \cite{Ri}.  Compare also \cite{Si4}.

\subsection{Representations: Irreducible, Reducible, \& Completely Reducible}
Let $G$ be a reductive $\C$-group. A {\it $G$-representation of $\F_r$}, or simply a {\it representation} when the context is clear, will be a group homomorphism from a rank $r$ free group $\F_r$ to $G$.  We refer to these homomorphisms as representations since there exists (\cite[Theorem 8 on p. 104]{OnVi}) a faithful algebraic morphism $G\hookrightarrow \mathrm{GL}_n(\C)$ for some natural number $n$ (which can be assumed minimal), and so each $\rho\in \hom(\F_r,G)$ is a representation of $\F_r$ in the traditional sense (although not canonically).

Since $\hom(\F_r,G)$ is naturally identified with the Cartesian product $G^r$, and $G$ is an affine variety, $\hom(\F_r,G)$ can be given the structure of an affine variety too; it is necessarily irreducible if $G$ is connected. As $G$ is a Lie group, $\hom(\F_r,G)$ is smooth and has the structure of a (holomorphic) manifold in the analytic topology.

The natural equivalence for representations is conjugation.  Precisely, $G$ acts rationally on $\hom(\F_r,G)$ by $g\cdot \rho(w)=g\rho(w)g^{-1}$ for all $w\in \F_r$.   Let $Z(G)$ be the center of $G$, and $PG:=G/Z(G)$.  Then $PG$ acts on $\hom(\F_r,G)$ by conjugation as well since the center acts trivially.

A representation $\rho\in\hom(\F_r, G)$ is \e{irreducible} if the image $\rho(\F_r)$ is not contained in a proper parabolic subgroup of $G$. By \cite[Theorem 4.1]{Ri}, a representation $\rho \in \hom(\F_r, G)$ is irreducible if and only if it is stable.

Since the stable locus in GIT is Zariski open and $\pi_G$-saturated, the set of irreducible representations $\hom(\F_r,G)^{irr} \subset \hom(\F_r, G)\isom G^r$ is as well, and 
hence $\hom(\F_r,G)^{irr}$ is a smooth submanifold of $G^r$.
Consequently, the complement of the irreducible locus, denoted $\hom(\F_r,G)^{red}$ and called the set of {\it reducible} representations, is Zariski closed and $\pi_G$-saturated (thus, $\hom(\F_r,G)^{red}$ is a finite union of locally closed submanifolds).  Moreover, by \cite[Lemma 3.3]{Ri}, the irreducible locus is non-empty if $r\geqs 2$ (and empty if $r=1$ and $G$ is non-abelian by \cite[Remark(b) to Theorem 4.1]{Ri}).

Since there are only finitely many conjugacy classes of proper parabolic subgroups of $G$, the reducible locus is the union of finitely many algebraic sets of the form $$H_P=\cup_{g\in G/P} \hom(\F_r,gPg^{-1})$$ where $P < G$ is a proper parabolic. Since $G/P$ is complete,  $H_P$ is Zariski closed; see \cite[Proposition 27]{Si4}. 

A representation $\rho$ is {\it completely reducible} if for every proper parabolic $P$ containing $\rho(\F_r)$, there is a Levi subgroup $L<P$ with $\rho(\F_r)< L$.  Note that irreducible representations are completely reducible (vacuously).  By \cite[Theorem 5.2]{Ri}, a representation $\rho \co \F_r \to G$ is completely reducible if and only if it is polystable.  

The GIT quotient $\mathfrak{X}_r(G):=\hom(\F_r,G)\aq G$ is called the {\it $G$-character variety of $\F_r$}.  For any labeled subset $\hom(\F_r,G)^\textsc{label}\subset \hom(\F_r,G)$, we will let $\X_r(G)^\textsc{label}:=\pi_G(\hom(\F_r,G)^\textsc{label})$. So, for example, $\XC{r}(G)^{irr}$ is Zariski open and $\XC{r}(G)^{red}$ is Zariski closed.

\subsection{Representations: Good \& Zariski Dense}

Following  \cite{JM}, we define the {\it good locus} to be the subspace $$\hom(\F_r,G)^{good}\subset\hom(\F_r,G)^{irr}$$
of representations whose $PG$-stabilizer is trivial.  Representations in the good locus will be called {\it good}.  
By \cite[Proposition 1.3]{JM}, $\hom(\F_r,G)^{good}$  is Zariski open, and since it is $\pi_G$--saturated, it is Zariski open as well. In particular, $\hom(\F_r,G)^{good}$ is an open subset and a smooth submanifold of $G^r$ (non-empty for $r\geqs 2$). 

There is another locus that is important to consider.  Let $\hom(\F_r,G)^{zd}$ be the set of representations whose image is Zariski dense.  This set is contained in the good locus, but is generally not equal to it.  For example, if one takes the principal $\SL_2(\C)$ inside some complex reductive group $G$ (different from type $A_1$), then any representation that is Zariski-dense in $\SL_2(\C)$ will induce $($with this principal inclusion$)$ a good representation in $G$ whose image is not Zariski dense (see \cite[Chapter VIII, Paragraph 11, Section 4]{Bour-lie} for the existence of the principal $\SL_2(\C)$ and \cite{Tit} for the fact that it only commutes with central elements in $G$).

To analyze the Zariski dense locus, we will need the following fact regarding Zariski closed subgroups.

\begin{lem}\label{Goursat}
Let $G$ be a semisimple $\C$-group and let $T$ be a complex algebraic torus. If $H$ is a subgroup of $G\times T$ then $H$ is Zariski dense if and only if the projections of $H$ onto the two factors are Zariski dense. 
\end{lem}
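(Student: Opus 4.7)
The forward direction is immediate, since $\pi_G$ and $\pi_T$ are morphisms of algebraic varieties, hence Zariski continuous, and continuous maps take dense sets to dense sets.

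For the converse, my plan is a two-step reduction: first pass to the Zariski closure, then exploit the perfectness of $G$. The Zariski closure $\overline{H}$ is a closed subgroup of $G\times T$ (since multiplication and inversion are continuous, the closure of an abstract subgroup is an abstract subgroup, and closedness makes it an algebraic subgroup). Because images of morphisms of algebraic groups are closed subgroups, $\pi_G(\overline{H})$ is a Zariski closed subgroup of $G$ containing the Zariski dense set $\pi_G(H)$, and hence equals $G$; similarly $\pi_T(\overline{H}) = T$. So the problem reduces to the claim: \emph{any closed subgroup $H' \leqs G \times T$ with $\pi_G(H') = G$ and $\pi_T(H') = T$ equals $G\times T$.}

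To handle this reduced claim I would use that a complex semisimple group is perfect as an abstract group, that is $G = [G,G]$ (which reduces to the simply connected case, where $G$ is generated by root $\SL_2$'s, each of which is perfect). Consider the abstract commutator subgroup $[H', H']$. Since $T$ is abelian and $G$ is perfect,
$$\pi_T\bigl([H',H']\bigr) = [T,T] = \{1\} \quad \text{and} \quad \pi_G\bigl([H',H']\bigr) = [G,G] = G.$$
The first equality forces $[H',H'] \subseteq \ker(\pi_T|_{H'}) = H' \cap (G\times\{1\})$, and then the second says that $[H', H']$ maps onto $G$ via $\pi_G$; since $\pi_G\co G\times\{1\} \to G$ is an isomorphism, this upgrades to $G \times \{1\} \subseteq H'$. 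Finally, for each $t \in T$ pick $(g,t) \in H'$ using $\pi_T(H') = T$; then $(1,t) = (g^{-1}, 1)\cdot (g,t) \in H'$, so $\{1\}\times T \subseteq H'$ as well, and therefore $H' = G\times T$.

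I expect the main subtlety to be the reduction step, specifically verifying that the Zariski closure of an abstract subgroup is a subgroup and that $\pi_G(\overline{H})$ is closed in $G$ (the latter uses the standard fact that images of algebraic group homomorphisms are closed, otherwise Zariski density of $\pi_G(H)$ would not immediately upgrade to surjectivity on the closure). Once these and the abstract perfectness of a complex semisimple group are granted, the Goursat-type commutator manipulation finishes the argument in a few lines.
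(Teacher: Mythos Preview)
Your proof is correct. Both you and the paper begin identically: the forward direction is continuity, and for the converse one passes to the Zariski closure $K=\overline{H}$, which is a closed subgroup surjecting onto both factors. From there the approaches diverge. The paper invokes the classical Goursat lemma: $K$ is the graph of an isomorphism between $G/(G\times\{1\}\cap K)$ and $T/(\{1\}\times T\cap K)$, and since the first quotient is semisimple while the second is abelian, both are trivial, forcing $K=G\times T$. You instead exploit perfectness of $G$ directly via the commutator trick $[K,K]\subseteq G\times\{1\}$ with $\pi_G([K,K])=[G,G]=G$, which immediately gives $G\times\{1\}\subseteq K$ and then $\{1\}\times T\subseteq K$ by the translation argument.

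Your route is a bit more hands-on and avoids citing Goursat's lemma as a black box; the paper's route is more structural and would generalize more readily (e.g.\ to other pairs of factors with no common nontrivial quotient). One small remark: your justification that $\pi_G(\overline{H})$ is closed---namely, that the image of a homomorphism of algebraic groups is a closed subgroup---is the right one. The paper's phrasing (``since $G$ and $T$ are compact in the Zariski topology, the projections are closed maps'') is misleading, since quasi-compactness alone does not make projections closed; the correct reason is exactly the one you give.
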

\begin{proof}
Let $p_1:G\times T\to G$ and $p_2:G\times T\to T$ be the projections onto the factors of $G\times T$.

First assume that $H$ is Zariski dense in $G\times T$.  Then by continuity of $p_1$ and $p_2$, $p_1(H)$ and $p_2(H)$ are both Zariski dense.

Now assume that $p_1(H)$ and $p_2(H)$ are Zariski dense in $G$ and $T$, respectively. Let $K$ denote the Zariski closure of $H$. Then $K$ is a subgroup of $G\times T$, and we claim that $K$ surjects onto $G$ and $T$ via the corresponding projections. Indeed, since $G$ and $T$ are compact in the Zariski topology, the projections are closed maps (with respect to the Zariski topology) and hence $p_i (K)$ is a Zariski closed set containing $p_i (H)$.

By Goursat's Lemma (see \cite[Sections 11-12]{Gou} or \cite[Section 2]{Lam}), $\mathrm{Ker}(p_1)\cap K$ is normal in $T$, $\mathrm{Ker}(p_2)\cap K$ is normal in $G$, and $K$ is identified through $p_1\times p_2$ as the graph of an isomorphism between $G/(\mathrm{Ker}(p_2)\cap K)$ and  $T/(\mathrm{Ker}(p_1)\cap K)$. Since $G/(\mathrm{Ker}(p_2)\cap K)$ is a semisimple Lie group and $T/(\mathrm{Ker}(p_1)\cap K)$ is abelian, both of these (isomorphic) quotients must be trivial. 

As a result, $G=\mathrm{Ker}(p_2)\cap K$ and $T=\mathrm{Ker}(p_1)\cap K$. It follows that $K$ contains both factors $G$ and $T$, whence $K=G\times T$. 
\end{proof}

\begin{prop}\label{zd-prop}
If $r\geqs 2$ and $G$ is a connected, reductive $\C$-group, then $\X_r(G)^{zd}:=\hom(\F_r,G)^{zd}/G$ is Euclidean dense in $\X_r(G)$.
\end{prop}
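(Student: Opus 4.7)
The plan is to reduce the proposition to a density statement on $\hom(\F_r,G)\isom G^r$, then split off the central torus via the isogeny $DG\times T\to G$, apply Lemma~\ref{Goursat}, and handle the torus and semisimple factors independently. Since $\pi_G\co \hom(\F_r,G)\to \XC{r}(G)$ is continuous and surjective in the analytic topology, and since $\hom(\F_r,G)^{zd}\subseteq \hom(\F_r,G)^{good}$ consists of stable points (whose $G$-orbits are closed), we have $\hom(\F_r,G)^{zd}/G=\pi_G(\hom(\F_r,G)^{zd})$; so it suffices to show that $\hom(\F_r,G)^{zd}$ is Euclidean dense in $G^r$.

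By the central isogeny theorem, write $G\isom DG\times_F T$ with $DG$ semisimple, $T$ a central torus, and $F$ finite. The quotient $\psi\co DG\times T\to G$ is a finite surjective morphism of algebraic groups; in the analytic topology it is a finite covering map with deck group $F$, in particular open. Since a surjective morphism of algebraic groups carries Zariski-dense subgroups to Zariski-dense subgroups (the image of the Zariski closure is contained in the closure of the image, and surjectivity forces equality with the target), $\psi^r$ is an open surjection sending ZD tuples to ZD tuples. Hence it suffices to prove density of the ZD locus in $(DG\times T)^r$, and by Lemma~\ref{Goursat} that locus is precisely the Cartesian product of the ZD loci in $DG^r$ and in $T^r$.

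For the torus $T\isom(\C^*)^d$: algebraic subgroups of $T$ are dual to subgroups of the character lattice $X^*(T)\isom\Z^d$ and hence form a countable set, so the non-ZD locus in $T^r$ is the countable union $\bigcup_H H^r$ over proper algebraic subgroups, a countable union of proper Zariski-closed subsets of the Baire space $T^r$, which has empty Euclidean interior. For the semisimple factor $DG^r$ with $r\geqs 2$: the non-ZD locus equals $\bigcup_{[M]}\bigcup_{g\in DG}(gMg^{-1})^r$ as $[M]$ runs over conjugacy classes of maximal proper closed subgroups; Dynkin's classification gives only finitely many such classes, and a fiber-dimension count for the conjugation map $DG\times M^r\to DG^r$ (whose generic fibers have dimension $\dim N_{DG}(M)\geqs \dim M$) bounds each resulting constructible subset by dimension $\dim DG+(r-1)\dim M<r\dim DG$---strict because $\dim M<\dim DG$ and $r\geqs 2$. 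Thus the non-ZD locus is contained in a proper Zariski-closed subvariety of the irreducible variety $DG^r$, so the ZD locus is Zariski-open and dense.

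The main obstacle is the semisimple case---specifically, invoking finiteness of maximal-subgroup conjugacy classes and making the fiber-dimension bound strict. Everything else is essentially formal, relying on Lemma~\ref{Goursat}, Baire category on $T^r$, and the elementary fact that continuous open surjections (and $\pi_G$ itself, via the polystable-orbit identification) preserve density.
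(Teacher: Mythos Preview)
Your proof is correct and shares the paper's overall strategy: reduce to $DG\times T$ via the central isogeny, invoke Lemma~\ref{Goursat} to separate the semisimple and toral factors, and treat each independently. The paper carries this out at the level of the character variety (using the isomorphism $\X_r(DG)\times_{F^r}\X_r(T)\to\X_r(G)$), whereas you work upstairs on $G^r$ and push down through $\pi_G$; that is a harmless presentational difference.

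The substantive divergence is in the semisimple case. The paper simply cites \cite{Gelander} for non-emptiness and \cite[Proposition~8.2]{AcBu} for Zariski openness of $\hom(\F_r,DG)^{zd}$, whereas you give a direct argument: cover the non-ZD locus by the images of the conjugation maps $DG\times M^r\to DG^r$ over maximal proper closed subgroups $M$, use finiteness of their conjugacy classes, and bound each image by a fiber-dimension count. Your route is more self-contained but relies on that finiteness statement, which is true but goes somewhat beyond what is usually called ``Dynkin's classification'' (Dynkin treated maximal \emph{connected} subgroups; one must also account for disconnected maximal closed subgroups, including finite ones such as the binary icosahedral group in $\SL_2(\C)$). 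The dimension bound still goes through in those cases since $\dim M<\dim DG$ regardless. Your Baire-category treatment of the torus is also more explicit than the paper's one-line assertion.
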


\begin{proof}
When $r\geqs 2$ \cite[Lemma 1.8]{Gelander} implies $\hom^{zd}(\F_r,G)\subset \hom(\F_r,G)$ is non-empty.

For our first case, suppose $G$ is semisimple. Then $\hom^{zd}(\F_r,G)$ is Zariski open by \cite[Proposition 8.2]{AcBu} and hence dense since $\hom(\F_r,G)$ is connected.  Let $$\pi_G:\hom(\F_r, G)\to \X_r( G)$$ be the GIT quotient map. Every Zariski dense $\rho$ is irreducible and, hence, a stable point of the action of $G$ by conjugation, by \cite[Corollary 31]{Si4}. By \cite{Do} (Theorem 8.1 and Property (iv) of a good categorical quotient), $\pi_G^{-1}([\rho])$ is the (set-theoretic) $G$-orbit of $\rho$.  Thus, since $\hom(\F_r,G)^{zd}$ is invariant under the conjugation action, we conclude that $\pi_G^{-1}(\pi_G(\hom(\F_r,G)^{zd}))=\hom(\F_r,G)^{zd}$.  Thus, $\X^{zd}_r(G)$ is Zariski open and hence dense in $\X_r(G)$.

Now assume $G$ is connected and reductive. Then $G\cong DG\times_F T$ where $DG=[G,G]$ is semisimple, $T$ is a central algebraic torus, and $F=T\cap DG$ is a finite central subgroup. From this, by \cite[Proposition 5]{Si7} or \cite{BLR}, the inclusions of $DG$ and $T$ into $G$ induce an isomorphism $\varphi: \X_r(DG)\times_{F^r}\X_r (T) \to \X_r(G)$. 

Lemma~\ref{Goursat} implies that $\varphi$ maps the subspace
$\X_r(DG)^{zd} \times_{F^r}\X_r (T)^{zd} \subset \X_r(DG)\times_{F^r}\X_r (T)$ into $\X_r (G)^{zd}$, so to complete the proof it will suffice to show that $\X_r(DG)^{zd} \times_{F^r}\X_r (T)^{zd}$ is Euclidean dense in $\X_r(DG) \times_{F^r} \X_r (T)$, and for this it suffices to show that $\X_r(DG)^{zd}$ and $\X_r (T)^{zd}$ are Euclidean dense in $\X_r(DG)$ and $\X_r (T)$, respectively.

We have seen above that $\X_r(DG)^{zd}$ is non-empty and also Zariski open in $\X_r(DG)$.
Since $\X_r(DG)$ is irreducible, it follows that  $\X_r(DG)^{zd}$ is Euclidean dense in $\X_r(DG)$. Since the set of elements of $T^r$ generating Zariski dense subgroups in $T$ is Euclidean dense, $\X_r(T)^{zd}$ is dense in $\X_r (T) = T^r$, completing the proof.
\end{proof}

\begin{rem}
Proposition \ref{zd-prop} and its proof are a special case of \cite[Lemma 6]{LS-v3}.  Lawton notes that there is a minor mistake in the published version of \cite{LS} that does not change any of the main results and is corrected in the arXiv update \cite{LS-v3}.  Precisely, in Theorem 6, and Corollaries 7, 8, and 10 in \cite{LS}, ``irreducible'' needs to be replaced by ``Zariski-dense.''
\end{rem}

\subsection{Representations: Bad \& Ugly}

Define the locus of {\it bad} representations as the collection of irreducible representations that are not good; that is, whose $G$-stabilizer is strictly larger than the center of $G$. 

The collection of bad representations, denoted $\hom(\F_r,G)^{bad}$, is thus Zariski closed in the irreducible locus (again this follows from \cite[Proposition 1.3]{JM}).

We denote the smooth locus $\mathcal{X}_r(G):=\XC{r}(G)-\XC{r}(G)^{sing}$, where $\XC{r}(G)^{sing}$ is the subvariety of (algebraic) singular points.

\begin{rem}
From \cite{FL2}, if $G$ is $\SL_n(\C)$ or $\GL_n(\C)$ then $$\mathcal{X}_r(G)=\XC{r}(G)^{good}=\XC{r}(G)^{irr}$$ as long as $(r-1)(n-1)\geqs 2$. From \cite{HP}, this result does hold true when $G=\p\SL_2(\C)$.  In Section \ref{Schur}, we show that if $G$ is simple the only time the bad locus is empty is when $G=\SL_n(\C)$, which resolves a conjecture of Sikora \cite{Si4}.
\end{rem}

Define an {\it ugly representation} to be one that is a topological singularity in $\X_r(G)$ with respect to the analytic topology. 

In other words, $\rho$ is ugly if and only if every open set around $[\rho]$ is {\it not} homeomorphic to a Euclidean space. When $G$ is connected we can say more: if $[\rho]$ is not ugly, then it has a neighborhood homeomorphic to $\C^d$, where $d=\dim_\C\X_r(G)$. This follows from  Invariance of Domain together with density of the smooth locus.
 
We note here that: $$\dim_\C\X_r(G)=(r-1)\dim_\C G+\dim_\C Z(G)$$ if $r\geqs 2$, and $\dim_\C\X_1(G)=\mathrm{Rank}(G)$, where the rank of $G$ is the dimension of one of its Cartan subgroups (the centralizer of a maximal torus).  In \cite{FL2}, it is shown when $G$ is $\GL_n(\C)$ or $\SL_n(\C)$ that there exists ugly representations if and only if  $(r-1)(n-1)\geqs 2$; in these cases all ugly representations are in the reducible locus.

\subsection{Summary}
In \cite{Ri}, Richardson showed that the singular locus of $\X_r(G)$ is precisely the union of the bad locus with the reducible locus if $G$ is semisimple, $r\geqs 2$ and the Lie algebra of $G$ does not have any rank 1 simple factors.  In \cite{FLR} this result is extended to connected reductive groups $G$ with the same condition on the simple factors of the Lie algebra of the derived subgroup $DG$.

In Section \ref{singsec}, we will establish that if $r\geqs 3$, then the singular locus coincides with the union of the bad locus and the reducible locus, and we will show that all algebraic singularities are in fact topological singularities (ugly).  The first result extends Richardson's Theorem by removing the local rank condition when $r\geqs 3$. The second is analogous to Mumford's result that all algebraic singularities in a normal complex algebraic surface are ugly.

In Figure \ref{gbrdiagram}, we have organized the previously defined subspaces of representations in a Venn diagram.

\begin{figure}[!h]
\includegraphics[width=\linewidth]{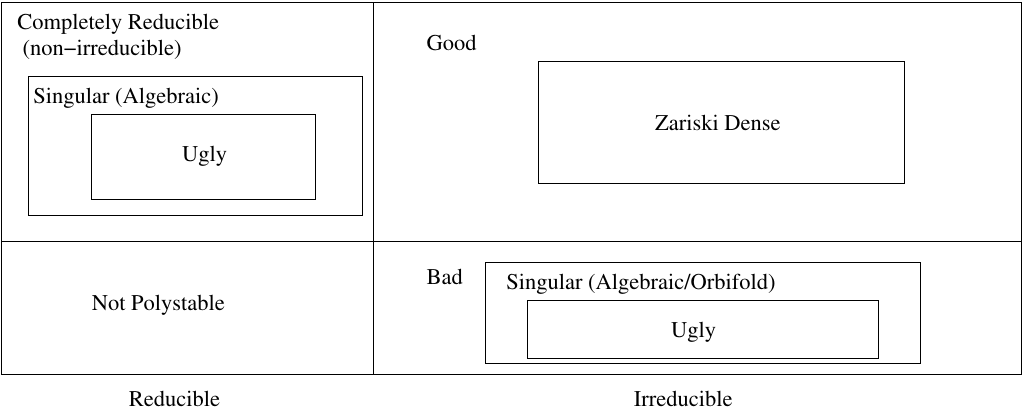}
\caption{Venn Diagram of Representations.}\label{gbrdiagram}
\end{figure}

\section{Bounding the codimension of bad representations}

To prove our main theorems, we need to bound the codimension of the bad locus.  To do so, we need to study the Lie algebras of reductive $\C$-groups.  One can find the following terms, facts and notation in any standard text covering Lie algebras like \cite{OnVi}, or \cite{FH}. 

A complex Lie algebra $\mathfrak{g}$ is {\it reductive} if its radical (largest solvable ideal) is equal to its center.  There are many equivalent formulations. In particular, $\mathfrak{g}$ is a direct sum of its center and a finite number of simple subalgebras (not containing any non-trivial proper ideals).  If $G$ is a reductive $\C$-group, then its Lie algebra, denoted $\mathrm{Lie} (G)$ or $\mathfrak{g}$, is a complex reductive Lie algebra.

A {\it Cartan subalgebra} $\mathfrak{h}\subset \mathfrak{g}$ is a maximal abelian subalgebra such that $\mathrm{ad}_H$ is diagonalizable for each $H\in \mathfrak{h}$.  Given a Cartan subalgebra, a {\it root} (or more specifically, an  $\mathfrak{h}$-root) is a non-zero element $\alpha\in \mathfrak{h}^*$, the dual of $\mathfrak{h}$, for which there exists a non-zero $X\in \mathfrak{g}$ with $\alpha (H)X=[H,X]=\mathrm{ad}_H(X)$ for all $H\in \mathfrak{h}$. We denote the set of roots by $\Delta$. So for each $\alpha\in \Delta$ we have a generalized $\mathrm{ad}_{\mathfrak{h}}$-eigenspace (called the \e{root space})
$$\mathfrak{g}_\alpha = \{X\in \mathfrak{g} :\, \mathrm{ad}_H(X) = \alpha (H) X \textrm{ for all } H\in \mathfrak{h}\}.$$  
It is a standard fact that each $\mathfrak{g}_\alpha$ is one-dimensional (over $\C$), and we have the \e{root space decomposition}
\[\mathfrak{g} = \mathfrak{h}\oplus \bigoplus_{\alpha\in \Delta}\mathfrak{g}_{\alpha}.\]

The following definition, which can be found in \cite{Ruben}, will be important for our purposes.

\begin{defn}
Let $\mathfrak{g}$ be a Lie algebra. We say that a subalgebra $\mathfrak{s}\subset \mathfrak{g}$ is {\bf regular}
 if it contains a Cartan subalgebra  of $\mathfrak{g}$. 
\end{defn}

\begin{lem}\label{regular}
Let $\mathfrak{s}\subset \mathfrak{g}$ be a regular subalgebra of the complex Lie algebra $\mathfrak{g}$, and let $\mathfrak{h}$ be a Cartan subalgebra of $\mathfrak{g}$ that is contained in $\mathfrak{s}$. Then 
\[\mathfrak{s} = \mathfrak{h}\oplus \bigoplus_{\alpha\in \Delta'}\mathfrak{g}_{\alpha},\]
where $ \Delta' \subset \Delta$ is the set of $\mathfrak{h}$--roots satisfying 
$\mathfrak{g}_\alpha\cap \mathfrak{s}\neq 0$.
\end{lem}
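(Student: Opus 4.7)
The plan is to exploit the fact that $\mathfrak{s}$, since it contains the Cartan subalgebra $\mathfrak{h}$, is automatically stable under the adjoint action of $\mathfrak{h}$, and then to apply simultaneous diagonalization of the commuting family $\{\mathrm{ad}_H : H\in \mathfrak{h}\}$ restricted to $\mathfrak{s}$.

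First I would observe that because $\mathfrak{h}\subset \mathfrak{s}$ and $\mathfrak{s}$ is a subalgebra, the bracket $[H,X]=\mathrm{ad}_H(X)$ lies in $\mathfrak{s}$ for every $H\in \mathfrak{h}$ and $X\in \mathfrak{s}$; hence $\mathfrak{s}$ is an $\mathrm{ad}_\mathfrak{h}$-invariant subspace of $\mathfrak{g}$. Next, the collection $\{\mathrm{ad}_H : H\in \mathfrak{h}\}$ is a commuting family of diagonalizable endomorphisms of $\mathfrak{g}$ (commuting because $\mathfrak{h}$ is abelian, diagonalizable by the definition of a Cartan subalgebra recalled in the preamble), and its common eigenspace decomposition is exactly the root space decomposition $\mathfrak{g} = \mathfrak{h}\oplus \bigoplus_{\alpha\in \Delta}\mathfrak{g}_\alpha$, with $\mathfrak{h}$ appearing as the zero-weight piece.

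Now I would invoke the standard linear-algebra fact that an invariant subspace under a commuting family of diagonalizable operators inherits the simultaneous eigenspace decomposition. Applying this to $\mathfrak{s}$ gives
\[\mathfrak{s} = (\mathfrak{s}\cap \mathfrak{h})\oplus \bigoplus_{\alpha\in \Delta}\bigl(\mathfrak{s}\cap \mathfrak{g}_\alpha\bigr).\]
Since $\mathfrak{h}\subset \mathfrak{s}$, we have $\mathfrak{s}\cap \mathfrak{h}=\mathfrak{h}$. Since each root space $\mathfrak{g}_\alpha$ is one-dimensional, each intersection $\mathfrak{s}\cap \mathfrak{g}_\alpha$ is either $0$ or all of $\mathfrak{g}_\alpha$. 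Defining $\Delta'=\{\alpha\in \Delta : \mathfrak{s}\cap \mathfrak{g}_\alpha\neq 0\}$ then yields the asserted decomposition.

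There is essentially no obstacle here; the only point worth stating carefully is the inheritance of simultaneous diagonalization by an invariant subspace, which follows because each $\mathrm{ad}_H|_\mathfrak{s}$ remains diagonalizable (diagonalizability passes to invariant subspaces, e.g.\ since the minimal polynomial still splits with distinct roots) and the restricted operators still commute, so their common eigenspaces in $\mathfrak{s}$ are precisely the intersections of $\mathfrak{s}$ with the common eigenspaces in $\mathfrak{g}$.
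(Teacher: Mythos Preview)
Your proof is correct. The approach differs from the paper's: the paper proceeds by an explicit induction on the number of nonzero root components of an element $X\in\mathfrak{s}$, bracketing with carefully chosen $H\in\mathfrak{h}$ to peel off one root component at a time (treating separately the case where the surviving roots span a line, so that $\alpha$ and $-\alpha$ must be separated). Your argument instead invokes the general linear-algebra principle that an invariant subspace under a commuting family of diagonalizable operators inherits the simultaneous eigenspace decomposition, and then reads off the result immediately. Your route is more conceptual and shorter; the paper's is self-contained in the sense that it effectively reproves that inheritance principle by hand in this particular situation rather than citing it.
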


Note that since $\dim_\C \mathfrak{g}_\alpha = 1$, the condition $\mathfrak{g}_\alpha\cap \mathfrak{s}\neq 0$ is equivalent to $ \mathfrak{g}_\alpha \subset \mathfrak{s}$.

\begin{proof} Since $\mathfrak{h} \subset \mathfrak{s}$ and each element of $\mathfrak{g}$ can be written in the form $X = H + \sum_{\alpha\in \Delta} X_\alpha$ (with $H\in \mathfrak{h}$ and $X_\alpha\in \mathfrak{g}_\alpha$), it suffices to check that whenever such an element $X$ lies in $ \mathfrak{s}$, each term $X_\alpha$ actually lies in  $\mathfrak{s}$. Moreover, since $\mathfrak{h}\subset\mathfrak{s}$, it suffices to prove   the statement: if $X = \sum_{\alpha\in \Delta} X_\alpha \in \mathfrak{s}$, for some $X_\alpha \in \mathfrak{g}_\alpha$, then $X_\alpha\in \mathfrak{s}$ for each $\alpha$. 
For any such vector $X$, let $n(X)$ denote the number of non-zero terms $X_\alpha$. 
We will prove the statement by induction on $n(X)$.

For the base case, $n(X) = 1$, there is nothing to prove. 
For the induction step, let $\{\alpha_1, \ldots, \alpha_k\}$ be a basis for $\mathrm{Span}_\C \{\alpha:\, X_\alpha\neq 0\}$.

If $k=1$, then writing $\alpha = \alpha_1$, either $X = X_{\alpha}$ and there is nothing to prove, or else $-\alpha \in \Delta$ and $X = X_{\alpha} + X_{-\alpha}$ for some $X_{-\alpha} \in \mathfrak{g}_{-\alpha}$. In this case, choose $H\in \mathfrak{h}$ with $\alpha(H)\neq 0$. Then
$$[H, X] = \alpha(H) X_\alpha - \alpha(H) X_{-\alpha} = \alpha(H) (X_\alpha - X_{-\alpha}) \in 
\mathfrak{s},$$
so $X_\alpha - X_{-\alpha} \in \mathfrak{s}$ as well, and after adding $X$ we see that $X_\alpha, X_{-\alpha} \in  \mathfrak{s}$, as desired.

If $k\geqs 2$,  
choose an element $H_1\in \mathfrak{h}$ such that $\alpha_i (H_1)$ is non-zero if and only if $i=1$. Then setting $X' = X - \sum_i X_{\alpha_i}$, we have
$$[H_1, X] = \alpha_1 (H_1) X_{\alpha_1} + [H_1, X'] \in \mathfrak{s}.$$
We can apply our induction hypothesis to the element $\alpha_1 (H_1) X_{\alpha_1} + [H_1, X']$, so we conclude that $X_{\alpha_1} \in \mathfrak{s}$. Applying the induction hypothesis to $X - X_{\alpha_1}$, we find that all the remaining terms in $X$ also lie in $\mathfrak{s}$.

\end{proof}

\begin{lem}\label{regular2}
Let $\mathfrak{z}, \mathfrak{g}_1, \ldots, \mathfrak{g}_n$ be complex Lie algebras, with $\mathfrak{z}$ abelian and $\mathfrak{g}_i$ simple, and let $\mathfrak{g} =\mathfrak{z} \oplus \mathfrak{g}_1\oplus \cdots \oplus \mathfrak{g}_n$. Then every regular subalgebra of  $\mathfrak{g}$ is conjugate to one of the form $\mathfrak{z}  \oplus \mathfrak{s}_1\oplus \cdots \oplus \mathfrak{s}_n$, where $\mathfrak{s}_i$ is a regular subalgebra of $\mathfrak{g}_i$.
\end{lem}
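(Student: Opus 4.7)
The plan is to reduce the problem to Lemma \ref{regular} once we have placed the Cartan subalgebra inside $\mathfrak{s}$ in a standard form compatible with the direct sum decomposition of $\mathfrak{g}$.

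First, I would observe that every Cartan subalgebra of $\mathfrak{g} = \mathfrak{z}\oplus \mathfrak{g}_1\oplus \cdots \oplus \mathfrak{g}_n$ has the form $\mathfrak{h} = \mathfrak{z}\oplus \mathfrak{h}_1 \oplus \cdots \oplus \mathfrak{h}_n$ with $\mathfrak{h}_i$ a Cartan subalgebra of $\mathfrak{g}_i$. Indeed, since $\mathfrak{z}$ is central, $\mathfrak{z}$ is contained in every Cartan subalgebra, and the maximality together with ad-diagonalizability forces each projection $\mathfrak{h}\cap \mathfrak{g}_i$ to be Cartan in the simple factor $\mathfrak{g}_i$. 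Moreover, all such Cartans are conjugate in $\mathfrak{g}$: the inner automorphism group of $\mathfrak{g}$ equals the product $\text{Inn}(\mathfrak{g}_1)\times \cdots \times \text{Inn}(\mathfrak{g}_n)$ (the abelian summand $\mathfrak{z}$ contributes only trivial inner automorphisms), and this product acts transitively on Cartan subalgebras via the conjugacy of Cartan subalgebras in each simple factor. Crucially, this group preserves the direct sum decomposition $\mathfrak{z}\oplus \mathfrak{g}_1\oplus \cdots \oplus \mathfrak{g}_n$.

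Next, given a regular subalgebra $\mathfrak{s}\subset \mathfrak{g}$, let $\mathfrak{h}'\subset \mathfrak{s}$ be a Cartan subalgebra of $\mathfrak{g}$ contained in $\mathfrak{s}$. By the previous paragraph I can conjugate $\mathfrak{s}$ by an element of the inner automorphism group so that $\mathfrak{h}'$ becomes the standard Cartan $\mathfrak{h} = \mathfrak{z}\oplus \mathfrak{h}_1\oplus \cdots \oplus \mathfrak{h}_n$; this conjugation preserves the direct sum decomposition of $\mathfrak{g}$, so it is harmless. After this reduction, I can apply Lemma \ref{regular} to write
\[\mathfrak{s} = \mathfrak{h} \oplus \bigoplus_{\alpha \in \Delta'}\mathfrak{g}_\alpha\]
for some subset $\Delta'$ of the $\mathfrak{h}$-root system $\Delta$ of $\mathfrak{g}$.

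The final step is to observe that the root system of $\mathfrak{g}$ with respect to $\mathfrak{h}$ decomposes as a disjoint union $\Delta = \Delta_1\sqcup \cdots \sqcup \Delta_n$, where $\Delta_i$ consists of the roots of $\mathfrak{g}_i$ with respect to $\mathfrak{h}_i$ (extended by zero on $\mathfrak{z}$ and on the other $\mathfrak{h}_j$); for $\alpha\in \Delta_i$ the root space $\mathfrak{g}_\alpha$ is contained in $\mathfrak{g}_i$. Setting $\Delta'_i = \Delta'\cap \Delta_i$ and
\[\mathfrak{s}_i = \mathfrak{h}_i \oplus \bigoplus_{\alpha \in \Delta'_i}\mathfrak{g}_\alpha,\]
Lemma \ref{regular} shows that each $\mathfrak{s}_i$ is a regular subalgebra of $\mathfrak{g}_i$, and
\[\mathfrak{s} = \mathfrak{z}\oplus \mathfrak{s}_1\oplus \cdots \oplus \mathfrak{s}_n,\]
giving the claimed form. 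The main (mild) obstacle is the conjugacy step in the first paragraph, which requires knowing that Cartan subalgebras of the reductive algebra $\mathfrak{g}$ are conjugate under an inner automorphism preserving the direct sum decomposition; this is where the simplicity of each $\mathfrak{g}_i$ is used, through the fact that $\text{Inn}(\mathfrak{g})$ factors along the decomposition.
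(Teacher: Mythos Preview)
Your proof is correct and follows essentially the same route as the paper: both reduce by conjugacy of Cartans to the case $\mathfrak{h}=\mathfrak{z}\oplus\mathfrak{h}_1\oplus\cdots\oplus\mathfrak{h}_n\subset\mathfrak{s}$, apply Lemma~\ref{regular}, and then observe that each root space $\mathfrak{g}_\alpha$ sits inside a single simple factor $\mathfrak{g}_i$. Your extra care in arguing that the conjugating inner automorphism preserves the direct-sum decomposition is harmless but not actually needed---the statement only asks that $\mathfrak{s}$ be \emph{conjugate} to something of the stated form, so any conjugation carrying the given Cartan to the standard one suffices.
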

\begin{proof} Letting $\mathfrak{h}_i$ be a Cartan subalgebra of $\mathfrak{g}_i$. Then 
$\mathfrak{h}:=\mathfrak{z} \oplus \mathfrak{h}_1\oplus \cdots \oplus \mathfrak{h}_n$ is a Cartan subalgebra of $\mathfrak{g}$, and since all Cartan subalgebras of $\mathfrak{g}$ are conjugate, it suffices to show that every subalgebra of $\mathfrak{g}$ containing $\mathfrak{h}$ has the desired form. 

So, we now  assume that $\mathfrak{s}$ contains $\mathfrak{h}$. 
Let $\Delta$ be the set of  $\mathfrak{h}$--roots of $\mathfrak{g}$.
By Lemma~\ref{regular},  there exists $\Delta' \subset \Delta$ such that  
\begin{equation}\label{s-eq}\mathfrak{s} = \mathfrak{h}\oplus \bigoplus_{\alpha\in \Delta'}\mathfrak{g}_{\alpha}\end{equation}

We claim that each of the generalized eigenspaces $\mathfrak{g}_{\alpha}$, $\alpha\in \Delta$, is in fact 
contained in $\mathfrak{g}_i$ for some $i$. 
 Let $\pi_i \co \mathfrak{h}\to \mathfrak{h}_i$ and $j_i\co \mathfrak{g}_i\injects \mathfrak{g}$ be the projection and inclusion maps, respectively, for the $i$th factors, and let $\Delta_i$ be the set of $\mathfrak{h}_i$--roots of $\mathfrak{g}_i$. Computation shows that $\pi_i^* (\Delta_i) \in \Delta$ -- more specifically,  for all $\alpha_i\in \Delta_i$ we have $j_i \left((\mathfrak{g}_i)_{\alpha_i}\right) \subset \mathfrak{g}_{\pi_i^* \alpha}$, and in fact equality holds since these root spaces are one-dimensional. Comparing the root space decompositions of $\mathfrak{g}$ and of the $\mathfrak{g}_i$'s, we see that $\Delta = \bigcup_i (\pi_i^* (\Delta_i))$. Since $\mathfrak{g}_{\pi_i^* \alpha} = j_i \left((\mathfrak{g}_i)_{\alpha_i}\right)$, this proves the claim.

It follows that in the decomposition (\ref{s-eq}), each summand (apart from $\mathfrak{h}$) is contained in $j_i (\mathfrak{g}_i)$ for some $i$. Letting $\mathfrak{s}_i$ be the direct sum of $\mathfrak{h}_i$ with those summands contained in $j_i (\mathfrak{g}_i)$, we obtain the desired decomposition of $\mathfrak{s}$.
\end{proof}

We note, as the reader may be wondering, that regular elements in a Lie algebra (elements with minimal dimensional centralizers) are not directly related to the regular subalgebras as defined above.  However, a key example of a regular subalgebra is the centralizer of a semisimple element (an element that is diagonalizable with respect to a finite dimensional representation of $G$).

\begin{lem}\label{semisimplecentralizers}  Let $G$ be a connected, reductive $\C$-group with Lie algebra $\mathfrak{g}$. If $g\in G$ is a non-central semisimple element, then the centralizer $$\mathfrak{z}_{\mathfrak{g}}(g):=\{X\in\mathfrak{g}\ |\ \mathrm{Ad}_g(X)=X\}$$ is a regular reductive subalgebra of $\mathfrak{g}$.
\end{lem}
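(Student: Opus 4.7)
My plan is to place $g$ inside a maximal torus of $G$ and then read off the fixed points of $\Ad_g$ from the root space decomposition, deducing regularity directly and reductivity via the standard structure theorem for centralizers. Since $g$ is a semisimple element of the connected reductive $\C$-group $G$, a standard theorem of structure theory guarantees that $g$ lies in some maximal torus $T\subset G$. Set $\mathfrak{h} := \mathrm{Lie}(T)$; this is a Cartan subalgebra of $\mathfrak{g}$. Because $T$ is abelian and $g\in T$, the operator $\Ad_g$ acts as the identity on $\mathfrak{h}$, so $\mathfrak{h}\subset \mathfrak{z}_\mathfrak{g}(g)$, which immediately establishes regularity of $\mathfrak{z}_\mathfrak{g}(g)$.

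For a more detailed description, decompose $\mathfrak{g} = \mathfrak{h}\oplus \bigoplus_{\alpha\in\Delta}\mathfrak{g}_\alpha$. Each root $\alpha$ lifts to a multiplicative character $\chi_\alpha\co T\to \C^*$, and since each $\mathfrak{g}_\alpha$ is one-dimensional, $\Ad_g$ acts on $\mathfrak{g}_\alpha$ by the scalar $\chi_\alpha(g)$. Thus
$$\mathfrak{z}_\mathfrak{g}(g) = \mathfrak{h}\oplus \bigoplus_{\alpha\in\Delta'}\mathfrak{g}_\alpha, \qquad \Delta' := \{\alpha\in \Delta: \chi_\alpha(g)=1\},$$
matching exactly the form given by Lemma \ref{regular}. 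The set $\Delta'$ is closed under negation since $\chi_{-\alpha} = \chi_\alpha^{-1}$, which will be the key combinatorial ingredient for reductivity.

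For reductivity, I would invoke the classical fact that the centralizer $C_G(g)$ of a semisimple element in a connected reductive algebraic group is itself reductive (see, e.g., \cite{Borel}); then $\mathfrak{z}_\mathfrak{g}(g) = \mathrm{Lie}(C_G(g))$ is reductive. A hands-on alternative is to observe that the symmetry of $\Delta'$ makes the restriction of the Killing form of $\mathfrak{g}$ to $\mathfrak{z}_\mathfrak{g}(g)$ nondegenerate (the pairing of $\mathfrak{g}_\alpha$ with $\mathfrak{g}_{-\alpha}$ survives the restriction, as does the nondegenerate pairing on $\mathfrak{h}$), which implies reductivity. The main—and rather mild—obstacle is this reductivity step; the regularity claim is automatic once $g$ is placed in a maximal torus. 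I note that the non-centrality hypothesis plays no role in either conclusion and presumably appears in the statement only so that downstream applications may treat $\mathfrak{z}_\mathfrak{g}(g)$ as a \emph{proper} regular reductive subalgebra.
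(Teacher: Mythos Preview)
Your proposal is correct and follows essentially the same path as the paper: place $g$ in a maximal torus, read off the root-space decomposition of the centralizer, and deduce reductivity from the symmetry $\Delta'=-\Delta'$ (the paper carries out this last step directly rather than citing \cite{Borel} or invoking a bilinear form). One caveat on your ``hands-on alternative'': the Killing form of a reductive but non-semisimple $\mathfrak{g}$ is degenerate on the center, so its restriction to $\mathfrak{h}$ is not nondegenerate as you claim; to make that route work you would need an $\mathrm{Ad}$-invariant nondegenerate form, or simply pass to $[\mathfrak{g},\mathfrak{g}]$ first.
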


\begin{proof}
A Cartan subgroup of $G$ is a subgroup such that its Lie algebra is Cartan.  Since $g$ is semisimple, there exists a Cartan subgroup containing $g$.  Now let $\mathfrak{h}$ be the corresponding Cartan subalgebra, and since $\mathfrak{g}$ is reductive, we know $\mathfrak{g}=\mathfrak{h}\oplus \bigoplus_{\alpha\in \Delta}\mathfrak{g}_{\alpha}$ where $\Delta$ is the set of roots with respect to $\mathfrak{h}$.  Since the derivative of $\mathrm{Ad}$ is $\mathrm{ad}$, we have $\mathfrak{h}\subset \mathfrak{z}_{\mathfrak{g}}(g)$.
Now let $\Delta'\subset \Delta$ be the $\mathfrak{h}$-roots $\alpha$ such that $\mathfrak{z}_{\mathfrak{g}}(g)\cap \mathfrak{g}_\alpha\not=0$.  
Then by Lemma~\ref{regular}, 

$$\mathfrak{z}_{\mathfrak{g}}(g)=\mathfrak{h}\oplus \bigoplus_{\alpha\in \Delta'}\mathfrak{g}_{\alpha}.$$

Since the center of $\mathfrak{g}$ is contained in $\mathfrak{h}$ and $\mathfrak{z}_{\mathfrak{g}}(g)$ contains $\mathfrak{h}$, the center of $\mathfrak{z}_{\mathfrak{g}}(g)$ is equal to the center of $\mathfrak{g}$.  So to show $\mathfrak{z}_{\mathfrak{g}}(g)$ is reductive it suffices to show that its quotient by the center of $\mathfrak{g}$ is a semisimple Lie algebra.  This latter fact is equivalent to $\Delta'$ being stable by multiplication by $-1$.  
For each $\alpha\in \Delta\cup \{0\} \subset \mathfrak{h}^*$, set $\lambda_\alpha = e^{\alpha (H_0)}$.  Then $\mathrm{Ad}_g(X)=\lambda_\alpha X$, and $\lambda_{\alpha+\beta}=\lambda_\alpha\lambda_\beta$.
Moreover, $\lambda_0 = 1$, and hence for each $\alpha \in \Delta$ we have
 $1=\lambda_0=\lambda_\alpha\lambda_{-\alpha}$.  Now, $\alpha\in \Delta'$ if and only if $\lambda_\alpha=1$, so we conclude that if $\alpha\in \Delta'$ then $-\alpha\in \Delta'$ too.  Thus, $\Delta'$ is stable under multiplication by $-1$ and so $\mathfrak{z}_{\mathfrak{g}}(g)$ is reductive.
\end{proof}

We will call a subgroup of $G$ {\it bad} if it is not contained in a parabolic subgroup and its centralizer is not equal to $Z(G)$ (see \cite{Gu1}, for example).  Thus, a $G$-representation of $\F_r$ is bad if and only if its image is a bad subgroup.

To understand bad subgroups we will need to understand {\it maximal} regular subalgebras.  Such a subalgebra  is either parabolic (contains a Borel subalgebra, i.e. a maximal solvable subalgebra) or reductive.  The latter case leads to the following definition (see \cite{Ruben, BdS}). 

\begin{defn}\label{def:BdS}
Let $G$ be a reductive $\C$-group and let $\mathfrak{g}$ be a reductive Lie algebra. We say that a reductive subalgebra $\mathfrak{s}$ of $\mathfrak{g}$ is a {\bf Borel-de Siebenthal} subalgebra if it is proper, has the same rank as $\mathfrak{g}$ and $\mathfrak{s}/\mathfrak{z}(\mathfrak{g})$ is semisimple.\footnote{Note that every subalgebra of full rank must contain the center.}  A subgroup of $G$ is said to be a {\bf Borel-de Siebenthal subgroup} of $G$ if it is connected and its Lie algebra is a Borel-de Siebenthal subalgebra of $\mathrm{Lie} (G)$. 
\end{defn}

One easily shows that every Borel-de Siebenthal subalgebra is regular.   When $\mathfrak{g}$ is semisimple, a Borel-de Siebenthal subalgebra is a semisimple proper subalgebra of $\mathfrak{g}$ of maximal rank.  In the sequel we will write {\it BdS} to abbreviate ``Borel-de Siebenthal.''

\begin{exam}
The subalgebra $\mathfrak{so}_{2n}(\mathbb{C})$ inside $\mathfrak{so}_{2n+1}(\mathbb{C})$ is a BdS subalgebra as they both have rank $n$.  The subalgebras $\mathfrak{sp}_{2k}(\mathbb{C})\times \mathfrak{sp}_{2n-2k}(\mathbb{C})$ inside $\mathfrak{sp}_{2n}(\mathbb{C})$ are  BdS subalgebras for $1\leqs k\leqs n-1$ since the rank of the former is $k+(n-k)=n$ which is the rank of the latter. 
\end{exam}

\begin{rem}
Parabolic subalgebras of $\mathfrak{g}$, up to conjugation, are in bijection with subsets of nodes of the Dynkin diagram of $\mathfrak{g}$, see \cite{Borel}.  So maximal parabolic subalgebras are classified up to conjugation by removing a single vertex from the Dynkin diagram of the original Lie algebra.  While it is not trivial, it is also possible to classify BdS subalgebras up to conjugation. Using the isomorphism \[\mathfrak{z}(\mathfrak{g})\times [\mathfrak{g},\mathfrak{g}]\to \mathfrak{g}\] and the fact that semisimple algebras are direct sums of simple ones, it suffices to do it in the simple case. Then, the classification comes down to classifying sub-root systems of the root system of $\mathfrak{g}$ which have the same rank as $\mathfrak{g}$. Such an analysis leads to Table \ref{BdS} of maximal BdS subalgebras. 
\end{rem}

For $\rho:\F_r\to G$ we let $Z_G(\rho)$ be the centralizer of the image of $\rho$.  In these terms, $\rho:\F_r\to G$ is a bad representation if and only if $\rho$ is irreducible and $Z_G(\rho)/Z(G)$ is not trivial. 

\begin{prop}\label{commsemi}
Let $\rho$ be an irreducible representation, then $Z_G(\rho)/Z(G)$  is finite.  Consequently, if $g\in G$ commutes with an irreducible representation it must be semisimple.
\end{prop}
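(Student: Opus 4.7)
The plan is to deduce the first assertion from Richardson's Theorem~4.1 quoted in the excerpt, which identifies irreducibility with stability of $\rho$. First I would identify the ineffective kernel $Z := \bigcap_{\sigma \in \hom(\F_r,G)} G_\sigma$ of the conjugation action with the center $Z(G)$: any element commuting with every representation must in particular commute with every $\sigma$ that sends a single generator to an arbitrary element of $G$, so it lies in $Z(G)$, and the reverse containment is trivial. The stability definition applied to the irreducible (hence stable) $\rho$ then immediately yields that $Z_G(\rho)/Z(G) = G_\rho/Z$ is finite.

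For the consequence, I plan to argue as follows. The centralizer $Z_G(\rho)$ is a Zariski closed subgroup of the algebraic group $G$, so it has finitely many components; combined with $Z_G(\rho)/Z(G)$ finite and $Z(G) \subseteq Z_G(\rho)$, a dimension count forces $Z_G(\rho)^0 = Z(G)^0$. Because $G$ is connected reductive, the identity component $Z(G)^0$ of its center is an algebraic torus, so every element in the identity component of $Z_G(\rho)$ is automatically semisimple.

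Now given $g \in Z_G(\rho)$, some power $g^n$ lies in $Z_G(\rho)^0 = Z(G)^0$, hence $g^n$ is semisimple. Writing the multiplicative Jordan decomposition $g = g_s g_u$ (with commuting semisimple $g_s$ and unipotent $g_u$), the expression $g^n = g_s^n g_u^n$ is the Jordan decomposition of $g^n$; semisimplicity of $g^n$ forces $g_u^n = e$, and since $g_u$ is unipotent this implies $g_u = e$. Thus $g = g_s$ is semisimple, as claimed.

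The only real (and minor) technical point is verifying that Richardson's ineffective kernel $Z$ coincides with $Z(G)$ for the conjugation action on $G^r$; everything else reduces to standard facts about algebraic centers of connected reductive groups and Jordan decomposition.
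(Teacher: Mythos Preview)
Your argument is correct. For the finiteness of $Z_G(\rho)/Z(G)$, the paper simply cites \cite[Corollary~17]{Si4}, whereas you unpack the claim directly from the definition of stability (already quoted in the paper) together with Richardson's characterization $\text{irreducible} \Leftrightarrow \text{stable}$; your identification $Z=Z(G)$ is the only thing to check, and you handle it correctly. So this part is essentially the same idea, just made self-contained.

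For the semisimplicity consequence the two arguments genuinely diverge. The paper uses the structure $Z(G)\cong(\C^*)^k\times A$ and the divisibility of $(\C^*)^k$ to write $g^n=s^n a$, so that $g=(gs^{-1})\cdot s$ is a product of a finite-order (hence semisimple) element with a commuting central torus element. You instead observe $Z_G(\rho)^0=Z(G)^0$ is a torus, take $g^n$ semisimple, and kill the unipotent part of the Jordan decomposition via $g_u^n=e\Rightarrow g_u=e$. Your route is arguably cleaner: it avoids the explicit $n$-th root extraction and appeals only to uniqueness of Jordan decomposition and the elementary fact that a unipotent element of finite order is trivial. The paper's route, on the other hand, gives a bit more: it exhibits $g$ concretely as a central element times an element of finite order, which is occasionally useful elsewhere.
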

\begin{proof}
Corollary 17 in \cite{Si4} says $Z_G(\rho)/Z(G)$ is finite in slightly different language.  Thus, if $g\in G$ commutes with $\rho$ then, $gZ(G)$ has finite order in $Z_G(\rho)/Z(G)$, and thus there exists $n$ so $g^n$ is central.  Since $Z(G) \isom (\mathbb{C}^*)^k \cross A$ for some finite abelian group $A$, we can write $g^n = s^n a$ for some $s\in (\mathbb{C}^*)^k$, $a\in A$, and now $g = (gs^{-1}) s$ is a product of a finite order (hence semisimple) element with a central element.  Thus, $g$ is semisimple.
\end{proof}

Proposition \ref{commsemi} and Lemma \ref{semisimplecentralizers} together allow us to prove the following characterization of bad representations.

\begin{prop}\label{LBDS}
Let $G$ be a connected, reductive $\C$-group. Then the image of a bad representation $\rho:\F_r\to G$ is contained in the normalizer of some Levi subgroup of a proper parabolic subgroup or of some BdS subgroup of $G$.
\end{prop}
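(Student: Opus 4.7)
The strategy is to extract the required subgroup directly from a single non-central element of $Z_G(\rho)$ and analyze its identity component.

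Since $\rho$ is bad, $Z_G(\rho)/Z(G)$ is non-trivial, so I would begin by choosing some $g \in Z_G(\rho)\setminus Z(G)$. By Proposition~\ref{commsemi}, such a $g$ must be semisimple, which puts us in the setting of Lemma~\ref{semisimplecentralizers}: the subalgebra $\mathfrak{z}_{\mathfrak{g}}(g)\subset \mathfrak{g}$ is regular and reductive. Setting $H := Z_G(g)^0$, I get a proper (since $g\notin Z(G)$), connected, reductive subgroup whose Lie algebra contains a Cartan subalgebra of $\mathfrak{g}$. Because $\rho(\F_r)\subset Z_G(g)$ and $H$ is normal in $Z_G(g)$ (as its identity component), this immediately gives $\rho(\F_r)\subset N_G(H)$.

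Next, I would split the argument according to whether $Z(H)^0=Z(G)^0$. If equality holds, writing the reductive decomposition $\mathrm{Lie}(H)=\mathfrak{z}(\mathrm{Lie}(H))\oplus [\mathrm{Lie}(H),\mathrm{Lie}(H)]$ shows $\mathrm{Lie}(H)/\mathfrak{z}(\mathfrak{g})$ is semisimple; combined with the fact that $H$ is proper and has full rank, this verifies Definition~\ref{def:BdS}, so $H$ is a BdS subgroup of $G$, and the inclusion $\rho(\F_r)\subset N_G(H)$ is exactly the conclusion we want in this case.

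Otherwise $S:=Z(H)^0$ is a torus strictly containing $Z(G)^0$; set $L:=Z_G(S)$. I would invoke the standard fact that the centralizer of a torus in a connected reductive group is a connected Levi subgroup of a parabolic. Since $S$ is connected and not contained in $Z(G)^0$, it is not contained in $Z(G)$, so $L\neq G$, making $L$ a Levi of a proper parabolic. Any element that normalizes $H$ must also preserve $Z(H)^0=S$ and therefore preserve $Z_G(S)=L$, yielding $\rho(\F_r)\subset N_G(H)\subset N_G(L)$.

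The main obstacle is the correct bookkeeping at the Lie-algebra level, namely recognizing that $\mathrm{Lie}(H)$ being regular, reductive, and having the same center as $\mathfrak{g}$ forces the Borel--de Siebenthal property; the other ingredient, the identification of torus centralizers with Levi subgroups of parabolics, is standard but must be cited with care, and the tracking of successive normalizers from $H$ to $S$ to $L=Z_G(S)$ needs to be spelled out explicitly.
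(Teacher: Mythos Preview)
Your proposal is correct and follows essentially the same route as the paper's proof: pick a non-central semisimple $g\in Z_G(\rho)$, set $H=Z_G(g)^0$, observe $\rho(\F_r)\subset N_G(H)$, and then split into the BdS case versus the case where $Z(H)^0$ is a larger torus whose centralizer is a proper Levi. The only cosmetic difference is that the paper phrases the dichotomy as ``$\mathrm{Lie}(H)/\mathfrak{z}(\mathfrak{g})$ semisimple or not'' rather than ``$Z(H)^0=Z(G)^0$ or not,'' but as you note these are equivalent for the reductive subalgebra $\mathrm{Lie}(H)$.
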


\begin{proof}
Let $B=\rho(\F_r)$ be a bad subgroup of $G$.  Since $B$ is bad, there exists a non-central element $\xi$ commuting with every element of $B$. Proposition \ref{commsemi} implies that $\xi$ is semisimple. 

Let $Z:=Z_G(\xi)^0$ be the identity component of $Z_G(\xi)=\{g\in G\ |\ g\xi g^{-1}=\xi\}$.  Let $N$ be the normalizer of $Z$. Then since $bzb^{-1}\xi=\xi bzb^{-1}$ for all $z\in Z$ and all $b\in B$, and conjugation preserves identity components, we conclude $B\leq N$.

By Lemma \ref{semisimplecentralizers}, the Lie algebra  $\mathfrak{z}$  of $Z$ is regular and reductive. If $\mathfrak{z}/\mathfrak{z}(\mathfrak{g})$ is semisimple then $Z$ is a BdS subgroup by definition and we are done. 

Otherwise, let $\mathfrak{t}$ be the inverse image in $Z$ of the radical of $\mathfrak{z}/\mathfrak{z}(\mathfrak{g})$. We claim that  $\mathfrak{t}$ is abelian. Indeed, since  $\mathfrak{z}$ is reductive, so is $\mathfrak{z}/\mathfrak{z}(\mathfrak{g})$, and hence the radical $\mathfrak{t}/\mathfrak{z}(\mathfrak{g})$ is abelian; but then  
$\mathfrak{t} \subset \mathfrak{z}$ is a solvable ideal in $\mathfrak{z}$, so it too is abelian.
Now $\mathfrak{t}$ 
 exponentiates to a torus (i.e. a connected, semisimple, abelian group) in $Z$ which is central in $Z$ but not in $G$. In particular,  $L=Z_G(Z(Z)^0)$ is proper. Since $B$ normalizes $Z$, $B$ normalizes $L$. Since $Z(Z)^0$ is a torus, its centralizer in $G$ is a Levi subgroup of a  parabolic subgroup of $G$, which must be proper since $L$ is proper.
\end{proof}

Recall that a complex Lie algebra $\mathfrak{g}$ is the semi-direct product of its radical and a semisimple subalgebra called a {\it Levi} subalgebra.   Proposition \ref{LBDS} highlights a dichotomy  that motivates the following definition.

\begin{defn}\label{types}
Let $G$ be a reductive $\C$-group, $\mathfrak{g}$ its Lie algebra, and $\rho:\F_r \to G$ a bad representation. We say that $\rho$ is: 
\begin{enumerate}
\item[] {\bf Type 1} if $\rho(\F_r)$ normalizes a Levi subalgebra of a proper parabolic subalgebra, and 
\item[] {\bf Type 2} if $\rho(\F_r)$ normalizes a BdS subalgebra. 
\end{enumerate}
\end{defn}

There is nothing preventing a bad representation to be both of Type 1 and Type 2. When $G$ is simply connected, only bad representations of Type 2 may arise (see Section \ref{Schur} for details). 

For a subalgebra $\mathfrak{s}$ of a Lie algebra $\mathfrak{g}$, let $\mathfrak{n}_{\mathfrak{g}}(\mathfrak{s}):=\{X\in \mathfrak{g}\ |\ [X,Y]\in \mathfrak{s},\text{ for all }Y\in \mathfrak{s}\}$ be the normalizer of $\mathfrak{s}$.

\begin{lem}\label{liealgnorm}
Let $\mathfrak{g}$ be a reductive group and $\mathfrak{s}$ be a regular subalgebra of $\mathfrak{g}$. Then 
\[\mathfrak{n}_{\mathfrak{g}}(\mathfrak{s})=\mathfrak{s}.\]
\end{lem}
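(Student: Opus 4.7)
The plan is to exploit the root-space structure from Lemma \ref{regular}. Since $\mathfrak{s}$ is regular, it contains a Cartan subalgebra $\mathfrak{h}$ of $\mathfrak{g}$, and I note that $\mathfrak{n}_{\mathfrak{g}}(\mathfrak{s})$ is itself a subalgebra of $\mathfrak{g}$ which contains $\mathfrak{s} \supset \mathfrak{h}$. Hence $\mathfrak{n}_{\mathfrak{g}}(\mathfrak{s})$ is also a regular subalgebra, and Lemma \ref{regular} applies to both.

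Concretely, I would write
\[\mathfrak{s} \;=\; \mathfrak{h}\oplus \bigoplus_{\alpha\in \Delta'}\mathfrak{g}_{\alpha}, \qquad \mathfrak{n}_{\mathfrak{g}}(\mathfrak{s}) \;=\; \mathfrak{h}\oplus \bigoplus_{\alpha\in \Delta''}\mathfrak{g}_{\alpha},\]
for uniquely determined subsets $\Delta', \Delta'' \subset \Delta$. From $\mathfrak{s} \subset \mathfrak{n}_{\mathfrak{g}}(\mathfrak{s})$ one immediately has $\Delta' \subset \Delta''$, so the content is to prove the reverse inclusion.

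The key step: suppose $\alpha \in \Delta''$, and pick a non-zero $X_\alpha \in \mathfrak{g}_\alpha \subset \mathfrak{n}_{\mathfrak{g}}(\mathfrak{s})$. Since $\mathfrak{h} \subset \mathfrak{s}$, we have $[X_\alpha, H] \in \mathfrak{s}$ for every $H \in \mathfrak{h}$. Choose $H \in \mathfrak{h}$ with $\alpha(H) \neq 0$ (possible since $\alpha \neq 0$); then
\[[X_\alpha, H] \;=\; -\alpha(H)\, X_\alpha \;\in\; \mathfrak{s},\]
whence $X_\alpha \in \mathfrak{s}$ and therefore $\mathfrak{g}_\alpha \subset \mathfrak{s}$, i.e., $\alpha \in \Delta'$. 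This gives $\Delta'' = \Delta'$ and hence $\mathfrak{n}_{\mathfrak{g}}(\mathfrak{s}) = \mathfrak{s}$.

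The argument is essentially one line once Lemma \ref{regular} is in place, so I do not anticipate any real obstacle; the only point requiring a moment of care is to justify that $\mathfrak{n}_{\mathfrak{g}}(\mathfrak{s})$ is itself regular (so that Lemma \ref{regular} may be invoked), which follows at once from $\mathfrak{h} \subset \mathfrak{s} \subset \mathfrak{n}_{\mathfrak{g}}(\mathfrak{s})$.
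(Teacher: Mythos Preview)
Your proof is correct and is essentially the same as the paper's: both apply Lemma~\ref{regular} to the regular subalgebra $\mathfrak{n}_{\mathfrak{g}}(\mathfrak{s})\supset \mathfrak{h}$ and then bracket a root vector $X_\alpha\in\mathfrak{n}_{\mathfrak{g}}(\mathfrak{s})$ against $H\in\mathfrak{h}\subset\mathfrak{s}$ to force $X_\alpha\in\mathfrak{s}$. The only cosmetic difference is that you explicitly decompose $\mathfrak{s}$ as well and phrase the conclusion as $\Delta''=\Delta'$, whereas the paper argues by contradiction that $\alpha(H)=0$ for all $H$ if $X_\alpha\notin\mathfrak{s}$.
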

\begin{proof}
Clearly, $\mathfrak{s}\subset \mathfrak{n}_{\mathfrak{g}}(\mathfrak{s})$.  Let $\mathfrak{h}$ be a Cartan subalgebra of $\mathfrak{g}$ contained in $\mathfrak{s}$. It follows that $\mathfrak{n}_{\mathfrak{g}}(\mathfrak{s})$ is regular (since it contains $\mathfrak{h}$). By Lemma~\ref{regular} we may write 
\[\mathfrak{h}\subset\mathfrak{n}_{\mathfrak{g}}(\mathfrak{s})=\mathfrak{h}\oplus \bigoplus_{\alpha\in \Delta'}\mathfrak{g}_{\alpha}\]
where $\Delta'$ is the subset the set of $\mathfrak{h}$--roots $\mathfrak{g}_\alpha$ of $\mathfrak{g}$ satisfying $\mathfrak{g}_\alpha\cap\mathfrak{n}_{\mathfrak{g}}(\mathfrak{s})\not=0$. Then, for any $H\in \mathfrak{h} \subset \mathfrak{s}$, $\alpha\in \Delta'$ and non-zero $X_{\alpha}\in \mathfrak{g}_{\alpha}$, since $X_\alpha$ normalizes $\mathfrak{s}$ we have 
\[ [X_{\alpha},H] = \alpha(H)X_{\alpha} \in \mathfrak{s}.\]

If $X_{\alpha}$ does not belong to $\mathfrak{s}$, then we must have $\alpha(H)=0$. But $H$ was an arbitrary element of $\mathfrak{h}$, so $\alpha=0$, which contradicts $\alpha\in \Delta'$. As a result, $X_{\alpha}$ must belong to $\mathfrak{s}$ and therefore $\mathfrak{n}_{\mathfrak{g}}(\mathfrak{s})$ is contained in $\mathfrak{s}$. 
\end{proof}

A subgroup $A$ of $G$ is said to {\it normalize} a subgroup $B$ if $A\leq N_G(B)$. For example, $B$ always normalizes itself.  We have shown in Proposition \ref{LBDS} that bad subgroups normalize a Levi subgroup of a parabolic subgroup in $G$, or a BdS subgroup of $G$.

\begin{cor}\label{selfnorm}
Let $G$ be a reductive $\C$-group and $S$ a connected subgroup containing a Cartan subgroup of $G$, then $N_G(S)/S$ is a finite group.

\end{cor}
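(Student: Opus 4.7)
The plan is to leverage Lemma~\ref{liealgnorm} by passing from the subgroup $S$ to its Lie algebra $\mathfrak{s}$, and then compare the identity components of $S$ and $N_G(S)$.

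First, I would observe that since $S$ contains a Cartan subgroup $H$ of $G$, its Lie algebra $\mathfrak{s}$ contains the corresponding Cartan subalgebra $\mathfrak{h} = \mathrm{Lie}(H)$ of $\mathfrak{g}$, so $\mathfrak{s}$ is a regular subalgebra of $\mathfrak{g}$ in the sense of the definition preceding Lemma~\ref{regular}. Applying Lemma~\ref{liealgnorm}, we obtain
\[\mathfrak{n}_{\mathfrak{g}}(\mathfrak{s}) = \mathfrak{s}.\]

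Next, I would use the standard fact from the theory of algebraic groups that the Lie algebra of $N_G(S)$ (the normalizer of a closed subgroup $S$ in an algebraic group $G$) is precisely $\mathfrak{n}_{\mathfrak{g}}(\mathfrak{s})$. Thus $N_G(S)$ and $S$ have the same Lie algebra, and in particular the same dimension. Since $S$ is connected and contained in $N_G(S)$, this forces $S = N_G(S)^0$, the identity component of the normalizer.

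Finally, since $N_G(S)$ is a closed subgroup of the algebraic group $G$, it has only finitely many connected components, so $N_G(S)/N_G(S)^0 = N_G(S)/S$ is finite, as required. The only subtle point is justifying the identification $\mathrm{Lie}(N_G(S)) = \mathfrak{n}_{\mathfrak{g}}(\mathfrak{s})$, but this is standard (see, e.g., Borel's \emph{Linear Algebraic Groups}); everything else is an immediate consequence of Lemma~\ref{liealgnorm} together with the fact that algebraic subgroups have finitely many components.
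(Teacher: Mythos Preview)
Your proof is correct and follows essentially the same approach as the paper: both use Lemma~\ref{liealgnorm} to conclude that $N_G(S)$ and $S$ have the same Lie algebra (so $N_G(S)/S$ is discrete), and then invoke the fact that an algebraic subgroup has finitely many components. Your version is simply a more explicit unpacking of what the paper states in two sentences; note also that you only need the easy inclusion $\mathrm{Lie}(N_G(S)) \subseteq \mathfrak{n}_{\mathfrak{g}}(\mathfrak{s})$, since combining it with $\mathfrak{n}_{\mathfrak{g}}(\mathfrak{s}) = \mathfrak{s} \subseteq \mathrm{Lie}(N_G(S))$ already gives equality.
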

\begin{proof}
Lemma \ref{liealgnorm} implies that $N_G(S)/S$ is a discrete group. Since $N_G(S)$ is algebraic, it has a finite number of connected components. Thus, $N_G(S)/S$ is finite. 

\end{proof}

The next lemma, which generalizes~\cite[Theorem 2.9]{FLR} will be used to give a lower bound on the codimension of the bad locus.

\begin{lem}\label{codim} 
Let $G$ be a reductive $\C$-group, $r\geqs 2$ and $H$ be an algebraic subgroup of $G$. Let
$$\varphi_H: G\times \hom(\F_r, H)\to \hom(\F_r,G)$$ be defined by $(g,\rho)\mapsto g\rho g^{-1}$.  Let $\mathcal{H}$ be the image of $\varphi_H$. Then $$\codim_\C(\mathcal{H})\geqs(r-1)\codim_\C(H).$$ 
\end{lem}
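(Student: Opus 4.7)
The plan is a dimension count driven by a free $H$-action on the domain of $\varphi_H$. The source $G \times \hom(\F_r, H)$ has complex dimension $\dim_\C G + r \dim_\C H$, and the target $\hom(\F_r, G) = G^r$ has dimension $r \dim_\C G$. So it will suffice to establish $\dim_\C \mathcal{H} \leqs \dim_\C G + (r-1) \dim_\C H$; subtracting from $r \dim_\C G$ then yields exactly $(r-1)(\dim_\C G - \dim_\C H) = (r-1)\codim_\C(H)$.

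The key ingredient I will introduce is the $H$-action on $G \times \hom(\F_r, H)$ given by $h \cdot (g, \rho) := (gh,\, h^{-1} \rho h)$. This action is free, since its projection to the $G$-factor is right translation (which has no fixed points for $h \neq e$), so every $H$-orbit has dimension exactly $\dim_\C H$. A direct computation shows
\[\varphi_H(gh,\, h^{-1} \rho h) \;=\; gh \cdot h^{-1} \rho h \cdot h^{-1} g^{-1} \;=\; g \rho g^{-1} \;=\; \varphi_H(g, \rho),\]
so $\varphi_H$ is constant on $H$-orbits. Consequently, each non-empty fiber of $\varphi_H$ contains an entire $H$-orbit and therefore has complex dimension at least $\dim_\C H$.

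The final step will be to invoke the fiber dimension theorem for morphisms of affine varieties: if every non-empty fiber of $\varphi_H$ has dimension at least $\dim_\C H$, then the constructible image $\mathcal{H}$ has dimension at most $(\dim_\C G + r \dim_\C H) - \dim_\C H = \dim_\C G + (r-1)\dim_\C H$, which is precisely the bound required. The one mild technical wrinkle is that $H$ is not assumed connected; I would handle this by decomposing $G \times H^r$ into its finitely many irreducible components and applying the fiber dimension theorem to each, then taking the maximum resulting dimension. I do not expect a real obstacle here: once the free $H$-action is written down, everything else is bookkeeping, and the rank/freeness of that action is exactly what converts the loss in the source dimension into a $(r-1)$-fold gain in codimension.
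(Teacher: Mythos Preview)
Your proof is correct and follows essentially the same approach as the paper: both identify the free $H$-action $(g,\rho)\mapsto (gh, h^{-1}\rho h)$ to show every non-empty fiber of $\varphi_H$ has dimension at least $\dim_\C H$, then conclude via a fiber-dimension argument. The only cosmetic difference is that the paper cites Hardt's semi-algebraic local triviality theorem rather than the algebraic fiber dimension theorem, but the underlying idea is identical.
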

\begin{proof} For each $\rho = g\psi g^{-1}\in \mathcal{H}$, with $\psi\co F_r\to H$, the fiber of $\varphi_H$ over $\rho$ contains the subspace $\{(gh, h^{-1} \psi h): h\in H\}$, which is homeomorphic to $H$. Hence each fiber of $\varphi_H$ has dimension at least the dimension of $H$. The result now follows from Hardt's Theorem~\cite{Hardt} (see also~\cite[Corollary 4.2]{Coste}).

\end{proof}

We remark that $\mathcal{H}$ in the above lemma is exactly the set of representations $\rho:\F_r\to G$ which are conjugate in $G$ to a representation with values in $H$.  We now put together the above results to obtain a bound on the codimension of the bad locus.

\begin{thm}\label{codimbad}
Let $G$ be a connected, reductive $\C$-group and $r\geqs 2$. Then 
\[\codim_{\mathbb{C}}(\hom^{bad}(\F_r,G))\geqs 2(r-1)\min\{\mathrm{Rank}(G')\ |\ G' \text{ is a simple factor of } \widetilde{DG}\} .\]
\end{thm}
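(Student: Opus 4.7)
The plan is to cover $\hom^{bad}(\F_r,G)$ by finitely many algebraic sets of controllable codimension, one for each conjugacy class of subgroups arising in Proposition~\ref{LBDS}. Let $H$ range over representatives of the (finitely many) $G$-conjugacy classes of Levi subgroups of proper parabolic subgroups of $G$ together with those of BdS subgroups of $G$. Proposition~\ref{LBDS} guarantees that every bad representation has image in $gN_G(H)g^{-1}$ for some $g\in G$ and some such $H$, so
\[\hom^{bad}(\F_r,G)\subseteq \bigcup_H \varphi_{N_G(H)}\bigl(G\times \hom(\F_r,N_G(H))\bigr).\]
Applying Lemma~\ref{codim} to each algebraic subgroup $N_G(H)$ yields that the image has codimension at least $(r-1)\codim_\C(N_G(H))$.

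Each such $H$ is connected, reductive, and of full rank in $G$, so Corollary~\ref{selfnorm} applies and gives $\dim N_G(H)=\dim H$, hence $\codim_\C(N_G(H))=\codim_\C(H)$. The problem therefore reduces to showing
\[\codim_\C(H) \geqs 2\,\min\{\mathrm{Rank}(G')\ :\ G' \text{ is a simple factor of } \widetilde{DG}\}\]
for every such $H$. Writing $\mathfrak{g}=\mathfrak{z}(\mathfrak{g})\oplus \bigoplus_{i} \mathfrak{g}_i$ as a direct sum of its center and simple ideals and applying Lemma~\ref{regular2} after a suitable conjugation, one can express $\mathrm{Lie}(H)=\mathfrak{z}(\mathfrak{g})\oplus\bigoplus_{i}\mathfrak{s}_i$ with each $\mathfrak{s}_i$ regular in $\mathfrak{g}_i$. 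Since $H$ is proper in $G$, at least one factor $\mathfrak{s}_i$ is a proper (full-rank, regular, reductive) subalgebra of $\mathfrak{g}_i$; and since codimensions add across factors, $\codim_\C(H)\geqs \codim_{\mathfrak{g}_i}(\mathfrak{s}_i)$.

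The remaining step, which I expect to be the main obstacle, is the uniform estimate: for every simple complex Lie algebra $\mathfrak{g}'$ of rank $n$, every proper regular reductive subalgebra $\mathfrak{s}$ of full rank (either a Levi of a proper parabolic or a BdS subalgebra) satisfies $\codim_{\mathfrak{g}'}(\mathfrak{s})\geqs 2n$. Because $\mathfrak{s}$ contains a Cartan subalgebra of $\mathfrak{g}'$, Lemma~\ref{regular} writes its codimension as twice the number of excluded positive roots; for Levis of maximal parabolics this count equals the number of positive roots with nonzero coefficient on the removed simple root, while for maximal BdS subalgebras it is dictated by the Borel-de Siebenthal classification (Table~\ref{BdS}). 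A direct inspection across the simple types $A_n,B_n,C_n,D_n,E_6,E_7,E_8,F_4,G_2$ using their (extended) Dynkin diagrams confirms the bound $2n$ in each case, and since non-maximal regular subalgebras have only larger codimension, this completes the proof.
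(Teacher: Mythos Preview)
Your proposal is correct and follows essentially the same approach as the paper: cover the bad locus via Proposition~\ref{LBDS}, apply Lemma~\ref{codim} and Corollary~\ref{selfnorm} to reduce to bounding $\codim_\C(H)$, use Lemma~\ref{regular2} to reduce to a single simple factor, and verify the bound $2n$ case by case for the simple types using Tables~\ref{Levi} and~\ref{BdS}. The paper makes the finiteness of conjugacy classes and the reduction to maximal subalgebras slightly more explicit, but the logical structure is identical.
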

\begin{proof}
By Proposition \ref{LBDS}, for each bad representation $\rho$ there exists a subgroup $S < G$, which is 
either a Levi subgroup of a proper parabolic subgroup (Type I, Definition \ref{types}) or a BdS subgroup (Type II, Definition \ref{types}), such that the image of $\rho$ 
is contained in the normalizer $N_G(S)$. Recall parabolic subgroups $P$ are in bijection (up to conjugation) with subsets of nodes from the Dynkin diagram of $\mathfrak{g}$, and Levi subgroups of a parabolic are all conjugate by elements of $P$.  So there are finitely many conjugacy classes of Type 1 subgroups. Similarly, by \cite{Tit} there are only finitely many conjugacy classes of Type II. Thus there exists a finite set  $\mathcal{S}$ of subgroups of $G$ such that  the image of each bad representation is contained in a conjugate of $N_G(S)$ for some $S\in \mathcal{S}$. 

There exists a decomposition $\mathfrak{g} = \mathfrak{z}\oplus \mathfrak{g}_1 \oplus \cdots \oplus \mathfrak{g}_n$ with $\mathfrak{z}$ abelian and $\mathfrak{g}_i$ simple for each $i$.
By Lemma \ref{regular2}, we may assume without loss of generality that for each $S\in \mathcal{S}$, its Lie algebra $\mathfrak{s}$ has the form $\mathfrak{z} \oplus \mathfrak{s}_1 \oplus \cdots \oplus \mathfrak{s}_n$, where $\mathfrak{s}_i = \mathfrak{g}_i$ for all but one index $i (S)$, and  $\mathfrak{s}_{i(S)} \subset \mathfrak{g}_{i(S)}$ is either a maximal proper parabolic or a maximal proper BdS subalgebra of $\mathfrak{g}_{i(S)}$.  

Now we apply Lemma \ref{codim}.  Let $\mathcal{N}_S$ be the image of $\varphi_{N_G(S)}$, where $\varphi_{N_G(S)}$ is the map in Lemma \ref{codim}.  So $\hom(\F_r,G)^{bad}\subset \cup_{S\in\mathcal{S}}\mathcal{N}_S.$  Thus, $$\dim_\C(\hom(\F_r,G)^{bad})\leqs \max_{S\in \mathcal{S}}\dim_\C\mathcal{N}_S$$ which implies
$$\codim_\C(\hom(\F_r,G)^{bad})\geqs r\dim_\C(G)-\max_{S\in \mathcal{S}}\dim_\C\mathcal{N}_S,$$ and so $$\codim_\C(\hom(\F_r,G)^{bad})\geqs \min_{S\in \mathcal{S}}\codim_\C\mathcal{N}_S.$$
From Lemma \ref{codim}, for each $S\in \mathcal{S}$, $\codim_\C(\mathcal{N}_S)\geqs(r-1)\codim_\C(N_G(S))$.  Therefore, 
\[\codim_\C(\hom(\F_r,G)^{bad})\geqs (r-1) \min_{S\in \mathcal{S}}\codim_{\mathbb{C}}(N_G(S)).\]

Lemma~\ref{selfnorm} now yields
$$\codim_{\mathbb{C}}(N_G(S))=\codim_{\mathbb{C}}(S),$$
and so $$\codim_\C(\hom(\F_r,G)^{bad})\geqs (r-1) \min_{S\in \mathcal{S}}\codim_{\mathbb{C}}(S).$$ Since $G$ and $S$ are connected it suffices to prove the analogous result for their corresponding Lie algebras. 

In the simple case, the claimed bound is a consequence of the explicit classification of Levi subalgebras of maximal parabolic subalgebras, and maximal BdS subalgebras in simple Lie algebras.  This classification in the case of maximal parabolic subalgebras can be derived from \cite{Borel}.  The classification of BdS subalgebras follows from work of \cite{Tit}.  We tabulate the codimension of all such simple Lie algebras in Tables \ref{Levi} and \ref{BdS} in Appendix \ref{appa}, finding the required codimension bound in each case.

For a general reductive group, from our choice of $\mathcal{S}$, for each $\mathfrak{s}\in \mathcal{S}$, we see that the codimension of $\mathfrak{s}$ is the same as the codimension of $\mathfrak{s}_{i(S)}$ in the simple factor $\mathfrak{g}_{i(S)}$. The desired  codimension bound now follows from the case of simple Lie algebras.
\end{proof}

\begin{rem}
A couple remarks are in order.  When referring to Tables \ref{Levi} and \ref{BdS}, one might be tempted to think there are a few low rank exceptions to the above theorem.  However, this is not the case since there are the following low rank isomorphisms: $\mathfrak{sp}_2(\C)\cong\mathfrak{sl}_2(\C)$, and $\mathfrak{sp}_4(\C)\cong\mathfrak{so}_5(\C)$. And $\mathfrak{so}_4(\C)\cong\mathfrak{sl}_2(\C)\times \mathfrak{sl}_2(\C)$ and so is not simple. The last case that is not addressed in the tables is when $G=\SO_2(\C)$.  In that case, it is easy to see that the bad locus is empty.  So $\codim_\C\hom(\F_r,G)^{bad}=\dim_\C G^r=r\geqs 2\mathrm{Rank}(DG)=0$ since the derived subgroup of an abelian group is trivial, and so has rank $0$.
\end{rem}

\section{The Singular Locus}\label{singsec}

In this section we undertake a close analysis of singularities in $\X_r (G)$ in both the algebraic and topological categories.

\subsection{Local Structure}

For a subset $X$ of a group $G$, let $Z_G(X)$ be the group of elements in $G$ that commute with all elements in $X$.  We let $Z_G(\rho)$ for $\rho\in \hom(\F_r,G)$ be $Z_G(\rho(\F_r))$.

\begin{lem}
Let $G$ be a reductive $\C$-group.  Let $\Gamma$ be a subgroup of $G$ containing $Z(G)$ such that $\Gamma/Z(G)$ is finite, and non-abelian.  If $\gamma Z(G)\in \Gamma/Z(G)$ is not central in $\Gamma/Z(G)$, then $\dim_\C Z_G(\Gamma)<\dim_\C Z_G(\gamma)$.  
\end{lem}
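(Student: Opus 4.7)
The plan is to proceed by contradiction, exploiting the fact that $Z_G(\Gamma)\subseteq Z_G(\gamma)$ is automatic (anything commuting with all of $\Gamma$ in particular commutes with $\gamma\in\Gamma$).  Suppose the strict inequality fails, so $\dim_\C Z_G(\Gamma)=\dim_\C Z_G(\gamma)$.  I will deduce that $\gamma$ itself centralizes $\Gamma$, which directly contradicts the hypothesis that $\gamma Z(G)$ is non-central in $\Gamma/Z(G)$ (since the latter means there exists $\delta\in\Gamma$ with $[\gamma,\delta]\notin Z(G)$, and in particular $[\gamma,\delta]\neq 1$).

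The first step is to upgrade the dimension equality to the honest inclusion $Z_G(\gamma)^0\subseteq Z_G(\Gamma)$.  Set $A:=Z_G(\Gamma)\cap Z_G(\gamma)^0$.  Because $Z_G(\gamma)^0$ has finite index in $Z_G(\gamma)$, $A$ has finite index in $Z_G(\Gamma)$, so $\dim_\C A=\dim_\C Z_G(\Gamma)=\dim_\C Z_G(\gamma)^0$.  Since $A$ is a closed subgroup of the connected (hence irreducible) algebraic group $Z_G(\gamma)^0$ of full dimension, $A=Z_G(\gamma)^0$, which gives the desired inclusion.

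The second step is to show $\gamma\in Z_G(\gamma)^0$.  Since $\Gamma/Z(G)$ is finite, $\gamma^n\in Z(G)$ for some $n\geqs 1$.  In a connected reductive $\C$-group, every central element is semisimple: any $z\in Z(G)$ lies in the centralizer $Z_G(T)=T$ of each maximal torus $T$, and is therefore contained in $T$.  Uniqueness of the Jordan decomposition in characteristic $0$ then forces $\gamma$ itself to be semisimple (a unipotent element with trivial $n$-th power is trivial).  Hence $\gamma$ lies in a maximal torus $T\subset G$, and since $T$ is abelian and connected, $T\subseteq Z_G(\gamma)^0$; in particular $\gamma\in Z_G(\gamma)^0\subseteq Z_G(\Gamma)$.

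Combining the two steps, $\gamma$ commutes with every element of $\Gamma$, so $\gamma Z(G)$ is central in $\Gamma/Z(G)$, producing the contradiction.  I do not anticipate a serious obstacle: the only mildly delicate point is the dimension bookkeeping in the first step, which is routine once one recalls that the irreducible components of an algebraic group are the cosets of the identity component and all share a common dimension.
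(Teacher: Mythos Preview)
Your argument is correct and is essentially the same as the paper's, only spelled out in more detail: both hinge on showing that $\gamma$ is semisimple (from $\gamma^n\in Z(G)$) and therefore lies in $Z_G(\gamma)^0$, while $\gamma\notin Z_G(\Gamma)$; the paper phrases this as exhibiting the one-parameter subgroup through $\gamma=\exp(X)$ inside $Z_G(\gamma)$ but not $Z_G(\Gamma)$, whereas you place $\gamma$ in a maximal torus $T\subset Z_G(\gamma)^0$ and run the contradiction. Your Step~1 could be shortened by noting directly that $Z_G(\Gamma)^0\subset Z_G(\gamma)^0$ are connected of equal dimension and hence equal, but your version via $A$ is fine.
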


\begin{proof}
Since $\Gamma/Z(G)$ is finite, $\gamma$ needs to be semisimple (compare Proposition \ref{commsemi}). Write $\gamma=\exp(X)$ with $X\in \mathfrak{g}$. Then $Z_G(\gamma)$ contains the $1$-parameter subgroup generated by $X$ while $Z_G(\Gamma)$ does not, whence the result. 
\end{proof}

\begin{prop}\label{ab-gen}
Let $G$ be a reductive $\C$-group and $r\geqs 2$. Bad representations $\F_r\to G$ with abelian $($non-trivial$)$ stabilizers in $PG$ are Euclidean dense $($and hence Zariski dense$)$ in the locus of bad representations $\F_r\to G$.
\end{prop}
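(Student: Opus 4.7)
Given a bad representation $\rho$ with $\Gamma := Z_G(\rho)/Z(G)$ non-abelian (the abelian case needing no work), my plan is to perturb $\rho$ within $\hom(\F_r, Z_G(\gamma))$ for a suitably chosen non-central $\gamma\in Z_G(\rho)$, and to show that generic perturbations remain bad but acquire an abelian stabilizer. Concretely, I will first pick $\gamma\in Z_G(\rho)$ whose image in $\Gamma$ is non-central (which exists because $\Gamma$ is non-abelian); in particular $\gamma\notin Z(G)$. By Proposition~\ref{commsemi} the element $\gamma$ is semisimple, and by Lemma~\ref{semisimplecentralizers} the identity component $L := Z_G(\gamma)^0$ is a proper, connected, reductive subgroup of $G$ containing a maximal torus $T$ of $G$.

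The first key observation is that $Z_G(L)$ is abelian: any element of $Z_G(L)$ centralizes $T$ and must therefore lie in $T$, since a maximal torus of a connected reductive group is self-centralizing; hence $Z_G(L)\le T$. The second observation is that, since $\rho(\F_r)\subseteq Z_G(\gamma)$, $\rho$ lies in some irreducible component $C$ of $\hom(\F_r, Z_G(\gamma))$, and after a choice of coset representatives for the generators, $C$ is diffeomorphic to $L^r$. For $r\geqs 2$, the subset of $\rho'\in C$ whose Zariski closure $\overline{\rho'(\F_r)}$ has identity component equal to $L$ is a non-empty Zariski open subset of $C$; this is essentially the Zariski-density locus in the (possibly disconnected) algebraic group $L\cdot \pi_0\rho(\F_r)\subseteq Z_G(\gamma)$, and its non-emptiness and openness follow from the argument underlying Proposition~\ref{zd-prop} together with \cite[Proposition~8.2]{AcBu}. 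For any such $\rho'$, one has $Z_G(\rho')\le Z_G(L)$, hence abelian; and $\gamma\in Z_G(\rho')$ because $\gamma$ centralizes $Z_G(\gamma)\supseteq\rho'(\F_r)$. Thus $Z_G(\rho')/Z(G)$ is abelian and non-trivial.

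To finish, irreducibility is Zariski open in $\hom(\F_r,G)$, so every $\rho'$ sufficiently Euclidean-close to $\rho$ inside $C$ remains irreducible in $G$ and is therefore bad. A non-empty Zariski open subset of the irreducible variety $C$ is Euclidean dense in $C$, so the locus constructed above contains representations arbitrarily close to $\rho$; this yields bad representations with abelian, non-trivial stabilizer in $PG$ arbitrarily close to $\rho$, as required. I expect the main technical obstacle to be verifying that generic $\rho'\in C$ actually collapse the stabilizer from $Z_G(\rho)$ down into $Z_G(L)$, rather than retaining the full non-abelian stabilizer; this amounts to transferring the Zariski-dense image condition from $\hom(\F_r,L)$ to the possibly non-identity component $C$, which is standard but requires handling the discrete quotient $\pi_0(Z_G(\gamma))$ with care.
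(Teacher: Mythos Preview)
Your approach is genuinely different from the paper's, and the core idea---that $Z_G(L)$ is abelian because $L=Z_G(\gamma)^0$ contains a maximal torus $T$ of $G$, whence $Z_G(L)\subseteq Z_G(T)=T$---is correct and elegant. The paper instead argues iteratively: using the auxiliary lemma that $\dim_\C Z_G(Z_G(\rho))<\dim_\C Z_G(\xi)$ when $\xi Z(G)$ is non-central in $Z_G(\rho)/Z(G)$, it perturbs $\rho$ inside $\hom(\F_r,Z_G(\xi))$ to a bad $\rho'$ with \emph{strictly smaller} centralizer, then repeats until the stabilizer becomes abelian. Your one-shot argument trades this iteration for a single Zariski-density step.

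However, two technical points weaken your argument as written. First, the locus $\{\rho'\in C:\overline{\rho'(\F_r)}^0=L\}$ is \emph{not} Zariski open in general: when $L$ has positive-dimensional center (which happens whenever $\mathfrak{z}_\mathfrak{g}(\gamma)$ is not a BdS subalgebra), the torus part of $L$ makes Zariski-density only a Euclidean-dense condition, not a Zariski-open one. This is harmless for the proposition, but your citation of \cite[Proposition~8.2]{AcBu} (which concerns semisimple groups) does not cover it. Second, and more seriously, when $\rho(\F_r)$ meets non-identity components of $Z_G(\gamma)$, neither Proposition~\ref{zd-prop} nor \cite{AcBu} directly shows that generic $\rho'\in C$ has $\overline{\rho'(\F_r)}\supseteq L$; one must pass to the finite-index kernel $N=\ker(\F_r\to\pi_0 Z_G(\gamma))$ and argue that the restriction map $C\to\hom(N,L)$ hits the Zariski-dense locus densely, which requires an additional argument you have not supplied. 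You correctly flag this as the main obstacle, but ``standard'' overstates how immediate it is. The paper's dimension-comparison avoids both issues: at each step it only needs that $\hom(\F_r,Z_G(Z_G(\rho)))$ has strictly smaller dimension than the component of $\hom(\F_r,Z_G(\xi))$ containing $\rho$, which follows at once from the auxiliary lemma.
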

\begin{proof}
Let $\rho$ be a bad representation and assume its stabilizer $Z_G(\rho)/Z(G)$  in $PG$ is non-abelian. Let $\xi$ be in $Z_G(\rho)$ and assume $\xi Z(G)$ is not central in $Z_G(\rho)/Z(G)$. 

By the preceding lemma applied to $Z_G(\rho)$ and $\xi$, $\dim_\C Z_G(Z_G(\rho))<\dim_\C Z_G(\xi)$. It follows that $\dim_\C \hom(\F_r,Z_G(Z_G(\rho)))<\dim_\C \hom(\F_r,Z_G(\xi))$, and therefore the complement of $\hom(\F_r,Z_G(Z_G(\rho)))$ is dense in $\hom(\F_r,Z_G(\xi))$. 

Given an open subset of $\hom(\F_r,Z_G(\xi))$ containing $\rho$, up to intersecting with the irreducible locus, we may assume that it is an open subset of the irreducible locus.  Call this neighborhood $U$. From the previous paragraph, there exist representations in $Z_G(\xi)$ that do not commute with $Z_G(\rho)$. Therefore, in $U$ there is an irreducible representation $\rho'$ commuting with $\xi$ but not with $Z_G(\rho)$ and thus a bad representation $\rho'$ with a strictly smaller centralizer. 

If $Z_G(\rho')/Z(G)$ is abelian, we are done and if it is not we may repeat the above argument to find a bad representation with a strictly smaller stabilizer in $PG$. Since the stabilizer is finite, this process will eventually stop. 
\end{proof}

For a group $G$ acting on a space $X$, we denote the fixed locus by $X^G$. We now show that for bad representations, the action of the stabilizer on cohomology never includes pseudoreflections, that is, finite order elements whose fixed locus is of codimension $1$ (compare \cite[Lemma 8.5]{Ri}).

\begin{prop}\label{nopseudo}
Let $G$ be a connected, reductive $\C$-group and $r\geqs 2$.  If $\rho:\F_r\to G$ is a bad representation commuting with $\xi\notin Z(G)$. Then \[\codim_\C H^1(\F_r;\mathfrak{g}_{\Ad_\rho})^{\langle \xi\rangle}\geqs 2.\]
\end{prop}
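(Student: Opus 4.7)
The plan is to compute the codimension of the $\xi$-invariants directly from the standard description of group cohomology for a free group, then show that the conjugation action of $\xi$ on $\mathfrak{g}$ already forces the codimension to be at least~$2(r-1)$.

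First, by Proposition~\ref{commsemi}, since $\xi$ commutes with an irreducible representation, $\xi$ is semisimple, so $\Ad_\xi$ acts diagonalizably on $\mathfrak{g}$ and hence on all associated spaces. Next, recall that for the free group $\F_r$ the cochain description gives a four-term exact sequence
\[
0 \longrightarrow \mathfrak{z}_\mathfrak{g}(\rho) \longrightarrow \mathfrak{g} \xrightarrow{\;\partial\;} Z^1(\F_r;\mathfrak{g}_{\Ad_\rho}) \longrightarrow H^1(\F_r;\mathfrak{g}_{\Ad_\rho}) \longrightarrow 0,
\]
where $\partial(X)(w) = \Ad_{\rho(w)}(X)-X$ and $Z^1 \cong \mathfrak{g}^r$ via evaluation on a free basis. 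Since $\xi$ commutes with every $\rho(w)$, the operator $\Ad_\xi$ (acting diagonally on $\mathfrak{g}^r$) is an $\F_r$-equivariant endomorphism of $\mathfrak{g}_{\Ad_\rho}$ making the whole sequence $\langle\xi\rangle$-equivariant. Because $\Ad_\xi$ is semisimple, taking $\langle\xi\rangle$-fixed points is exact, yielding
\[
\dim_\C (H^1)^{\langle\xi\rangle} = (r-1)\dim_\C \mathfrak{z}_\mathfrak{g}(\xi) + \dim_\C \bigl(\mathfrak{z}_\mathfrak{g}(\rho)\cap \mathfrak{z}_\mathfrak{g}(\xi)\bigr),
\]
while $\dim_\C H^1 = (r-1)\dim_\C \mathfrak{g} + \dim_\C \mathfrak{z}_\mathfrak{g}(\rho)$. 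Subtracting gives
\[
\codim_\C (H^1)^{\langle\xi\rangle} = (r-1)\bigl(\dim_\C\mathfrak{g}-\dim_\C\mathfrak{z}_\mathfrak{g}(\xi)\bigr) + \bigl(\dim_\C\mathfrak{z}_\mathfrak{g}(\rho)-\dim_\C\mathfrak{z}_\mathfrak{g}(\rho)^{\langle\xi\rangle}\bigr),
\]
with both summands non-negative.

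It remains to show that $\dim_\C\mathfrak{g}-\dim_\C\mathfrak{z}_\mathfrak{g}(\xi)\geqs 2$, which combined with $r\geqs 2$ finishes the proof. Pick a Cartan subalgebra $\mathfrak{h}$ associated to a Cartan subgroup containing the semisimple element $\xi$, and use the root-space decomposition $\mathfrak{g} = \mathfrak{h} \oplus \bigoplus_{\alpha\in\Delta}\mathfrak{g}_\alpha$. As in the proof of Lemma~\ref{semisimplecentralizers}, $\Ad_\xi$ acts on $\mathfrak{g}_\alpha$ by a scalar $\lambda_\alpha$ with $\lambda_\alpha\lambda_{-\alpha}=1$, so the set $\{\alpha : \lambda_\alpha\neq 1\}$ is stable under $\alpha\mapsto -\alpha$. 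Since $\xi\notin Z(G)$, this set is non-empty, hence contains at least one pair $\{\alpha,-\alpha\}$, which contributes two to $\dim_\C \mathfrak{g} - \dim_\C\mathfrak{z}_\mathfrak{g}(\xi)$. Therefore $\codim_\C (H^1)^{\langle\xi\rangle} \geqs 2(r-1) \geqs 2$, as claimed.

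The main obstacle, such as it is, is justifying the exactness after taking $\xi$-invariants; this is handled by the semisimplicity of $\Ad_\xi$ via Proposition~\ref{commsemi}. Everything else is bookkeeping with the explicit cocycle description and the standard parity argument for codimensions of centralizers of semisimple elements in reductive Lie algebras.
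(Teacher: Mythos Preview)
Your proof is correct and follows essentially the same approach as the paper's. The paper organizes the computation by decomposing the coefficient module as $\mathfrak{g}=\mathfrak{g}^{\langle\xi\rangle}\oplus V$ (which is $\rho$-stable since $\xi$ commutes with $\rho$) and then identifying $H^1(\F_r;\mathfrak{g}_{\Ad_\rho})^{\langle\xi\rangle}=H^1(\F_r;\mathfrak{g}^{\langle\xi\rangle}_{\Ad_\rho})$, whereas you push the $\langle\xi\rangle$-invariants functor through the four-term exact sequence; both routes lead to the same codimension formula and the same final parity estimate $\dim_\C\mathfrak{g}-\dim_\C\mathfrak{z}_\mathfrak{g}(\xi)\geqs 2$ coming from Lemma~\ref{semisimplecentralizers}.
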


\begin{proof}
First, $\Ad_\xi$ is of finite order since $Z_G(\rho)/Z(G)$ is finite. Then we have the decomposition:
\[\mathfrak{g}=\mathfrak{g}^{\langle \xi\rangle}\oplus V,\]
where $V$ is a sum of eigenspaces of $\xi$ for eigenvalues $\neq 1$. It follows that the decomposition above is stable under $\rho$. As a result, we have :
\[ H^1(\F_r;\mathfrak{g}_{\Ad_\rho})= H^1(\F_r;\mathfrak{g}_{\Ad_\rho}^{\langle \xi\rangle})\oplus  H^1(\F_r;V)\]
where the action of $\xi$ is determined by the action on the coefficients of cohomology.  In particular, \[H^1(\F_r;\mathfrak{g}_{\Ad_\rho})^{\langle \xi\rangle}=H^1(\F_r;\mathfrak{g}_{\Ad_\rho}^{\langle \xi\rangle}).\]
The dimension of $H^1(\F_r;V)$ is computed as: $$r\dim_\C V-(\dim_\C V-\dim_\C V^{\F_r})=(r-1)\dim_\C V+\dim_\C V^{\F_r}.$$

 It follows that 
\[\codim_\C H^1(\F_r;\mathfrak{g}_{\Ad_\rho})^{\langle \xi\rangle}\geqs (r-1)\dim_\C V\geq \dim_\C V.\]
Since $V$ is, by definition, complementary to $\mathfrak{g}^{\langle \xi\rangle}=\mathfrak{z}(\xi)=\mathrm{Lie}(Z_G(\xi))$, we have $\dim_\C V=\codim_\C Z_G(\xi)$ and since $Z_G(\xi)$ is regular and reductive (by Lemma~\ref{semisimplecentralizers}), $\codim_\C Z_G(\xi)$ is even and therefore greater than or equal to $2$ (since $\xi\notin Z(G)$). 
  \end{proof}

\begin{lem}\label{sing-lem}
A representation
$[\rho]\in \X_r(G)$ is singular $($respectively ugly$)$ if and only if $[0]$ is an algebraic $($respectively topological$)$ singularity in $H^1(\F_r;\mathfrak{g}_{\Ad_\rho})\aq \mathrm{Stab}(\rho)$.
In fact, if $[\rho]$ is not ugly in $\X_r (G)$ and $ \mathrm{Stab}(\rho)$ is finite, then $H^1(\F_r;\mathfrak{g}_{\Ad_\rho})\aq \mathrm{Stab}(\rho)$ is homeomorphic to a Euclidean space.
\end{lem}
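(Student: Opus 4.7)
My plan is to invoke Luna's slice theorem together with Proposition~\ref{sing-princ} to identify the local model of $[\rho]$ in $\XC{r}(G)$ with $H^1(\F_r;\mathfrak{g}_{\Ad_\rho})\aq \mathrm{Stab}(\rho)$. Since every fiber of the GIT quotient $\pi_G\co\hom(\F_r,G)\to\XC{r}(G)$ contains a unique closed orbit, I may (and will) assume $\rho$ is polystable (completely reducible), so that $G_\rho = Z_G(\rho)$ is reductive. Because $\hom(\F_r,G)\cong G^r$ is smooth, the ``smooth case'' of Luna's theorem applies, and Proposition~\ref{sing-princ} tells us that $[\rho]$ is smooth, singular, or ugly in $\XC{r}(G)$ if and only if $[0]$ is respectively smooth, singular, or ugly in the local model $N_\rho\aq G_\rho$, where $N_\rho = T_\rho(G^r)/T_\rho(G\cdot\rho)$ carries the slice representation of $G_\rho$.

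Next I would identify $N_\rho\aq G_\rho$ with $H^1(\F_r;\mathfrak{g}_{\Ad_\rho})\aq \mathrm{Stab}(\rho)$. On the one hand, $T_\rho(G^r)\cong\mathfrak{g}^r$ coincides with $Z^1(\F_r;\mathfrak{g}_{\Ad_\rho})$ because every $1$-cochain on the free group is automatically a cocycle. On the other hand, differentiating the conjugation action in the direction of $X\in\mathfrak{g}$ produces the coboundary $w\mapsto X-\Ad_{\rho(w)}X$, so $T_\rho(G\cdot\rho) = B^1(\F_r;\mathfrak{g}_{\Ad_\rho})$; hence $N_\rho \cong H^1(\F_r;\mathfrak{g}_{\Ad_\rho})$ as a $G_\rho$-module. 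Since $Z(G)$ acts trivially on $\mathfrak{g}$ via $\Ad$, the slice representation factors through $G_\rho/Z(G) = \mathrm{Stab}(\rho)$. The first statement of the lemma then follows directly from Proposition~\ref{sing-princ}.

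For the second statement, suppose $\mathrm{Stab}(\rho)$ is finite and $[\rho]$ is not ugly. Finiteness means $H^1\aq\mathrm{Stab}(\rho)$ coincides with the set-theoretic quotient $H^1/\mathrm{Stab}(\rho)$, and by the first part of the lemma $[0]$ has a Euclidean neighborhood in it. The scalar $\C^*$-action on $H^1$ commutes with $\mathrm{Stab}(\rho)$ and descends to a $\C^*$-action on $H^1/\mathrm{Stab}(\rho)$ whose only fixed point is $[0]$; this realizes the quotient as an open topological cone on the link $L := S(H^1)/\mathrm{Stab}(\rho)$, where $S(H^1)$ denotes the unit sphere.

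The key step, which I expect to require the most care, is to propagate the Euclidean neighborhood at $[0]$ to a homeomorphism from the whole quotient to a Euclidean space. I would argue as follows. Choose $\varepsilon>0$ so small that the open ``ball'' $B_\varepsilon^\circ := \{\|v\|<\varepsilon\}/\mathrm{Stab}(\rho)$ sits inside the Euclidean neighborhood $U$ of $[0]$. Scaling by $1/\varepsilon$ is a homeomorphism $H^1/\mathrm{Stab}(\rho) \xrightarrow{\cong} B_\varepsilon^\circ$, so $H^1/\mathrm{Stab}(\rho)$ embeds as an open subset of $U$ and is therefore a topological manifold. Restricting the cone description to the complement of $[0]$ shows that $L \times \R_{>0}$ is an open manifold, so $L$ is itself a topological manifold. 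A local homology computation at the cone point (whose link in the cone is $L$) combined with the fact that $[0]$ is now a manifold point forces $L$ to be a topological sphere (this is where I expect to lean on the dimensional hypotheses implicit in the setup). Hence the open cone on $L$ is homeomorphic to a Euclidean space, finishing the proof.
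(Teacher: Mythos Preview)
Your proof of the first statement is essentially identical to the paper's: reduce to a polystable representative, apply the smooth form of Luna's Slice Theorem, identify the slice representation with $H^1(\F_r;\mathfrak{g}_{\Ad_\rho})$, and invoke Proposition~\ref{sing-princ}. Your extra detail on why $N_\rho\cong H^1$ and why the action factors through $\mathrm{Stab}(\rho)=G_\rho/Z(G)$ is correct and matches what the paper cites from \cite{FL2,Si4,HP}.

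The second statement is where your approach diverges, and there is a genuine gap. You correctly realize $H^1/\mathrm{Stab}(\rho)$ as an open cone on the link $L=S(H^1)/\mathrm{Stab}(\rho)$ and embed it as an open subset of the Euclidean neighborhood. Your local-homology computation at the cone point then gives $\widetilde{H}_*(L)\cong\widetilde{H}_*(S^{n-1})$, i.e.\ $L$ is a \emph{homology} sphere. But that is strictly weaker than what you claim next: a homology sphere need not be a topological sphere (the Poincar\'e homology $3$-sphere is the standard obstruction), and nothing you have written controls $\pi_1(L)$. So the step ``local homology $\ldots$ forces $L$ to be a topological sphere'' is unjustified, and the parenthetical ``this is where I expect to lean on the dimensional hypotheses'' does not supply an argument.

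The paper sidesteps this entirely. After observing that $V\aq\Gamma$ is the open cone on $L$, it notes that the assumed Euclidean neighborhood of $[0]$ is \emph{also} an open cone (namely $\mathcal{C}(S^{2n-1})$) with the same cone point, and then invokes Kwun's uniqueness theorem for open cone neighborhoods \cite{Kw}: two open cone neighborhoods of the same point are homeomorphic. This yields $V\aq\Gamma\cong\C^n$ directly, without ever needing to identify $L$ as a sphere. If you want to salvage your line of argument, the cleanest fix is to drop the homology computation and apply Kwun's theorem at exactly the point where you have exhibited both cone structures.
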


\begin{proof}
Let $[\rho]\in \X_r(G)$. Without loss of generality, we can assume $\rho$ is polystable since there exists a unique closed orbit in $\pi_G^{-1}([\rho])$. Using the Luna Slice Theorem \cite{DrJ,Lu1}, \cite{FL2,Si4,HP} shows for every polystable $\rho\in \hom(\F_r,G)$, there is a local model of $[\rho]\in\X_r(G)$ of the form $H^1(\F_r;\mathfrak{g}_{\Ad_\rho})\aq \mathrm{Stab}(\rho)$, where $H^1(\F_r;\mathfrak{g}_{\Ad_\rho})$ is group cohomology with coefficients in the $\Ad_\rho$-module $\mathfrak{g}$, and $\mathrm{Stab}(\rho)$ is the stabilizer of $\rho$ in $PG$.  The $\C$-points of $H^1(\F_r;\mathfrak{g}_{\Ad_\rho})$ are isomorphic to a $\C$-vector space $V$ where $0$ corresponds to $[\rho]$, and $\Gamma:=\mathrm{Stab}(\rho)$ is a subgroup of $PG$ acting linearly (and so fixes $0$). The first part then follows by Proposition \ref{sing-princ}.  

The final statement of the lemma follows from the more general fact that if $\Gamma$ is a finite group acting linearly on a $\C$-vector space $V$, and there is a neighborhood of $[0] \in V\aq \Gamma$ that is homeomorphic to Euclidean space, then $V\aq \Gamma$ itself is homeomorphic to Euclidean space. To prove this, note that if $\langle \,,\,\rangle$ is any Hermitian inner product on $V$, then $(v, w):= \sum_{g\in G} \langle v, w \rangle$  is a $\Gamma$--invariant Hermitian inner product, whose associated norm $|v| := \sqrt{(v,v)}$ is also $\Gamma$--invariant. Since all norms on finite dimensional $\C$-vector spaces are linearly equivalent, balls and spheres in this $\Gamma$-invariant norm are homeomorphic to ordinary balls and spheres.  Now $\Gamma$ acts on each sphere $$S_\epsilon = \{v\in V :\, |v| = \epsilon\},$$ and in fact by linearity of the action, the scaling map $s:S_\epsilon \to S_{\epsilon'},$ given by scaling each vector to have the desired length, is $\Gamma$-equivariant (that is, $g(sv) = sg(v)$, since $s$ is just scalar multiplication by $\epsilon'/\epsilon$).  We now claim that $V\aq\Gamma$ is homeomorphic to the open cone $$\mathcal{C}_\R(S_1\aq\Gamma):= (S_1\aq\Gamma \cross [0, \infty)) / (S_1\aq \Gamma \cross \{0\}),$$ where $S_1$ is the unit sphere in $V$ (with respect to the $\Gamma$-invariant metric). Indeed, the map $S_1 \cross [0, \infty) \to V$ given by sending $(x, t)\mapsto tx$ is $\Gamma$-equivariant and is an open map, and hence descends to an open map $S_1\aq\Gamma \cross [0, \infty) \to V\aq \Gamma$.  This map is surjective and injective away from $0$, and becomes bijective (hence a homeomorphism by openness) once we factor it through the open cone.  Now Kwun's theorem \cite{Kw} on uniqueness of open cone neighborhoods says that if there exist a neighborhood $U$ of $[0]$ in $V\aq\Gamma$ that is homeomorphic to $\C^n \cong\mathcal{C} (S^{2n-1})$, then since $U$ and $V\aq\Gamma$ are both open cone neighborhoods of $[0]$, they must be homeomorphic.
\end{proof}

In the next subsection we will need to closely analyze the local model 
$$H^1(\F_r;\mathfrak{g}_{\Ad_\rho})\aq \mathrm{Stab}(\rho)$$ 
for reducible representations $\rho$. We will show (in the proof of Theorem~\ref{redugly-thm}) that generically (in the reducible locus) $\mathrm{Stab}(\rho)$ is isomorphic to $\mathbb{C}^*$, and there is a decomposition
 $$H^1(\F_r;\mathfrak{g}_{\Ad_\rho}) \cong \bigoplus_{n=-N}^{N}\mathbb{C}^{d_n},$$
for some non-negative integers $d_{-N},\dots,d_N$ satisfying $d_n=d_{-n}$, so that  the action of $\mathbb{C}^*\isom \mathrm{Stab}(\rho)$ is the diagonal action induced by $\lambda\cdot v=\lambda^n v$ for $v\in \mathbb{C}^{d_n}$.
We will call the numbers $n$ so $d_n\not=0$ weights, and we call the action of $\C^*$ on $\C^{d_n}$ the scalar action of weight $n$. Let $L:=\oplus_{n=1}^N\C^{d_{-n}}$ and $R:=\oplus_{n=1}^N\C^{d_n}$.  The diagonal actions of $\C^*$ on $L$ and $R$ give rise to a $\C^*\times \C^*$ action on $L\oplus R$ such that the action of the diagonal $\C^*\cong \Delta\subset \C^*\times \C^*$ recovers  the action of $\mathrm{Stab}(\rho)$.  Let $\pi_\Delta:L\oplus R\to (L\oplus R)\aq \Delta$ be the GIT projection map with respect to the action by $\Delta$.  Notice that the $\Delta$-orbits in $L\oplus R$ are all closed outside of $\pi_\Delta^{-1}([0])=L\oplus\{0\}\cup \{0\}\oplus R$.  Let $(L\oplus R)^*:=(L\oplus R)-\pi_\Delta^{-1}([0])$ and note that $(L\oplus R)^*\aq \Delta = (L\oplus R)^*/\Delta \cong(L\oplus R)\aq \Delta-\{[0]\}$.  Then $\pi_\Delta:(L\oplus R)^*\to (L\oplus R)^*/ \Delta$ is a geometric quotient.  Note also that $(L\oplus R)^*/ (\C^*\times \C^*)\cong \mathbb{P}(L)_w\times \mathbb{P}(R)_w$ where $\mathbb{P}(L)_w\cong \mathbb{P}(R)_w$ are {\it weighted} projective spaces (see \cite{Dol-wpv}).

To understand this particular local model, we need the following lemmata.

\begin{lem}\label{weighted-action-lem}
Let $k_1, \ldots k_m$ be non-zero integers. Define an action of $\mathbb{T} = \mathbb{C}^*$ on $\mathbb{C}^{m}$ by $t\cdot (v_1,\ldots, v_m)=(t^{k_1} v_1, \ldots, t^{k_m} v_m)$. Let 
$\Upsilon =  \prod_{i=1}^m \mu_i$, where $\mu_i \subset \C^*$ is the group of $k_i$-th roots of unity, and let
$\Gamma = \C^* \cross \prod_{i=1}^m \mu_i$. Define an action of $\Gamma$ on $\mathbb{C}^{m}$ by $(\lambda, \zeta_1, \ldots, \zeta_m) \cdot (v_1, \ldots, v_m) = (\lambda^{\epsilon_1} \zeta_1 v_1, \ldots, \lambda^{\epsilon_m} \zeta_m v_m)$, where $\epsilon_i = k_i/|k_i|$. Then the map $f\co \C^m \to \C^m$, given by $$(v_1,\ldots, v_m)\mapsto (v_1^{|k_1|},\ldots, v_m^{|k_m|}),$$ induces a homeomorphism $\C^m/\Gamma \srm{\isom} \C^m/\mathbb{T}.$

Consequently, in the notation established above,
\begin{equation}\label{eq:M}(L\oplus R)^*/\Delta \isom ((L\oplus R)^*/\C^*)/\Upsilon,\end{equation}
where $\C^*$ acts diagonally by scalar multiplication $($weight $1$ on $R$ and weight $-1$ on $L)$ and $\Upsilon$ is a finite group acting trivially on the homology of $(L\oplus R)^*/\C^*$.
\end{lem}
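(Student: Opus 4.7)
The natural approach is to verify first that $f$ intertwines the $\Gamma$-action on the source with the $\mathbb{T}$-action on the target. This reduces to the componentwise identity $(\lambda^{\epsilon_i}\zeta_i v_i)^{|k_i|}=\lambda^{k_i}v_i^{|k_i|}$, which follows from $\zeta_i^{|k_i|}=1$ and $\epsilon_i|k_i|=k_i$. Hence $f(\gamma\cdot v)=\lambda\cdot f(v)$ for $\gamma=(\lambda,\zeta_1,\ldots,\zeta_m)$, and $f$ descends to a continuous map $\bar f\co \C^m/\Gamma\to \C^m/\mathbb{T}$. Surjectivity is immediate since every complex number admits $|k_i|$-th roots. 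For injectivity, if $f(v)=t\cdot f(w)$ then componentwise $v_i^{|k_i|}=(t^{\epsilon_i}w_i)^{|k_i|}$, so either $w_i=0$ (forcing $v_i=0$, and any $\zeta_i$ works), or $v_i=\zeta_i\,t^{\epsilon_i}w_i$ for some $|k_i|$-th root of unity $\zeta_i$; assembling these yields $v=(t,\zeta_1,\ldots,\zeta_m)\cdot w$.

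To upgrade $\bar f$ to a homeomorphism, I would note that each coordinate map $z\mapsto z^{|k_i|}$ is open on $\C$, so $f$ itself is open. Writing $q$ and $q'$ for the quotient maps to $\C^m/\mathbb{T}$ and $\C^m/\Gamma$ respectively, the identity $q\circ f=\bar f\circ q'$ together with openness and surjectivity of $q'$ forces $\bar f$ to be open; a continuous open bijection is a homeomorphism.

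For the consequence \ref{eq:M}, apply the main claim to $L\oplus R$ with weights $\pm n$ on the summands $\C^{d_{\pm n}}$ (so $\Delta$ plays the role of $\mathbb{T}$). Then the resulting $\Gamma$ splits as a commuting product of a $\C^*$ acting by scalar multiplication with weight $+1$ on $R$ and weight $-1$ on $L$, together with $\Upsilon=\prod \mu_{|k_i|}$ acting componentwise by roots of unity; hence $\C^m/\Gamma\isom (\C^m/\C^*)/\Upsilon$. Restricting to $(L\oplus R)^*$, where the residual $\C^*$-action is free and proper with Hausdorff orbit space, yields the displayed homeomorphism.

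For the triviality on homology, observe that $\Upsilon\subset(\C^*)^m$, and $(\C^*)^m$ is path-connected; for any $\zeta\in\Upsilon$ choose a continuous path $\zeta(t)$ from $1$ to $\zeta$, and note that each $\zeta(t)$ acts componentwise on $(L\oplus R)^*$ and commutes with the diagonal $\C^*$, hence descends to a self-map of $(L\oplus R)^*/\C^*$. The induced homotopy connects the identity to the action of $\zeta$, so $\zeta$ acts trivially on homology, and hence so does all of $\Upsilon$. I expect the main technical subtlety to lie in the homeomorphism claim of the main lemma, since $\C^m/\mathbb{T}$ need not be Hausdorff when the $k_i$ have mixed signs (orbit closures can overlap along coordinate subspaces), which rules out a properness-based argument for $\bar f^{-1}$; the openness argument above circumvents this difficulty cleanly.
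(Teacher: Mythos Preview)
Your proof is correct and follows essentially the same approach as the paper: you verify the $\Gamma$--invariance of $q\circ f$, check bijectivity of the induced map by a direct orbit computation, and establish the homeomorphism via openness of $f$ and of the quotient maps; the paper records exactly these steps, only more tersely. Your treatment of the homological triviality of $\Upsilon$ via a path in $(\C^*)^m$ is likewise the paper's argument, and your closing remark about non-Hausdorffness explaining why openness rather than properness is the right tool is a nice clarification beyond what the paper states.
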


\begin{proof} It follows from the definitions that the composition $\C^m \srm{f} \C^m \to \C^m/\mathbb{T}$ is invariant under the action of $\Gamma$ in the domain. A short computation shows that the induced map $\C^m/\Gamma \srm{\isom} \C^m/\mathbb{T}$ is bijective, and it is an open map because $f$ and the quotient map $\C^m\to \C^m/\mathbb{T}$ are both open. The homeomorphism (\ref{eq:M}) is obtained by restriction, and the action of $\Upsilon$ is homologically trivial because it extends to the ordinary scalar action of the path-connected group $(\C^*)^{m}$, where $m = \dim_\C (L\oplus R)$.
\end{proof}

\begin{lem}\label{hom-loc-model-lem}
The space $(L\oplus R)^*/\Delta$ has non-trivial rational homology in dimensions
$0, 2, \ldots, 2M, 2M+1, \ldots, 4M+1$, where $M = \dim_{\C} (L) - 1 =  \dim_{\C} (R) - 1$ is the $($complex$)$ dimension of the $($unweighted$)$ projective spaces $\mathbb{P} (L) \isom \mathbb{P} (R)$. In particular, $(L\oplus R)^*/\Delta$ does  \e{not} have the rational homology of a sphere unless $L$ and $R$ are one-dimensional.
\end{lem}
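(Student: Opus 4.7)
The plan is to first reduce via Lemma~\ref{weighted-action-lem} to an unweighted diagonal $\C^*$-action, then identify the resulting quotient geometrically as the $\C^*$-bundle over $\cp^M\times\cp^M$ associated to $\mathcal{O}(-1)\boxtimes\mathcal{O}(-1)$, and finally compute its rational cohomology using the Gysin sequence together with graded Gorenstein duality.

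First, I invoke Lemma~\ref{weighted-action-lem} to replace $(L\oplus R)^*/\Delta$ by $((L\oplus R)^*/\C^*)/\Upsilon$, where $\C^*$ acts by scalar multiplication (weight $-1$ on $L$, weight $+1$ on $R$). Because $\Upsilon$ is finite and acts trivially on rational cohomology, it suffices to compute $H^*(E;\mathbb{Q})$ for $E:=(L^\times\times R^\times)/\C^*$, using $(L\oplus R)^*=L^\times\times R^\times$. Next I identify $E$ geometrically: the $\Delta$-invariant tensor map $(l,r)\mapsto l\otimes r$ descends to a homeomorphism of $E$ with the set of non-zero rank-one tensors in $L\otimes R$, which is precisely the zero-section-deleted total space of $\mathcal{O}(-1)\boxtimes\mathcal{O}(-1)$ over $B:=\mathbb{P}(L)\times\mathbb{P}(R)\isom\cp^M\times\cp^M$. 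Hence $E\to B$ is a principal $\C^*$-bundle with Euler class $e=-(x+y)\in H^2(B;\mathbb{Z})$, where $x,y$ are the two hyperplane classes.

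Since $H^{\mathrm{odd}}(B;\mathbb{Q})=0$, the Gysin sequence of $E\to B$ splits into isomorphisms
\[H^{2k}(E;\mathbb{Q})\isom \mathrm{coker}\!\left(\cup e\co H^{2k-2}(B;\mathbb{Q})\to H^{2k}(B;\mathbb{Q})\right),\]
\[H^{2k+1}(E;\mathbb{Q})\isom \ker\!\left(\cup e\co H^{2k}(B;\mathbb{Q})\to H^{2k+2}(B;\mathbb{Q})\right).\]
I then compute within $H^*(B;\mathbb{Q})\isom \mathbb{Q}[x,y]/(x^{M+1},y^{M+1})$, a graded Gorenstein algebra of socle degree $4M$. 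The total cokernel of multiplication by $e$ is $\mathbb{Q}[x,y]/(x^{M+1},y^{M+1},x+y)\isom\mathbb{Q}[x]/(x^{M+1})$, contributing exactly one dimension in each even degree $0,2,\ldots,2M$. Graded Gorenstein duality (from the perfect pairing $H^k(B)\otimes H^{4M-k}(B)\to H^{4M}(B)\isom\mathbb{Q}$, under which multiplication by $e$ is self-adjoint) then yields $\dim\ker(\cup e\co H^{2k}\to H^{2k+2})=\dim\mathrm{coker}(\cup e\co H^{4M-2k-2}\to H^{4M-2k})$, which is nonzero precisely for $M\leqs k\leqs 2M$. Combining, $H^*(E;\mathbb{Q})$ is non-trivial in exactly the degrees $0,2,\ldots,2M$ and $2M+1,2M+3,\ldots,4M+1$, as asserted. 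Because a rational cohomology sphere has non-trivial cohomology in exactly two degrees, $E$ has the rational homology of a sphere only when this list has cardinality $2$, i.e.\ only when $M=0$.

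The main obstacle I anticipate is correctly pinning down the Euler class $e$. A sloppy handling of the twisted diagonal $\Delta\subset\C^*\times\C^*$ could give $e=0$, which would force $E\simeq\C^*\times B$ and produce non-trivial cohomology in far more degrees than the lemma claims; verifying that quotienting by the character $(\mu_1,\mu_2)\mapsto\mu_1\mu_2$ sums the two Hopf Chern classes is therefore the one non-routine step, after which the Gysin calculation and the graded Gorenstein duality computation on $\mathbb{Q}[x,y]/(x^{M+1},y^{M+1})$ are entirely standard.
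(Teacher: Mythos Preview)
Your proof is correct and follows the same overall architecture as the paper's: reduce via Lemma~\ref{weighted-action-lem} to the unweighted diagonal $\C^*$-action, recognize $(L^\times\times R^\times)/\C^*$ as a $\C^*$-bundle over $\mathbb{P}(L)\times\mathbb{P}(R)$, and read off the homology from the associated long exact sequence. The difference lies only in how you extract information from that bundle. The paper uses the Serre spectral sequence and never identifies the $d_2$ differential explicitly; instead it simply observes that $\dim_{\mathbb{Q}} H_{2k}(\cp^M\times\cp^M)$ is strictly increasing for $0\le k\le M$ and strictly decreasing for $M\le k\le 2M$, so by a rank count the differential $E^2_{2k,0}\to E^2_{2k-2,1}$ must have non-trivial kernel in the first range and non-trivial cokernel into $E^2_{2M+2k,1}$ in the second. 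You instead pin down the Euler class as $-(x+y)$ and compute the kernel and cokernel of cup product by it in the Gorenstein ring $\mathbb{Q}[x,y]/(x^{M+1},y^{M+1})$ via the socle pairing. Your argument carries slightly more machinery (the line-bundle identification and Gorenstein duality) but yields the exact Betti numbers (all equal to $1$), whereas the paper's dimension count only certifies non-vanishing. Either suffices for the lemma; the paper's route is marginally more elementary because it never needs to know what the differential actually is.
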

\begin{proof} All homology groups will be taken with rational coefficients.
Lemma~\ref{weighted-action-lem} shows that
$$H_* ((L\oplus R)^*/\Delta) \isom H_* ((L\oplus R)^*/\C^*),$$
where on the right-hand side, $\C^*$ is acting by scalar multiplication in the vector space $L\oplus R$, with weight $1$ on $R$ and weight $-1$ on $L$. This action is the diagonal of the $(\C^* \cross \C^*)$-action given by scalar multiplication in $L$ and $R$ separately (again with weight $1$ on $R$ and weight $-1$ on $L$). Consider the quotient map
$$(L\oplus R)^*/\C^* \maps (L\oplus R)^*/(\C^*\cross \C^*) = \mathbb{P} (L) \cross \mathbb{P} (R).$$
This map is the quotient map for the natural action of the quotient group $(\C^* \cross \C^*)/\C^* \isom \C^*$, and in particular is a locally trivial $\C^*$--bundle. Since $\C^* \heq S^1$ has homology in dimensions $0$ and $1$ only, the Serre spectral sequence for this bundle has all differentials equal to zero after the second page. The K\"unneth Theorem shows that the homology of $\mathbb{P} (L) \cross \mathbb{P} (R)$ is concentrated in even degrees, and
$$H_{2k}(\mathbb{P} (L) \cross \mathbb{P} (R)) \isom  \mathbb{Q}^{k+1}$$
for $0\leqs k\leqs M$ and 
$$H_{2M+2k}(\mathbb{P} (L) \cross \mathbb{P} (R)) \isom  \mathbb{Q}^{M -k+1}$$
for $0 < k \leqs M$.
This implies that every differential out of the groups 
$$E^2_{2k, 0} \isom H_{2k} (\mathbb{P} (L) \cross \mathbb{P} (R))$$ 
with $0\leqs k \leqs M$ has non-zero kernel, while every differential into the groups 
$$E^2_{2M + 2k, 1} \isom H_{2M+2k} (\mathbb{P} (L) \cross \mathbb{P} (R))$$
with $0  \leqs k \leqs M$ has non-trivial cokernel. Hence $E^\infty_{2k, 0}$ is non-zero for 
$0\leqs k \leqs M$, while $E^\infty_{2M + 2k, 1}$ is non-zero for $0  \leqs k \leqs M$, giving the desired result.
\end{proof}

\begin{lem}\label{homology-lem}
Let $X$ be a space with the rational homology of a point, and consider a $($closed$)$ point $x_0\in X$ such that $H_* (X-\{x_0\}; \mathbb{Q})$ is finitely generated in each degree. If $X\cross \bbR^n - \{(x_0, 0)\}$ has the same rational homology as the sphere $S^k$ for some $k\geqs n-1$, then $X-\{x_0\}$ has the same rational homology as $S^{k-n}$ $($where, by convention, we take $S^{m}$ to be the empty space if $m<0$$)$, and $X\cross \bbR^n - \{(x_0, 0)\}$ cannot have the same rational homology as $S^k$ if $0 \leqs k < n-1$.
\end{lem}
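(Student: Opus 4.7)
The plan is to compute $H_*(U; \mathbb{Q})$, where $U := X \times \bbR^n - \{(x_0, 0)\}$, via the Mayer--Vietoris sequence associated to the open cover
\[U = A \cup B, \quad A := X \times (\bbR^n - \{0\}), \quad B := Y \times \bbR^n,\]
where $Y := X - \{x_0\}$, with intersection $A \cap B = Y \times (\bbR^n - \{0\})$. The homotopy equivalences $A \heq X \times S^{n-1}$, $B \heq Y$, and $A \cap B \heq Y \times S^{n-1}$, together with the K\"unneth isomorphisms (using finite generation of $H_*(Y; \mathbb{Q})$ and the hypothesis $H_*(X; \mathbb{Q}) \isom H_*(\mathrm{pt}; \mathbb{Q})$), give
\[H_p(A; \mathbb{Q}) \isom H_p(S^{n-1}; \mathbb{Q}), \quad H_p(A \cap B; \mathbb{Q}) \isom H_p(Y; \mathbb{Q}) \oplus H_{p-n+1}(Y; \mathbb{Q}),\]
along with $H_p(B; \mathbb{Q}) \isom H_p(Y; \mathbb{Q})$.

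Next I would analyze the Mayer--Vietoris comparison map $\phi_p \co H_p(A \cap B; \mathbb{Q}) \to H_p(A; \mathbb{Q}) \oplus H_p(B; \mathbb{Q})$. Under the K\"unneth identifications, the component landing in $H_p(B) \isom H_p(Y)$ is the projection onto the first summand of $H_p(Y) \oplus H_{p-n+1}(Y)$, while the component landing in $H_p(A) \isom H_p(S^{n-1})$ is induced by $Y \injects X$ on $H_0$ and therefore vanishes outside degrees $0$ and $n-1$. Consequently $\phi_p$ is the identity for $0 < p < n-1$ and a projection onto the first summand for $p > n-1$; only the degrees $p \in \{0, n-1\}$ involve the augmentation $\epsilon \co H_0(Y; \mathbb{Q}) \to H_0(X; \mathbb{Q}) \isom \mathbb{Q}$. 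Inserting the resulting kernels and cokernels (using $\dim \ker \epsilon = \dim H_0(Y; \mathbb{Q}) - 1$ whenever $Y \neq \emptyset$) into the split short exact sequences
\[0 \to \mathrm{coker}\,\phi_p \to H_p(U; \mathbb{Q}) \to \ker \phi_{p-1} \to 0\]
extracted from Mayer--Vietoris then yields
\[\dim H_0(U; \mathbb{Q}) = 1, \qquad \dim H_p(U; \mathbb{Q}) = 0 \text{ for } 1 \leqs p \leqs n-1,\]
\[\dim H_n(U; \mathbb{Q}) = \dim H_0(Y; \mathbb{Q}) - 1, \qquad \dim H_q(Y; \mathbb{Q}) = \dim H_{q+n}(U; \mathbb{Q}) \text{ for } q \geqs 1.\]
The degenerate case $Y = \emptyset$ (which forces $X = \{x_0\}$) reduces directly to $U \heq S^{n-1}$ and gives $k = n-1$ with $S^{k-n} = S^{-1} = \emptyset = Y$.

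Finally I would read off the conclusion. If $H_*(U; \mathbb{Q}) \isom H_*(S^k; \mathbb{Q})$ with $k \geqs n-1$, the first line above forces $k \geqs n$ when $Y \neq \emptyset$; the second line then determines the Betti numbers of $Y$ to be those of $S^{k-n}$ (the case $k = n$ giving $\dim H_0(Y; \mathbb{Q}) = 2$ and higher $H_*$ trivial, and $k > n$ giving $\dim H_0(Y; \mathbb{Q}) = 1$ with a single $\mathbb{Q}$ in degree $k-n$). Conversely, if $0 \leqs k < n - 1$, the same formulas force $\dim H_k(U; \mathbb{Q}) = 0$ for $1 \leqs k \leqs n-2$ and $\dim H_0(U; \mathbb{Q}) = 1 \neq 2 = \dim H_0(S^0; \mathbb{Q})$, so $U$ cannot have the rational homology of $S^k$ in that range.

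The main obstacle will be keeping the bookkeeping clean in the low-degree Mayer--Vietoris terms, particularly tracking how the augmentation $\epsilon$ enters the K\"unneth decomposition at $p = n-1$; once this is set up correctly, the remaining computation is formal linear algebra.
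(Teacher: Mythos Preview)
Your proof is correct, and it takes a somewhat different route from the paper. The paper argues by induction on $n$: it establishes the $n=1$ case via Mayer--Vietoris (with the same cover you use), and then for general $n$ writes $X\times\bbR^n-\{(x_0,0)\}=(X\times\bbR^{n-1})\times\bbR-\{(x_0,0,0)\}$ and peels off one $\bbR$ factor at a time, invoking the $n=1$ case at each step. By contrast, you run the Mayer--Vietoris computation once for arbitrary $n$ with the cover $A=X\times(\bbR^n-\{0\})$, $B=Y\times\bbR^n$, and extract the Betti numbers of $U$ in terms of those of $Y$ directly.

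Your approach is more streamlined: it avoids the inductive bookkeeping and the separate treatment the paper gives to the cases $k=0$ and $k=m-2$ in the induction step. A small bonus is that your argument does not really rely on the finite generation hypothesis on $H_*(Y;\mathbb{Q})$: the short exact sequences you extract from Mayer--Vietoris give genuine isomorphisms $H_{q+n}(U;\mathbb{Q})\cong H_q(Y;\mathbb{Q})$ for $q\geqs 1$, whereas the paper deduces $H_k(Y)=0$ from an injection $H_k(Y)\oplus H_k(Y)\hookrightarrow H_k(Y)$, which requires finite dimensionality. (K\"unneth over a field needs no finiteness assumption, so your invocation of finite generation there is unnecessary.) The paper's inductive approach, on the other hand, makes the logical structure of the ``shift by one'' very transparent and avoids having to track the augmentation $\epsilon$ in two separate degrees simultaneously.
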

\begin{proof} We prove the statement by induction on $n$. Again, all homology groups are rational.

For $n=0$ the statement is immediate. Next, consider the case $n=1$, and say $X\cross \bbR - \{x_0, 0\}$ has the rational homology of $S^k$ for some $k\geqs 1$ (we will consider the case $k=0$ separately).  Consider the Mayer--Vietoris sequence associated to the decomposition 
$$X\cross \bbR - \{(x_0, 0)\} = (X\cross (\bbR - \{0\})\cup ((X-\{x_0\}) \cross \bbR).$$ 
Note the homotopy equivalences $X\cross (\bbR - \{0\}) \heq X\coprod X$ and $(X-\{x_0\}) \cross \bbR\heq X-\{x_0\}$. These open sets intersect in 
$$(X-\{x_0\}) \cross (\bbR - \{0\}) \heq (X-\{x_0\})\coprod (X-\{x_0\}).$$
The Mayer-Vietoris sequence now has the form (in part)
\begin{eqnarray*}
0\maps H_k (X - \{x_0\}) \oplus  H_k (X - \{x_0\}) \srm{i} H_k (X-\{x_0\}) \maps H_{k} (S^k) = \mathbb{Q}\hspace{.3in}\\
\maps H_{k-1} (X - \{x_0\}) \oplus  H_{k-1} (X - \{x_0\}) \maps H_{k-1} (X-\{x_0\}) \maps 0.
\end{eqnarray*}
 The map $i$ must be injective, which implies that $ H_k (X - \{x_0\}) = 0$ (here we use the assumption that $H_k (X - \{x_0\})$ is finitely generated). The remaining short exact sequence 
$$0\maps  \mathbb{Q}
\maps H_{k-1} (X - \{x_0\}) \oplus  H_{k-1} (X - \{x_0\}) \maps H_{k-1} (X-\{x_0\}) \maps 0$$
 shows that $H_{k-1} (X - \{x_0\}) \isom \mathbb{Q}$, as desired. Similar (in fact, simpler) reasoning gives $\wt{H}_{*} (X - \{x_0\}) = 0$ for $*\neq k-1$, as desired. Now say $k=0$, so that $X\cross \mathbb{R} - \{(x_0, 0)\}$ has the rational homology of $S^0$. 
We claim that in this case we must have  $X = \{x_0\}$, in which case the statements of the lemma hold trivially.  Indeed, when $k =0$, the above Mayer--Vietoris sequence has the form
 \begin{eqnarray*}
  H_0 (X - \{x_0\}) \oplus  H_0 (X - \{x_0\}) \injects H_0 (X-\{x_0\}) \oplus H_0 (X) \oplus H_0 (X) \surjects H_{0} (S^0),
\end{eqnarray*}
and since $H_0 (X) = \mathbb{Q}$, we find that $H_0 (X - \{x_0\}) = \{0\}$, meaning that $X - \{x_0\}$ is empty.
 This proves the lemma for $n=1$ (note that in this case the last statement of the lemma is vacuous).
  
Now we assume the statements of the lemma for $n=0, 1, \ldots, m-1$ and consider the case for $n=m$ ($m\geqs 2$). 

Say $X$ has the rational homology of a point, and assume $X\cross \bbR^m - \{(x_0, 0)\}$  has the same rational homology as $S^k$ for some $k\geqs m-1$.
We have 
$$X\cross \bbR^m - \{(x_0, 0)\} = (X \cross \bbR^{m-1})\cross \bbR - \{(x_0, 0, 0)\}.$$ 
Note that $Y:=X\cross \bbR^{m-1} \heq X$ has the rational homology of a point, and $Y - \{(x_0, 0)\}$ has finitely generated rational homology by a Mayer--Vietoris argument similar to that above. The induction hypothesis for $n=1$ implies that $X\cross \bbR^{m-1} - \{(x_0, 0)\}$ has the same rational homology as $S^{k-1}$, and the induction hypothesis for $n=m-1$ shows that $X-\{x_0\}$ has the same rational homology as $S^{k-1-(m-1)} = S^{k-m}$, as desired.
We also need to show that $X\cross \bbR^m - \{(x_0, 0)\}$ cannot have the homology of $S^k$ with $0 \leqs k < m-1$. If it did, then writing 
$$X\cross \bbR^m - \{(x_0, 0)\} = (X\cross \bbR) \cross \bbR^{m-1} - \{(x_0, 0, 0)\},$$
the induction hypothesis for $n=m-1$ gives a contradiction if $0\leqs k < m-2$, and it remains to show that  
$X\cross \bbR^m - \{(x_0, 0)\}$ cannot have the homology of $S^{m-2}$. If it did, then writing 
$$X\cross \bbR^m - \{(x_0, 0)\} = (X\cross \bbR^{m-1}) \cross \bbR - \{(x_0, 0, 0)\},$$
then the induction hypothesis for $n=1$ shows that $X\cross \bbR^{m-1} - \{(x_0, 0)\}$ has the homology of $S^{m-3}$, and repeating this argument we eventually find that 
$X\cross \bbR -  \{(x_0, 0)\}$ has the homology of the empty space $S^{-1}$, but this is impossible as $X\cross \bbR -  \{(x_0, 0)\}$ is non-empty.
\end{proof}

\begin{lem}\label{weighted-lem}
Let $d_{-N},\dots,d_N$ be non-negative integers such that $d_n=d_{-n}$. Define the action of $\mathbb{C}^*$ on $\mathbb{C}^{d_n}$ by $\lambda\cdot v=\lambda^n v$. Then 
\[  \left(\bigoplus_{n=-N}^{N}\mathbb{C}^{d_n}\right)\aq\mathbb{C}^*\isom \mathbb{C}^{d_0} \cross 
\left(\bigoplus_{n\neq 0}\mathbb{C}^{d_n}\right)\aq\mathbb{C}^*
\]
has a topological singularity at $[0]$ if and only if $\sum_{n\geqs 1} d_n>1$. 
\end{lem}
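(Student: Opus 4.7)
The plan is to reduce the problem to a homological computation on the punctured quotient, then invoke Lemmas \ref{weighted-action-lem}, \ref{hom-loc-model-lem}, and \ref{homology-lem} in combination.

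First, split off the trivial summand: setting $V = \bigoplus_{n\neq 0}\C^{d_n}$, we have $W\aq\C^* \cong \C^{d_0}\cross (V\aq\C^*)$ since $\C^*$ acts trivially on $\C^{d_0}$. The scaling action $(t,v)\mapsto tv$ of $[0,1]$ on $V$ descends to a strong deformation retract of $V\aq\C^*$ onto $\{[0]\}$, so $V\aq\C^*$ has the rational homology of a point. Lemma \ref{homology-lem} then applies with $X = V\aq\C^*$ and $n = 2d_0$: if $W\aq\C^*$ were locally homeomorphic to Euclidean space at $(0,[0])$, then the rational homology of $V\aq\C^* \sm \{[0]\} = V^*/\C^*$ would necessarily be that of a sphere (or else $V^*/\C^*$ would be empty, a case handled below).

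For the ``only if'' direction, if $\sum_{n\geqs 1}d_n = 0$ then $V = 0$ and the quotient $W\aq\C^* \cong \C^{d_0}$ is smooth. If $\sum_{n\geqs 1}d_n = 1$, then there is a unique $n_0\geqs 1$ with $d_{n_0} = d_{-n_0} = 1$, so $V \cong \C_{n_0}\oplus \C_{-n_0}$; writing $x, y$ for coordinates of weights $n_0, -n_0$, a weight count gives $\C[x,y]^{\C^*} = \C[xy]$, hence $V\aq\C^* \cong \C$ and $W\aq\C^* \cong \C^{d_0+1}$, which is globally homeomorphic to Euclidean space.

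For the ``if'' direction, assume $\sum_{n\geqs 1}d_n > 1$. Set $L = \bigoplus_{n\geqs 1}\C^{d_{-n}}$ and $R = \bigoplus_{n\geqs 1}\C^{d_n}$, so that $V = L\oplus R$ as vector spaces and $\dim_\C L = \dim_\C R = \sum_{n\geqs 1}d_n > 1$. Applying Lemma \ref{weighted-action-lem} to the weighted action on $V$ (one weight $n$ per coordinate of $\C^{d_n}$) and restricting to the polystable locus, we obtain a homeomorphism
\[V^*/\C^* \cong \bigl((L\oplus R)^*/\Delta\bigr)/\Upsilon,\]
where $\Delta \cong \C^*$ acts diagonally on $L\oplus R$ with weights $-1$ on $L$ and $+1$ on $R$, and $\Upsilon$ is a finite group whose action on the rational homology of $(L\oplus R)^*/\Delta$ is trivial (since it extends to an action of the connected group $(\C^*)^{\dim V}$). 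Since a finite group acting trivially on rational homology induces an isomorphism on rational homology of the quotient,
\[H_*(V^*/\C^*;\mathbb{Q}) \cong H_*\bigl((L\oplus R)^*/\Delta;\mathbb{Q}\bigr),\]
which by Lemma \ref{hom-loc-model-lem} is not that of any sphere, because $M = \dim_\C L - 1 \geqs 1$. By the reduction in the first step, $[0]$ is therefore a topological singularity in $W\aq\C^*$.

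The main obstacle is the clean application of Lemma \ref{weighted-action-lem}: one needs to match the polystable locus for the original weighted $\C^*$ action on $V$ with that for the diagonal $\pm 1$ action on $L\oplus R$. This follows by checking on both sides that $v$ is polystable if and only if both its positive-weight and negative-weight components are non-zero, so the map $f$ of Lemma \ref{weighted-action-lem} restricts to a homeomorphism on punctured quotients and the homological argument goes through.
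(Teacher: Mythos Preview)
Your argument is correct and follows essentially the same route as the paper: reduce to the punctured quotient, show its rational homology is not that of a sphere via Lemmas~\ref{weighted-action-lem} and~\ref{hom-loc-model-lem}, and derive a contradiction using Lemma~\ref{homology-lem}. Two minor points: first, the step from ``$W\aq\C^*$ is locally Euclidean at $[0]$'' to ``$W\aq\C^* - \{[0]\}$ has the homology of a sphere'' is left implicit; the paper spells this out via excision and the long exact sequence of the pair, using that the contracting homotopy makes $W\aq\C^*$ rationally acyclic. Second, your use of $\Delta$ for the $\pm 1$--weight action clashes with the paper's convention (where $\Delta$ denotes the original weighted action), and your separate invocation of Lemma~\ref{weighted-action-lem} is somewhat redundant, since Lemma~\ref{hom-loc-model-lem} already applies directly to the weighted quotient $(L\oplus R)^*/\Delta = V^*/\C^*$ and incorporates that reduction in its proof.
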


\begin{proof} First, say $\sum_{n\geqs 1} d_n>1$. 
Let $V = \bigoplus_{n=-N}^{N}\mathbb{C}^{d_n}$ and let $X = V\aq \C^*$. The standard contracting homotopy of $V$, namely $H_t (v) = tv$, induces a contracting homotopy of $X$, so in particular $X$ has the homology of a point.
Moreover, Lemma \ref{hom-loc-model-lem} shows that the rational homology of $\left(\bigoplus_{n\neq 0}\mathbb{C}^{d_n}\right)\aq\mathbb{C}^*$ is finitely generated, and is not that of a sphere.
By Lemma \ref{homology-lem}, the homology of $X-\{[0]\}$ is not that of a sphere either. However, if $X$ had a Euclidean neighborhood $U \isom \mathbb{R}^k$ around $[0]$, we would have 
$$H_* (X, X - \{[0]\}) \isom H_* (\mathbb{R}^k, \mathbb{R}^k - \{0\})$$
by excision, and by comparing the long exact sequences of these pairs we find that $H_* (X - \{[0]\}) \isom H_* ( \mathbb{R}^k - \{0\}) \isom H_* (S^{k-1})$. Hence $X$ has a topological singularity at $[0]$.

Conversely, if $\sum_{n\geqs 1} d_n=1$, then $(\C\oplus\C^{d_0}\oplus \C)\aq \C^*\cong \C^{d_0}
\cross (\C \oplus \C)\aq\C^*$, where on the right $\C^*$ acts with weights $-n$ and $n$ on the left and right factors (respectively). The multiplication map $\C\cross \C\to \C$ induces a homeomorphism
$ (\C \oplus \C)\aq\C^* \srm{\isom} \C$, so we find that $(\C\oplus\C^{d_0}\oplus \C)\aq \C^*$ is homeomorphic to $\C^{d_0+1}$ in this case.
\end{proof}

\subsection{Algebraic and Topological Singularities}

Richardson showed the singular locus of $\X_r(G)$ is precisely the union of the bad locus with the reducible locus if $G$ is semisimple, $r\geqs 2$ and the Lie algebra of $G$ does not have any rank 1 simple factors \cite{Ri}.  \cite[Theorem 7.4]{FLR} extends Richardson's theorem to connected reductive groups $G$ such that the simple factors of the Lie algebra of its derived subgroup $DG$ have rank 2 or more (addressing Conjectures 3.34 and 4.8 in \cite{FL2}).

In this subsection, we generalize these results by showing that if $r\geqs 3$ and $G$ is connected and reductive (but allowing its derived subgroup to have local rank 1 factors), then the singular locus coincides with the union of the bad locus with the reducible locus.  We also generalize results in \cite{FL2}, by showing all algebraic singularities in $\X_r(G)$ are in fact topological (ugly); which shows that $\X_r(G)$, as a class of varieties, have properties in common with normal surfaces (\cite{Mumford}).

\begin{rem}\label{rankexamples}
\cite[Remark 3.33]{FL2} shows that $\XC{2}(\p\SL_2(\C))$ has smooth points which are reducible and singular points which are irreducible; so a condition on the rank of $G$ when $r=2$ is necessary. \cite[Examples 7.2 and 7.3]{FLR} show that there are Lie groups $H$ of arbitrarily large rank with the property that $\XC{2}(H)$ has smooth reducibles and singular irreducibles (and $H$ does not have to be a product with a rank 1 Lie group for this to happen; although $H$ does need to be a local product with a rank 1 Lie group). 
\end{rem}

\begin{thm}\label{conj-thm}Let $G$ be a connected, reductive $\C$-group.  Assume either $r\geqs 2$ and the Lie algebra of $DG$ has no rank 1 simple factors, or that $r\geqs 3$ with no additional conditions on $G$. Then $$\XC{r}(G)^{sing}=\XC{r}(G)^{red}\cup\X_r(G)^{bad},$$ and all points in $\XC{r}(G)^{bad}$ are orbifold singularities.
\end{thm}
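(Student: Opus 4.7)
My plan is to use Luna's Slice Theorem (Lemma~\ref{sing-lem}) to reduce the analysis of $[\rho]\in \X_r(G)$ at a polystable representative $\rho$ to its local model $V\aq \stab(\rho)$, where $V := H^1(\F_r;\mathfrak{g}_{\Ad_\rho})$, and then to partition polystable representations into three disjoint classes: good irreducible, bad, and reducible. The easy inclusion $\X_r(G)^{sing}\subseteq \X_r(G)^{red}\cup \X_r(G)^{bad}$ is immediate because for a good irreducible $\rho$ the stabilizer is trivial, so the local model is the smooth vector space $V$.

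For the bad case, $\rho$ is irreducible and hence stable by \cite[Theorem 4.1]{Ri}, so Proposition~\ref{commsemi} together with the definition of ``bad'' makes $\stab(\rho)$ a nontrivial finite subgroup of $PG$. Proposition~\ref{nopseudo} says each nontrivial element of $\stab(\rho)$ fixes a subspace of $V$ of codimension at least two, ruling out pseudoreflections; the Chevalley--Shephard--Todd theorem then forces $V\aq \stab(\rho)$ to be algebraically singular at $[0]$. Since $\stab(\rho)$ is finite, this exhibits $[\rho]$ as an orbifold singularity, proving both statements of the theorem for the bad locus simultaneously.

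For the reducible case, the hypothesis ``$r\geqs 2$ and no rank-$1$ simple factor in $\mathrm{Lie}(DG)$'' is the content of \cite[Theorem 7.4]{FLR}, so the new work is the case $r\geqs 3$. Because $\X_r(G)^{sing}$ is Zariski closed, it suffices to verify singularity on a Zariski dense subset of $\X_r(G)^{red}$; I will take polystable $\rho$ whose image is Zariski dense in a maximal proper Levi $L\subset G$, so that $\stab(\rho) = Z(L)/Z(G)$ has identity component a torus $T$ of positive dimension. The Levi decomposition $\mathfrak{g} = \mathfrak{l}\oplus \mathfrak{u}\oplus \mathfrak{u}^-$ is $L$-stable and yields $V = H^1(\F_r;\mathfrak{l})\oplus H^1(\F_r;\mathfrak{u})\oplus H^1(\F_r;\mathfrak{u}^-)$, and a cocharacter $\lambda\co \C^*\injects T$ in the interior of the Weyl chamber of $P$ acts with weight zero on the first summand and strictly positive (respectively negative) weights on the second (respectively third), paired as $d_n = d_{-n}$ by Killing duality. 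Since $L$ is proper, $\mathfrak{u}\neq 0$, and
\[
\sum_{n\geqs 1} d_n \;=\; \dim H^1(\F_r;\mathfrak{u}) \;\geqs\; (r-1)\dim \mathfrak{u} \;\geqs\; 2
\]
whenever $r\geqs 3$, so Lemma~\ref{weighted-lem} yields a topological singularity of $V\aq \C^*$ at $[0]$.

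The main obstacle will be promoting this topological singularity of the $\C^*$-quotient to a singularity of the full local model $V\aq \stab(\rho)$, since $\stab(\rho)/T$ can be a nontrivial finite group and finite quotients can occasionally resolve singularities. My strategy is to exhibit algebraic non-smoothness of $V\aq \stab(\rho)$ directly in the invariant ring: the $L$-equivariant (hence $\stab(\rho)$-equivariant) Killing pairing between $\mathfrak{u}$ and $\mathfrak{u}^-$ supplies a family of bilinear $\stab(\rho)$-invariant functions on $H^1(\F_r;\mathfrak{u})\times H^1(\F_r;\mathfrak{u}^-)$ whose matrix of evaluations has functional rank one, and these bilinear invariants are minimal generators of $\C[V]^{\stab(\rho)}$ because $T$ has no non-zero invariants on $V_+\oplus V_-$ in degree one. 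When $\dim H^1(\F_r;\mathfrak{u})\geqs 2$, the resulting $2\times 2$-minor identities are nontrivial relations among minimal generators, so $\C[V]^{\stab(\rho)}$ is not a polynomial ring and $V\aq \stab(\rho)$ is algebraically singular at $[0]$. Combined with Zariski closedness of $\X_r(G)^{sing}$, every reducible point is singular, completing the proof.
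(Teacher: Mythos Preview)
Your treatment of the good and bad loci matches the paper's closely. For bad $\rho$ you invoke Chevalley--Shephard--Todd where the paper invokes \cite[Lemma 4.4]{NR}; these are equivalent here, since Proposition~\ref{nopseudo} already guarantees that every nontrivial element of the finite group $\stab(\rho)$ has fixed locus of codimension $\geqs 2$ (in particular acts nontrivially), so $\stab(\rho)$ acts faithfully with no pseudoreflections and CST applies.

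For the reducible locus with $r\geqs 3$ you take a genuinely different route from the paper. The paper does not analyze the local model at a reducible point at all in the proof of Theorem~\ref{conj-thm}; instead it reduces to the simply connected cover $\widetilde{DG}=\prod_i G_i$, uses the product decomposition $\X_r(\prod_i G_i)\cong\prod_i\X_r(G_i)$, cites the simple-group cases (\cite[Theorem 3.21]{FL2}, \cite[Corollary 7.9, Theorem 7.4]{FLR}), and then lifts back to $G$ via \cite[Theorem 7.8]{FLR}. Your approach---pass to quasi-irreducible $\rho$, compute the local model, and apply Lemma~\ref{weighted-lem}---is essentially the argument the paper gives for the \emph{stronger} Theorem~\ref{redugly-thm} (that reducibles are ugly), so it is a legitimate and more self-contained alternative.

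However, your ``main obstacle'' is a phantom, and your proposed workaround is the weak point. For $\rho$ with image Zariski dense in a Levi $L$ of a \emph{maximal} proper parabolic, one computes (see the proof of Theorem~\ref{redugly-thm}) that $Z_G(L)/Z(G)\cong\C^*$ exactly: the centralizer is generated by $Z(G)$ and the one-parameter subgroup $t\mapsto\exp(tH_i)$ dual to the deleted simple root. There is no finite extension to worry about, so Lemma~\ref{weighted-lem} applies directly to $V\aq\stab(\rho)$ and gives a topological (hence algebraic) singularity at $[0]$ whenever $\sum_{n\geqs 1}d_n\geqs 2$. Your invariant-theoretic fix, by contrast, is not correct as stated: the ``functional rank one'' and ``$2\times 2$ minor'' claims describe the invariant ring of the weight-$\pm 1$ action, but in the mixed-weight situation the minimal bilinear invariants pairing $\mathfrak{u}_n$ with $\mathfrak{u}_{-m}$ for $n\neq m$ do not exist, and the actual generators and relations are more complicated (e.g.\ cubic generators like $x_1^2 y_2$ when weights $1,2,-1,-2$ occur). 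You could salvage the idea by restricting to the weight-$\pm 1$ summands, which have dimension $(r-1)\dim\mathfrak{u}_1\geqs r-1\geqs 2$, but it is simpler just to observe that $\stab(\rho)=\C^*$ and invoke Lemma~\ref{weighted-lem}.
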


\begin{proof}
We prove the theorem in the following steps:
\begin{enumerate}
 \item[$(1)$] $\XC{r}(G)^{red}\subset \XC{r}(G)^{sing}$,
 \item[$(2)$] $\X_r(G)^{bad}\subset \XC{r}(G)^{sing}$, and all points in $\XC{r}(G)^{bad}$ are orbifold singularities,
 \item[$(3)$] $\XC{r}(G)^{good}=\mathcal{X}_r (G)$.
\end{enumerate}

If $r\geqs 2$ and the Lie algebra of $DG$ has only simple factors of rank 2 or more, then this is the content of Richardson \cite{Ri} in the case with $G$ is semisimple, and \cite[Theorem 7.4]{FLR} in the case when $G$ is connected and reductive.  So we now assume $r\geqs 3$.

The second part of (2) follows from the first part of (2) since $\X_r(G)^{irr}$ is always an orbifold and $\X_r(G)^{good} \subset \mathcal{X}_r (G)$ is always a smooth manifold.  This also shows that (3) follows from (1) and (2). 

(1): Let $G$ be a connected, reductive $\C$-group, $DG$ the derived subgroup $[G,G]$, and $\widetilde{DG}$ the universal cover of $DG$.  Then $\widetilde{DG}=\prod_i G_i$ is a finite product of simple Lie groups and \cite[Proposition 2.9]{FL4} says $\X_r(\prod_i G_i)\cong \prod_i\X_r(G_i)$. Now, for $r\geqs 3$ and $G$ is simple, we have $\X_r(G)^{red}\subset \X_r(G)^{sing}$: for $\mathrm{Rank}(G)\geq 2$ this follows by \cite[Theorem 7.4]{FLR}, and for $\mathrm{Rank}(G) = 1$ it follows by\cite[Theorem 3.21]{FL2} and \cite[Corollary 7.9]{FLR}.  Observe that $[\rho]\in\X_r(\prod_i G_i)\cong \prod_i\X_r(G_i)$ is reducible if and only if it is reducible in some $\X_r(G_i)$, and likewise it is singular if and only if it is singular in some $\X_r(G_i)$. 

Thus,  if $r\geqs 3$, we have $\X_r(\widetilde{DG})^{red}\subset \X_r(\widetilde{DG})^{sing}$. However, \cite[Theorem 7.8]{FLR} proves that if $\X_r(\widetilde{DG})^{red}\subset \X_r(\widetilde{DG})^{sing}$, then $\X_r(G)^{red} \subset \X_r(G)^{sing}$, proving item (1).

(2): We will use the following lemma from \cite{NR}:

\begin{lem}{\cite[Lemma 4.4]{NR}}\label{NR-lem} Let $f:X\to Y$ be a morphism from an $n$-dimensional complex manifold onto a normal $n$-dimensional variety $Y$.  If the set $S$ of points at which $f$ is not locally injective is of codimension at least 2, then $f(S)$ is the singular set of $Y$. 
\end{lem}

Let $[\rho]\in \X_r(G)$ be bad. Then $\rho$ is polystable since $\rho$ is irreducible and all irreducible representations are stable. By Lemma \ref{sing-lem}, there is a local model of $[\rho]\in\X_r(G)$ of the form $V\aq\Gamma:=H^1(\F_r;\mathfrak{g}_{\Ad_\rho})\aq \mathrm{Stab}(\rho)$, where $\Gamma:=\mathrm{Stab}(\rho)$ is the finite stabilizer of $\rho$ in $PG$.  We note that $V$ has the same dimension as $\X_r(G)$ since $\rho$ is irreducible, and since $\Gamma$ is finite the dimension of $V\aq\Gamma$ is also equal to $\X_r(G)$.

Since $\Gamma$ is finite, every point in $V$ where $\pi_\Gamma:V\to V\aq \Gamma$ is not locally injective has a non-trivial stabilizer (in fact, the reverse implication also holds) and so Proposition \ref{nopseudo} allows us to apply Lemma \ref{NR-lem}. We thus conclude that $[0]$ is singular in $V\aq \Gamma$ and therefore $[\rho]$ is singular in $\X_r(G)$ by Lemma \ref{sing-lem}. Thus, the first part of item (2) holds, and as noted above the theorem follows. 
\end{proof}

\begin{rem}
We note that item $(1)$ in the above proof  resolves the first part of \cite[Conjecture 3.34]{FL2}.  We note (again) that \cite[Examples 7.2 and 7.3]{FLR} show when $r=2$,  item $(1)$ may be false if $DG$ locally has rank 1 factors, so item $(1)$ in Theorem \ref{conj-thm} is sharp and cannot be generally improved.
\end{rem}

As shown in \cite{FL2}, if $G=\SL_n(\C)$ or $\GL_n(\C)$ then generally reducible representations are ugly.  We now generalize this result.

\begin{thm}\label{redugly-thm}
All reducible representations are ugly if $r\geqs 3$ for any connected, reductive $\C$-groups $G$, or if $r\geqs 2$ and the Lie algebra of $DG$ has only simple factors of rank 2 or more.
\end{thm}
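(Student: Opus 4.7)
The plan is to combine Lemma~\ref{sing-lem} with Lemma~\ref{weighted-lem} applied to ``generic'' reducible representations, then use closedness of the ugly locus to extend ugliness to every reducible.

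First, I would observe that the ugly locus in $\XC{r}(G)$ is closed in the analytic topology: if $[\rho]$ admits a neighborhood $U$ homeomorphic to $\bbR^{2d}$ with $d=\dim_\C \XC{r}(G)$, then Invariance of Domain forces every point of $U$ to have a Euclidean neighborhood, so non-ugly is open. It therefore suffices to prove ugliness for a dense subset of $\XC{r}(G)^{red}$. I will call a polystable reducible $\rho$ \emph{generic} when $\rho(\F_r)$ is Zariski dense in some Levi $L$ of a maximal proper parabolic $P$ of $G$. Proposition~\ref{zd-prop} applied to $L$ shows Zariski dense representations into $L$ are Euclidean dense there, and a perturbation argument (using that any polystable reducible factors through a proper Levi contained in a maximal one) shows that generic reducibles are Euclidean dense in $\XC{r}(G)^{red}$. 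For such $\rho$, Zariski density yields $Z_G(\rho)=Z_G(L)=Z(L)$, so $\mathrm{Stab}(\rho)=Z(L)/Z(G)$ has identity component $\C^*\isom Z(L)^0/Z(G)$; any finite component group fits into the framework of Lemma~\ref{weighted-action-lem}, so I may assume $\mathrm{Stab}(\rho)\isom \C^*$.

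By Lemma~\ref{sing-lem} it suffices to show $[0]$ is a topological singularity in $V\aq \C^*$, where $V=H^1(\F_r;\mathfrak{g}_{\Ad_\rho})$. The adjoint action of this $\C^*$ decomposes $\mathfrak{g}=\mathfrak{l}\oplus\bigoplus_{n\neq 0}\mathfrak{g}_n$, and the Killing form provides the duality $\mathfrak{g}_n\isom \mathfrak{g}_{-n}^*$, so $\dim_\C\mathfrak{g}_n=\dim_\C\mathfrak{g}_{-n}$. Each non-zero weight space $\mathfrak{g}_n$ is a non-trivial irreducible $L$-module, and Zariski density of $\rho(\F_r)$ in $L$ gives $\mathfrak{g}_n^{\F_r}=0$, whence
\[d_n:=\dim_\C H^1(\F_r;\mathfrak{g}_n)=(r-1)\dim_\C \mathfrak{g}_n\]
for each $n\neq 0$, with $d_n=d_{-n}$. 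By Lemma~\ref{weighted-lem}, ugliness at $[0]$ reduces to verifying
\[\sum_{n\geqs 1}d_n\;=\;(r-1)\dim_\C\mathfrak{n}^+\;>\;1,\]
where $\mathfrak{n}^+:=\bigoplus_{n\geqs 1}\mathfrak{g}_n$ is the nilpotent radical of $P$.

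For $r\geqs 3$, the inequality is immediate since $\dim_\C\mathfrak{n}^+\geqs 1$ gives $(r-1)\dim_\C\mathfrak{n}^+\geqs 2$. For $r\geqs 2$ with the Lie algebra of $DG$ having no rank-$1$ simple factors, I would verify using Table~\ref{Levi} that every Levi of a maximal parabolic in a simple Lie algebra of rank $\geqs 2$ satisfies $\dim_\C\mathfrak{n}^+\geqs 2$; the extremal case is $S(\GL_2\times\GL_1)\subset \SL_3$, where $\dim_\C\mathfrak{n}^+=2$, and the bound only improves through the classification. In either setting Lemma~\ref{weighted-lem} applies, so generic reducibles are ugly, and closedness of the ugly locus transfers ugliness to every reducible. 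The main obstacle I anticipate is twofold: establishing the density of generic reducibles at every point of $\XC{r}(G)^{red}$ (especially at representations factoring through very small Levis), and ensuring that the finite component group of $\mathrm{Stab}(\rho)$ is indeed absorbed by the framework of Lemma~\ref{weighted-action-lem} so that the topological analysis of $V\aq \mathrm{Stab}(\rho)$ legitimately reduces to the pure $\C^*$-quotient.
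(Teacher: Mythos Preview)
Your approach is essentially the same as the paper's: close the ugly locus, reduce to representations Zariski dense in a Levi $L$ of a maximal proper parabolic, identify the local model as a weighted $\C^*$-quotient, and invoke Lemma~\ref{weighted-lem}. The dimension computation $d_n=(r-1)\dim_\C\mathfrak{g}_n$ and the case split on $r$ are identical in spirit.

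Two remarks on the points you flag as obstacles. First, the worry about a finite component group of $\mathrm{Stab}(\rho)$ does not arise: the paper carries out an explicit computation with the dual basis $(H_1,\dots,H_m)$ to the simple roots and shows that $Z_G(L)/Z(G)$ is \emph{exactly} $\C^*$, generated by the one-parameter subgroup $t\mapsto\exp(tH_i)$ (here $\mathfrak{p}=\mathfrak{p}_i$). So you never need to fold a finite group into Lemma~\ref{weighted-action-lem}; that lemma is tailored to a very specific product of roots-of-unity groups and would not obviously absorb an arbitrary component group. Second, your assertion that each $\mathfrak{g}_n$ is an \emph{irreducible} $L$-module is false in general (already for $\SL_4$ the nilpotent radical of a maximal parabolic need not be $L$-irreducible), but it is harmless: what you actually need, and what the paper uses, is only $\mathfrak{g}_n^{\rho(\F_r)}=0$, which follows from Zariski density in $L$ together with the fact that the Cartan subgroup of $L$ already has no non-zero fixed vectors on $\mathfrak{g}_n$ for $n\neq 0$.
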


\begin{proof}
First note that if a point $[\rho]\in\X_r(G)$ is not ugly, then there exists a Euclidean open set around $[\rho]$ that is homeomorphic to a Euclidean ball.  Thus, all points in that Euclidean set are also not ugly.  And so the collection of non-ugly points in $\X_r(G)$ is an open set; that is, being ugly is a closed condition in $\X_r(G)$.  Since the conjugate of an ugly representation is still ugly (as it gives the same point in $\X_r(G)$), we conclude that the ugly locus in $\hom(\F_r,G)$ is also closed.

This alone shows that all reducibles are ugly in $\X_r(\SL_n(\C))$ if $r\geqs 3$ and $n\geqs 2$ or if $n\geqs 3$ and $r\geqs 2$, since \cite{FL2} shows that a Euclidean dense set of reducibles is ugly in these cases.  We now generalize this to arbitrary $G$.

Let $G$ be a connected, reductive $\C$-group. We say that $L$ is {\it quasi-irreducible} if it is a Levi subgroup of a maximal parabolic subgroup of $G$. Let $\rho :\F_r \to G$ be a representation. We say that $\rho$ is {\it quasi-irreducible} if its Zariski-closure is quasi-irreducible.
(This definition can be viewed as a generalization of the reducible representations shown to be ugly in \cite{FL2}; namely the representations in $\SL_n(\mathbb{C})$ conjugate to a representation having two non-trivial irreducible blocks.)

Up to conjugation we have only finitely many maximally non-irreducible subgroups in $G$ (at most the number of conjugacy classes of maximal parabolic subgroups of $G$; that is, the rank of $G$). 

We claim that quasi-irreducible representations are generic.  Precisely, the subset of quasi-irreducible representations $\hom^{qi}(\F_r,G)$ is Euclidean dense in the (constructible) subset of polystable, reducible representations in $\hom (\F_r, G)$.
This simply comes from the fact that completely reducible and non-irreducible representations $\rho:\F_r \to G$ have to be contained  in a parabolic subgroup of $G$ and therefore a maximal parabolic subgroup. Since they  are also completely reducible they factor through the inclusion of a maximally non-irreducible subgroup of $G$. Now the result follows from the general fact that Zariski-dense representations of a free group of rank at least 2 in a connected reductive group are Euclidean dense in the representation variety of that reductive group by Proposition \ref{zd-prop}.

Now, we prove that quasi-irreducible representations are ugly. First let us begin with the Zariski-closure of $\rho(\F_r)$, which we denote by $L$. Let us show that $Z_G(L)/Z(G)$ is isomorphic to $\mathbb{C}^*$. 

Fix a Cartan subgroup $H$ of $G$. Up to conjugation we may assume that $H$ is a subgroup of  a Levi subgroup $L$   of the maximal parabolic subgroup $P$. Since $H$ contains the center of $G$, we may write its Lie algebra as $\mathfrak{z}\oplus \mathfrak{h}$, where $\mathfrak{z}$ is the center of $\mathfrak{g}$.
Let $\mathfrak{l} \subset \mathfrak{p} \subset \mathfrak{g}$ denote the Lie algebras of $L$ and $P$.

Recall that we have the following decomposition:
\[\mathfrak{g}=\mathfrak{z}\oplus \mathfrak{h}\oplus \bigoplus_{\alpha\in \Delta}\mathfrak{g}_{\alpha},\]
where as usual, $\Delta$ is the set of generalized eigenvalues of $\mathfrak{h}$ (these are linear forms on $\mathfrak{h}$) and $\mathfrak{g}_{\alpha}$ are the corresponding generalized eigenspaces of dimension $1$. We fix a non-zero vector $X_{\alpha}$ in each. We recall that we may choose in $\Delta$ a set of simple roots $\alpha_1,\dots, \alpha_m$ so that any root is a positive sum of these $\alpha_i$ or a negative sum of these $\alpha_i$. Define $(H_1,\dots,H_m)$ to be the dual basis in $\mathfrak{h}$ to $(\alpha_1,\dots,\alpha_m)$ in  $\mathfrak{h}^*$. 

For a fixed $\alpha_i$, we may construct:
\[\mathfrak{p}_i:=\mathfrak{z}\oplus\mathfrak{h}\oplus \bigoplus_{\alpha\in \Delta^+}\mathfrak{g}_{\alpha}\oplus \bigoplus_{\alpha\in \Delta^-\cap\langle \alpha_j\mid j\neq i\rangle}\mathfrak{g}_{\alpha}.\]
Up to conjugation, the Lie algebras of maximal parabolic subgroups are of the form above. Thus we may assume that $\mathfrak{p}=\mathfrak{p}_i$. Finally, if we define 
\[\mathfrak{l}_i:=\mathfrak{z}\oplus\mathfrak{h}\oplus \bigoplus_{\alpha\in \Delta\cap\langle \alpha_j\mid j\neq i\rangle}\mathfrak{g}_{\alpha},\]
then $\mathfrak{l}_i$ is conjugate to the Lie algebra of each Levi subgroup of  $P$. Thus we may assume that $\mathfrak{l}=\mathfrak{l}_i$. 

Now note that $P=\mathrm{Rad}_U(P)\rtimes L$ and since both $P$ and its unipotent radical $\mathrm{Rad}_U(P)$ need to be connected, $L$ is connected as well. It follows that for $x\in G$, $x\in Z_G(L)$ if and only if $x\in Z_G(\mathfrak{l})$. 

Furthermore, $L$ contains the Cartan subgroup $H$, which is its own centralizer. It follows that if $x\in Z_G(L)$ then $x\in Z_G(H)=H$. Therefore, we may write any element $x\in Z_G(L)$ as $x=z\exp_G(\lambda_1H_1+\cdots+\lambda_m H_m)$ where $\lambda_1,\dots,\lambda_m\in \mathbb{C}$ and $z\in Z(G)$. 

Finally, any $x$ of this form will commute with $\mathfrak{h}$ and for $\alpha=n_1\alpha_1+\cdots+n_m\alpha_m\in \Delta$, we have 
\[\Ad(x)\cdot X_{\alpha}=\exp (\alpha(\lambda_1H_1+\cdots+\lambda_mH_m))X_{\alpha}=\exp(n_1\lambda_1+\cdots+n_m\lambda_m)X_{\alpha}.\]
Thus in the case $\alpha = \alpha_j$, we see that in order to have $\mathrm{Ad}(x)\cdot X_{\alpha_j}=X_{\alpha_j}$, we need $\lambda_j\in 2\pi \sqrt{-1}\mathbb{Z}$. 

It follows that $x=z\exp_G(X) \in \mathfrak{h}$ belongs to $Z_G(L)$ if and only if $X\equiv \lambda H_i$ in $\mathfrak{h}/\langle 2\pi\sqrt{-1}H_j\mid 1\leq j\leq r\rangle_{\mathbb{Z}}$, where $\langle 2\pi\sqrt{-1}H_j\mid 1\leq j\leq r\rangle_{\mathbb{Z}}$ is the additive subgroup of $\mathfrak{h}$ generated by the elements $2\pi\sqrt{-1}H_j$.

It follows that $Z_G(L)$ is generated by $Z(G)$ and a $1$-parameter subgroup $t\mapsto \exp(tH_i)$, whence $Z_G(L)/Z(G)$ is isomorphic to $\mathbb{C}^*$. 

We now denote for $n\neq 0$,
\[\mathfrak{u}_n:=\bigoplus_{\substack{\alpha\in \Delta\\ \alpha=n\alpha_i+\sum_{j\neq i}n_j\alpha_j}}\mathfrak{g}_{\alpha}.\]

Then it is clear that 
\[\mathfrak{g}=\mathfrak{l}\oplus \bigoplus_{n\neq 0}\mathfrak{u}_n.\]
Furthermore, with respect to $\mathbb{C}^*\cong Z_G(L)/Z(G)$, $\mathbb{C}^*$ acts trivially on $\mathfrak{l}$ and for $\lambda\in \mathbb{C}^*$ and $v\in \mathfrak{u}_n$, we have $\lambda\cdot v=\lambda^n v$. 

Now $\mathrm{Stab}(\rho)\leq PG$ is exactly $Z_G(L)/Z(G)\cong\mathbb{C}^*$, so the infinitesimal action on $H^1(\F_r;\mathfrak{g}_{\Ad_\rho})$ is given by the action (as above) on the corresponding coefficients of: 
\[H^1(\F_r;\mathfrak{g}_{\Ad_\rho})=H^1(\F_r;\mathfrak{l}_{\Ad_\rho})\oplus \bigoplus_{n\neq 0}H^1(\F_r;(\mathfrak{u}_n)_{\Ad_\rho}),\]
that is, $\mathbb{C}^*$ acts trivially on $H^1(\F_r;\mathfrak{l}_{\Ad_\rho})$ and acts on $H^1(\F_r;(\mathfrak{u}_n)_{\Ad_\rho})$ by $\lambda\cdot v=\lambda^nv$. 

By Lemma \ref{sing-lem}, $[\rho]$ is ugly if and only if $[0]$ is a topological singularity in $$H^1(\F_r;\mathfrak{g}_{\Ad\rho})\aq\mathrm{Stab}(\rho),$$ and we have established that, for some $N>0$,  
$$H^1(\F_r;\mathfrak{g}_{\Ad_\rho})\aq \mathrm{Stab}(\rho)\cong \left(\bigoplus_{n=-N}^{N}\mathbb{C}^{d_n}\right)\aq\mathbb{C}^*\cong \C^{d_0}\cross \left(\bigoplus_{n\neq 0}\C^{d_{n}}\right)\aq\mathbb{C}^*,$$ as in Lemma \ref{weighted-lem} below (we include  $H^1(\F_r;\mathfrak{l}_{\Ad_\rho})$ as the factor $\C^{d_0}$).

Now by direct computation we have:
\begin{align*}
\dim_\C H^1(\F_r;(\mathfrak{u}_n)_{\Ad_\rho})&=\dim_\C Z^1(\F_r,(\mathfrak{u}_n)_{\Ad_\rho})-\dim_\C B^1(\F_r,(\mathfrak{u}_n)_{\Ad_\rho})\\
&=\dim_\C Z^1(\F_r,(\mathfrak{u}_n)_{\Ad_\rho})-\dim_\C \mathfrak{u}_n+\dim_\C Z^0(\F_r,(\mathfrak{u}_n)_{\Ad_\rho})\\
&=r\dim_\C \mathfrak{u}_n-\dim_\C \mathfrak{u}_n+\dim_\C \mathfrak{u}_n^{\rho(\F_r)}\\
&=(r-1)\dim_\C \mathfrak{u}_n,\end{align*}
because the Zariski-closure of $\rho(\F_r)$ contains a Cartan subgroup of $G$ which has no non-zero fixed point on $\mathfrak{u}_n$. Since $\dim_\C \mathfrak{u}_n\geqs 1$, Lemma \ref{weighted-lem} finishes the proof so long as $r\geqs 3$. 

Finally, if $r=2$, it suffices to prove that  $\sum_{n\geq 1}\dim_\C \mathfrak{u}_n\geqs 2$ provided that $\mathfrak{g}$ has no simple factor of rank $1$. This is equivalent to the fact that $\mathfrak{g}$ has at least two positive roots. One may easily check this for any simple complex Lie algebra $\mathfrak{g}$ which is not $\mathfrak{sl}_2$ and therefore we may again apply Lemma \ref{weighted-lem} to complete the proof. 
\end{proof}

\begin{rem}
The first paragraph in the proof of Theorem \ref{redugly-thm}, that the ugly locus is closed, proves \cite[Conjecture 4.8]{FL2} to be true since \cite{FL2} shows that a Euclidean dense set of reducibles is ugly in the cases considered in \cite[Conjecture 4.8]{FL2}.
\end{rem}

\begin{rem}
We note that with respect to the actual application of Lemma \ref{weighted-lem} to Theorem \ref{redugly-thm}, that $d_1\geqs 1$ since there is always a simple root with eigenvalue $1$.  Moreover, the situation when $\sum_{n\geqs 1} d_n=1$ does in fact occur in $$\hom(\F_2,\SL_2(\C))\aq \SL_2(\C)\cong \C^3,$$ see \cite{FL2}.
\end{rem}

A priori it is not clear if bad representations are ugly or not.  Such a representation has a local model that is a finite group quotient of affine space.  By the Chevalley-Shephard-Todd Theorem \cite{ShTo,Che}, the quotient is smooth if and only if the finite group is generated by pseudoreflections.  But such a quotient, even if singular, can be topologically a manifold as the next example shows.  

\begin{exam}\label{starr-ex}
Let $\Gamma$ be the binary icosahedral group; that is, the group of symmetries of the icosahedron $($this group is of order 120, and is isomorphic to $\mathrm{SL}_2 (\mathbb{F}_5)$$)$.  The rotational symmetries  of the icosahedron  are naturally a subgroup of $\SO(3)$ and $\Gamma$ is the inverse image of this subgroup under the double covering $\SU(2) \to \SO(3)$. We call the inclusion 
$$\Gamma\injects \SU(2) \injects \GL_2 (\C)$$ 
 the ``standard representation.'' Now consider the homomorphism $\alpha:\Gamma\to \GL_3(\C)$ equal to the direct sum of the standard representation of $\Gamma$ with a trivial representation.  Then $\alpha$ defines an action on $\C^3$, and the quotient $\C^3/\Gamma$ is a normal, topological manifold that is algebraically singular \cite[Jason Starr's example]{MOsing}.
\end{exam}

We now show that bad representations are ugly if $r\geqs 3$, which implies the situation of the above example does not occur for the varieties $\X_r(G)$.  

\begin{thm}\label{badugly-thm}
Let $G$ be a connected, reductive $\C$-group, and assume either $r\geqs 3$, or $r\geqs 2$ and the Lie algebra of $DG$ has only simple factors of rank 2 or more. Let $[\rho]\in \X_r(G)$. If $\rho$ is bad, then $\rho$ is ugly.
\end{thm}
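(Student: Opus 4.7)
The plan is to reduce to an analysis of the link of $[0]$ in the Luna local model, after cutting down to the case of abelian stabilizer. By Proposition \ref{ab-gen} together with the observation (made at the start of the proof of Theorem \ref{redugly-thm}) that the ugly locus is closed in $\X_r(G)$, it suffices to treat bad $\rho$ whose stabilizer $\Gamma := \mathrm{Stab}(\rho) \leqs PG$ is a nontrivial finite abelian group (finiteness comes from Proposition \ref{commsemi}). Lemma \ref{sing-lem} then reduces the problem to showing that $[0]$ is a topological singularity in $V\aq\Gamma$, where $V = H^1(\F_r; \mathfrak{g}_{\Ad\rho})$ with its natural linear $\Gamma$-action; equivalently, using a $\Gamma$-invariant Hermitian inner product on $V$ to write $V\aq\Gamma$ as the open cone on $L := S(V)/\Gamma$, it suffices to show that $L$ is not homeomorphic to $S^{2n-1}$, where $n = \dim_{\C} V \geqs 2$.

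Splitting off the fixed part, $V\aq\Gamma \cong V^\Gamma \times (W\aq\Gamma)$ where $W$ is the sum of the non-trivial isotypic components, so we may assume $V^\Gamma = 0$. By Proposition \ref{nopseudo}, every non-identity $\xi \in \Gamma$ has $V^\xi$ of complex codim $\geqs 2$ in $V$, i.e., $\Gamma$ acts without pseudoreflections. I would then split into two cases. \emph{If some} $\xi \in \Gamma \setminus \{1\}$ has $V^\xi = 0$, then $\xi$ acts freely on $S(V)$; by Armstrong's theorem on fundamental groups of orbit spaces, $\pi_1(L) = \Gamma/N$ where $N$ is generated by the point-stabilizers on $S(V)$, and $\xi\notin N$ shows $\pi_1(L)\neq 1$, so $L$ is not a sphere. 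This case covers every cyclic $\Gamma$, since a cyclic group cannot be exhausted by the kernels of the nontrivial characters appearing in a faithful representation. \emph{Otherwise}, $\Gamma$ is non-cyclic abelian, every non-identity element fixes a nonzero vector, and $\pi_1(L) = 1$; here one must exhibit torsion in some $H_k(L;\Z)$ with $0 < k < 2n-1$. For this, I would use the isotypic decomposition of $V$ and the product decomposition of $\Gamma$ into cyclic factors to obtain a product decomposition of $W\aq\Gamma$; each factor is a cyclic quotient singularity handled by the first case, and a K\"unneth/join argument for $L$ (together with the codim-$\geqs 2$ bound, which rules out degenerate low-dimensional character configurations) forces the required torsion.

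The main obstacle is the non-cyclic abelian case, where the fundamental group contains no information. The no-pseudoreflection conclusion of Proposition \ref{nopseudo} — and especially its stronger form (complex codim $\geqs 4$) available when $r\geqs 3$ — is precisely what rules out the ``topological smoothing'' phenomena (double suspension / Cannon--Edwards, as in Example \ref{starr-ex}) by which certain non-abelian finite quotients of $\C^n$ become topologically Euclidean: the absence of any complex-codim-$1$ fixed locus blocks the iterated joins with low-dimensional spheres that underlie those counterexamples. Establishing the required torsion in higher integer homology for a general non-cyclic abelian $\Gamma$, from the codimension bounds of Proposition \ref{nopseudo} alone, is the technically delicate step of the argument.
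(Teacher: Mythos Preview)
Your reduction to the abelian-stabilizer case via Proposition~\ref{ab-gen}, closedness of the ugly locus, and Lemma~\ref{sing-lem} is correct and matches the paper. The gaps lie in both of your subsequent cases.

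\textbf{Case 1.} The implication ``$V^\xi=0 \Rightarrow \xi\notin N$'' is false: Armstrong's $N$ is the subgroup \emph{generated} by elements with fixed points, not their union. For $\Gamma=\Z/6$ acting on $\C^4$ via $\chi^2\oplus\chi^2\oplus\chi^3\oplus\chi^3$ (faithful, $V^\Gamma=0$, no pseudoreflections), the generator $\xi$ acts freely on $S^7$, yet $N=\langle \xi^2,\xi^3\rangle=\Gamma$, so $\pi_1(S^7/\Gamma)=1$. Thus your Case~1 does not even cover all cyclic $\Gamma$.

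\textbf{Case 2.} The quotient $W\aq\Gamma$ is \emph{not} in general a product of cyclic quotient singularities. For $\Gamma=(\Z/2)^2$ acting on $\C^3$ via its three nontrivial characters (each $V^\xi$ one-dimensional, so no pseudoreflections), one finds $\C[W]^\Gamma \cong \C[u_1,u_2,u_3,w]/(w^2-u_1u_2u_3)$, which does not split; your K\"unneth/join strategy has no foothold.

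The paper bypasses both difficulties with a single Alexander-duality argument. Rather than analyzing the link $S(V)/\Gamma$, it removes the \emph{entire} fixed locus $F=\bigcup_{\xi\neq 1} V^\xi$. Since $\codim_\R F\geqs 4$ (Proposition~\ref{nopseudo}), $V-F$ is simply connected and $\Gamma$ acts freely there, giving $H_1\big((V-F)/\Gamma;\Z\big)=\Gamma^{ab}=\Gamma\neq 0$. If $V/\Gamma$ were Euclidean, then $(V/\Gamma)^+\cong S^{2n}$ and Alexander duality would yield $H_1\big((V/\Gamma)^+ - (F/\Gamma)^+\big)\cong H^{2n-2}\big((F/\Gamma)^+\big)$; but $\dim_\R F/\Gamma\leqs 2n-4$ forces the right side to vanish. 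The idea you are missing is to trade the link for the complement of $F/\Gamma$ in the compactified quotient: this uniformly extracts $\Gamma$ itself as an $H_1$ obstruction, independent of whether any single element acts freely or whether the representation decomposes.
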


\begin{proof}
We repeat the set-up from Lemma \ref{sing-lem}.  Let $[\rho]\in \X_r(G)$ be bad; we assume that $\rho$ is polystable (in fact, stable).  There is a local model of $[\rho]$ of the form $V\aq \Gamma$, where $V:=H^1(\F_r;\mathfrak{g}_{\Ad_\rho})$ is a $\C$-vector space, $[0]$ corresponds to $[\rho]$, and $\Gamma:=\mathrm{Stab}(\rho)$ is a finite subgroup of $PG$ acting linearly (and so fixes $0$).

Since the ugly locus is closed and the bad locus is closed, it suffices to show that generic bad representations are ugly.  The set of bad representations with abelian stabilizer is generic by Proposition \ref{ab-gen}, so we will assume $\Gamma$ is abelian.  
Under the assumptions of the theorem, Proposition \ref{nopseudo} now shows that the action of $\Gamma$ on $V$ does not include any pseudoreflections.  We also note that if $\Gamma$ does not act effectively (faithfully), then $\Delta:=\cap_{v\in V}\stab_\Gamma(v)$ is a normal subgroup and $V\aq\hat{\Gamma}\cong V\aq \Gamma$ where $\hat{\Gamma}:=\Gamma/\Delta$. Since we have shown that bad representations are singular, we know $\Gamma$ cannot act trivially on $V$.  So without loss of generality, we may assume $\Gamma$ is a non-trivial abelian group, acting effectively.

Let $F \subset V$ denote the set of vectors fixed by some non-trivial element of $\Gamma$; this is a union of linear subspaces, invariant under the action of $\Gamma$. Since $\Gamma$ does not act by pseudoreflections, $F$ has codimension at least 2.

We now prove that $V\aq \Gamma$ has a topological singularity at $[0]$.  In what follows we replace $\aq$ with $/ $ since $\Gamma$ is finite and hence these quotients are equivalent.  Say $\dim_\C (V) = n$. For a space $X$, let $X^+$ denote its one-point compactification. Then $V^+ \homeo S^{2n}$, and since $F$ is a union of linear subspaces, $F^+$ is  a union of spheres (of dimension at most $2n-4$).

We claim $(\star)\ F^+/\Gamma \isom (F/\Gamma)^+$ is locally contractible, and $(\star\, \star)$ has no integral homology in dimensions greater than $2n-3$.  We prove $(\star)$ and $(\star\, \star)$ below, but first let us see how they are used.

Since $\Gamma$ acts freely on the complement of $F$, the quotient map $V - F \to V/\Gamma - F/\Gamma$ is a covering map, and $V - F$ is simply connected because $F$ is a union of smooth submanifolds of $V$, each with real codimension greater than 2. So $\pi_1 (V/\Gamma - F/\Gamma) = \Gamma$, and since $\Gamma$ is abelian, $H_1 (V/\Gamma - F/\Gamma) = \Gamma$ as well.

We have $(V/\Gamma)^+ - (F/\Gamma)^+ = V/\Gamma - F/\Gamma$. Assume $F \neq \{0\}$; the case $F = \{0\}$ is easier and treated at the end. If $V/\Gamma$ is a topological manifold around $[0]$, then by Proposition~\ref{sing-lem}, $V/\Gamma$ is homeomorphic to a Euclidean space; by dimension count,
we must in fact have
$V/\Gamma\homeo V$,  and so $(V/\Gamma)^+ \homeo S^{2n}$. By $(\star)$, we may apply Alexander Duality, yielding:
$$H_1 (V/\Gamma - F/\Gamma) \cong H_1 ((V/\Gamma)^+ - (F/\Gamma)^+) \cong H^{2n-2} ((F/\Gamma)^+).$$ This contradicts the $(\star\, \star)$, since the left hand side is non-zero but the right hand side is zero.

If $F = \{0\}$, then $\Gamma$ acts freely on $V - \{0\} \heq S^{2n-1}$ and so $\pi_1 (V/\Gamma - \{[0]\})\isom \Gamma$. This is also a contradiction, because $V/\Gamma - \{[0]\} \homeo \C^n - \{0\} \heq S^{2n-1}$.

Therefore, we have shown that if $\rho$ is a bad representation such that $\stab(\rho)$ is abelian and acts on $V$ without pseudoreflections, then $\X_r (G)$ has a topological singularity at $[\rho]$.  Since such representations form a generic subset of the bad locus, we have shown that bad representations are ugly (under the hypotheses of the theorem).

Proof of $(\star)$: Local contractibility at all points other than $\infty$ is immediate since $F/\Gamma$ is an algebraic subset of $V/\Gamma$, hence triangulable (see \cite{Hofmann}).

Let $|-|$ be a $\Gamma$-invariant norm on $V$ (as in the proof of Lemma~\ref{sing-lem}). We claim that $U_N:=F^+ \cap \{v\in V : |v| > N\}$ admits a $\Gamma$-equivariant deformation retraction to $\infty\in F^+$, which then descends to a deformation retraction of $U_N/\Gamma$ to $[\infty]\in F^+/\Gamma$ (giving the desired contractible neighborhoods around $[\infty]$). The deformation retraction is just given by sending $v \mapsto (1/(1-t))v$ at time $t$. This is $\Gamma$-equivariant since $\Gamma$ acts linearly.

Proof of $(\star\, \star)$: We study the homology of $(F/\Gamma)^+$ using the Mayer-Vietoris sequence for the open cover consisting of $F/\Gamma$ and $(F/\Gamma)^+ - \{[0]\}$. The latter set is contractible (as proven in the previous paragraph), so it will suffice to show that $F/\Gamma$ and $$F/\Gamma \cap ( (F/\Gamma)^+ - \{[0]\} ) = F/\Gamma - \{[0]\}$$ each have no integral homology in dimensions greater than $2n-4$. By the Universal Coefficient Theorem, it is enough to check this in cohomology. Both spaces are locally contractible, so their integral cohomology agrees with their \v{C}ech cohomology, and thus it is enough to verify that these spaces have topological covering dimension at most $2n-4$ (see \cite{gentop} for a discussion \v{C}ech cohomology and covering dimension).

But $F$ is a simplicial complex of dimension at most $2n-4$, and finite quotients do not increase covering dimension (\cite[Proposition 9.2.16]{Pears}) so $F/\Gamma$ also has covering dimension at most $2n - 4$, as does its open subspace $F/\Gamma- \{[0]\}$. This completes the proof of $(\star\, \star)$.
\end{proof}

We note that the binary icosahedral group has trivial abelianization, so the arguments in the proof of  Theorem~\ref{badugly-thm} do not apply to Example~\ref{starr-ex}.

\begin{rem}
In \cite{FLR}, it is shown that $\pi_2(\X_r(G))=0$ if $DG$ has its Lie algebra isomorphic to a product of special linear groups.  Given the results in Section \ref{Schur} this shows that if $G$ is a CI group, then $\pi_2(\X_r(G))=0$.

As in the proof of the above theorem, let $[\rho]\in \X_r(G)$ be bad and consider a local model of $[\rho]$ of the form $V\aq \Gamma$, where $V$ is a $\C$-vector space, $[0]$ corresponds to $[\rho]$, and $\Gamma$ is a finite subgroup of $PG$ acting linearly. The Topological Form of Zariski's Main Theorem $($\cite{Redbook}$)$ implies any path-connected open neighborhood around $[0]$ has its smooth part $($the complement of the singular locus$)$ path-connected.  Let $S$ be the singular locus in $V\aq \Gamma$ $($which has codimension at least 2 by normality$)$, and let $p:V\to V\aq \Gamma$ be the quotient map $($which has path-lifting by \cite{LR}$)$.  By Proposition \ref{nopseudo} the set of points where $\Gamma$ does not act locally injective is codimension at least 2, and so by Lemma \ref{NR-lem}, $p^{-1}(S)$ is this collection.

Let $U$ be a path-connected open neighborhood of $[0]$.  Since $\{0\}=p^{-1}([0])$ and $[0]\in U$, we conclude that $p^{-1}(U)$ is path-connected and so by codimension $W:=p^{-1}(U-S)$ is path-connected too.  Since $p$ is a quotient map, $W$ is saturated and so $\Gamma$ acts on $W$. It acts freely since we have removed the points that map to singularities.  Since $\Gamma$ is finite and so discrete, we have that $p:W\to U-S$ is a non-trivial covering map.  Thus, $U-S$ cannot be simply connected. 

Thus, the proof in \cite{FLR} that $\pi_2(\X_r(\SL_n(\C)))=0$ cannot generalize to other reductive $\C$-groups $G$ if $G$ is not CI since the proof in \cite{FLR} required the existence of neighborhoods, in particular around bad representations, whose smooth locus was simply connected $($which we just showed is impossible if $G$ is not CI since in that case there will always be bad representations$)$.
\end{rem}

\section{Homotopy Groups of Good Representations}

In this section we compute the homotopy groups of the good locus of $\X_r(G)$, 
in a range of dimensions tending to infinity with $r$, 
when $G$ is a connected, reductive $\C$-group and when $r\geqs 2$. As per our previous results, in most of these cases the smooth locus $\mathcal{X}_r(G)$ is equal to the good locus $\X_r(G)^{good}=\hom(\F_r,G)^{good}/G$.

\begin{rem}
We exclude the $r=1$ case, since if $r=1$ and $G$ is non-abelian, $\X_1(G)\cong T/W\times_F Z$ where $T$ is a maximal torus in $DG$, $W$ is its Weyl group, $Z$ is the center in $G$, and $F=DG\cap Z$.  In this case, $Z$ is a complex torus and $T/W$ is contractible.  Moreover, the irreducible locus and hence the good locus is empty in this case. 

When $G$ is abelian, the topology is also understood. In this case, $\X_r(G)\cong (\C^*)^{rn}$ since $G\cong (\C^*)^n$, and all representations are good. The results in this section are trivially true when $G$ is abelian, and so in our proofs we only treat the non-abelian case.
\end{rem}

We begin by reviewing some lemmata in \cite{FLR, BL} that we will find useful.

\begin{lem}[Lemma 4.4 in \cite{FLR}]\label{nullhomotopy} Let $G$ be a connected Lie group, and assume $r\geqs 2$.  Then for each $\rho \in G^r$, the map $PG\to G^r$ given by $[g]\mapsto [g]\rho[g]^{-1}$ is nullhomotopic.
\end{lem}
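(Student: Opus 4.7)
The plan is to exploit path-connectedness of $G^r$ together with the fact that conjugation by any element of $G$ fixes the identity in every coordinate, so that an endpoint homotopy of the target vector through the identity induces a nullhomotopy of the orbit map.

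More precisely, since $G$ is connected (hence path-connected as a Lie group), the product $G^r$ is path-connected, so I would first choose a continuous path $\gamma \co [0,1] \to G^r$, written coordinatewise as $\gamma(t) = (\gamma_1(t), \ldots, \gamma_r(t))$, with $\gamma(0) = \rho$ and $\gamma(1) = (e, \ldots, e)$. Then I would define
\[
H \co PG \times [0,1] \maps G^r, \qquad H([g], t) = \bigl(g\gamma_1(t)g^{-1}, \ldots, g\gamma_r(t)g^{-1}\bigr).
\]

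The next step is to verify that $H$ is well defined on $PG \times [0,1]$: any $z \in Z(G)$ satisfies $z\gamma_i(t)z^{-1} = \gamma_i(t)$ for every $t$ and every $i$, so the expression depends only on the coset $[g] \in PG = G/Z(G)$. Continuity of $H$ follows from continuity of $\gamma$ together with continuity of multiplication and inversion in $G$, and the endpoint conditions give $H([g], 0) = [g]\rho[g]^{-1}$ and $H([g], 1) = (e, \ldots, e)$, exhibiting the required nullhomotopy.

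There is essentially no obstacle: the only input used is path-connectedness of $G$ (indeed one may observe that the hypothesis $r \geqs 2$ is not needed for the nullhomotopy itself and is included for consistency with the rest of the section, where representation varieties of rank at least $2$ free groups are studied). I would therefore present the argument as a short direct construction rather than introducing any additional machinery.
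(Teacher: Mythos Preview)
Your argument is correct: the homotopy $H([g],t)=g\gamma(t)g^{-1}$ is well-defined on $PG$ and connects the orbit-inclusion map to the constant map at the identity tuple. Your observation that the hypothesis $r\geqs 2$ plays no role in the nullhomotopy itself is also correct.

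Note, however, that the paper does not supply its own proof of this lemma; it simply quotes the statement as Lemma~4.4 of \cite{FLR}. So there is no in-paper argument to compare against. Your construction is the standard one and is essentially what one finds in \cite{FLR} as well: connectedness of $G$ lets you slide the base point $\rho$ to the identity, where conjugation becomes trivial.
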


\begin{lem}[Lemma 2.2 in \cite{BL}]\label{lem22BL}
Let $G$ be a connected, reductive $\C$-group, and assume $r\geqs 2$. Then $$\hom(\F_r,G)^{good}\to \XC{r}(G)^{good}$$
is a principal $PG$-bundle.
\end{lem}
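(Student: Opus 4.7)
The plan is to combine the defining property of good representations with a slice-theoretic argument in order to extract local trivializations in the analytic topology. The starting point is that $\rho \in \hom(\F_r,G)^{good}$ means $G_\rho = Z(G)$, so the induced action of $PG = G/Z(G)$ on $\hom(\F_r,G)^{good}$ is free; moreover, every irreducible representation is stable by \cite[Theorem 4.1]{Ri}, so good representations are polystable, and the $PG$-action on the good locus is proper (as recorded in the Preliminaries, following \cite{JM}). Finally, $\hom(\F_r,G)^{good}$ is a smooth complex manifold, being a Zariski open subset of $G^r$.

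Given these ingredients, the cleanest route is to invoke the classical theorem that a smooth, free, and proper action of a Lie group on a smooth manifold turns the orbit projection into a principal bundle. Applied to the $PG$-action on $\hom(\F_r,G)^{good}$, this immediately yields the principal $PG$-bundle structure on $\hom(\F_r,G)^{good} \to \XC{r}(G)^{good}$.

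For a more self-contained proof, I would instead apply Luna's Slice Theorem directly at a point $\rho \in \hom(\F_r,G)^{good}$. This produces a $Z(G)$-invariant affine slice $S_\rho$ such that the map $\mu\co G \times_{Z(G)} S_\rho \to \hom(\F_r,G)$ is \'etale with open image, and the induced morphism $S_\rho \aq Z(G) \to \XC{r}(G)$ is \'etale. Since $Z(G)$ acts trivially on everything in sight, the associated bundle simplifies to $PG \times S_\rho$ and $S_\rho \aq Z(G) = S_\rho$. Because \'etale morphisms are local homeomorphisms in the analytic topology, $\mu$ is a local homeomorphism at $(e,\rho)$; using freeness and properness of the $PG$-action, I would shrink $S_\rho$ so that $\mu$ restricts to a $PG$-equivariant homeomorphism from $PG \times S_\rho$ onto an open $PG$-saturated neighborhood $U$ of $\rho$. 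Dividing by $PG$ identifies $U/PG$ with $S_\rho$, yielding the desired local trivialization.

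The main subtlety is the shrinking step: one needs to ensure that nearby $PG$-orbits meet the slice in exactly one point, so that $\mu$ becomes globally injective on $PG \times S_\rho$ after passing to a suitable neighborhood. This is precisely the content of the free-and-proper slice theorem, and it is where the hypotheses established in the Preliminaries do the real work. Once the local trivializations are in place, they automatically assemble into a principal $PG$-bundle structure since $PG$ acts freely and the quotient topology on $\XC{r}(G)^{good}$ agrees with the subspace topology inherited from $\XC{r}(G)$ by \cite{Lu3}.
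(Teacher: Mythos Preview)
The paper does not supply its own proof of this lemma; it is simply quoted from \cite{BL}, so there is nothing in the paper to compare against beyond the surrounding setup in the Preliminaries. Your argument is correct: freeness of the $PG$-action is exactly the defining condition of the good locus, properness follows because good representations are stable and the $PG$-action on the stable locus is proper (as recorded in the Preliminaries, following \cite{JM}), and the good locus is a smooth manifold since it is Zariski open in $G^r$. The classical theorem that a smooth, free, proper Lie group action has a principal-bundle quotient then finishes the job. Your alternative via Luna's slice theorem is also valid and closer in spirit to what \cite{BL} actually does, though the free-and-proper route is the more economical path given the facts already assembled in this paper.
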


Lemma \ref{nullhomotopy} is proven by considering a path $\rho_t$ from $\rho$ to the trivial representation. Lemma \ref{lem22BL} follows from the fact that the conjugation action of $PG$ on the good locus is free and proper.

Now, we put together the main theorem from a previous section (Theorem \ref{codimbad}) with a generalization of \cite[Theorem 2.9]{FLR}.

\begin{lem}\label{lem-gencodim}
Let $G$ be a connected, reductive $\C$-group, and assume $r\geqs 2$.  Then 
$$\mathrm{codim}_\C\hom(\F_r,G)^{red}\geqs (r-1)\mathrm{Rank}(DG).$$
\end{lem}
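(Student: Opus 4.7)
The plan is to cover the reducible locus by finitely many closed subvarieties indexed by maximal proper parabolic subgroups and to apply Lemma~\ref{codim} to each piece. Every reducible representation factors through a proper parabolic, hence through a maximal one, and there are only finitely many conjugacy classes of maximal proper parabolics (in bijection with non-empty proper subsets of nodes of the Dynkin diagram of $\mathfrak{g}$). Thus
\[\hom(\F_r,G)^{red}=\bigcup_P\mathcal{H}_P,\]
where $P$ ranges over representatives of these conjugacy classes and $\mathcal{H}_P$ is the image of the map $\varphi_P\co G\times\hom(\F_r,P)\to\hom(\F_r,G)$ from Lemma~\ref{codim}. The codimension of a finite union is the minimum of the codimensions of its pieces, so the task reduces to bounding $\codim_\C\mathcal{H}_P$ from below for each $P$. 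Lemma~\ref{codim} immediately gives $\codim_\C\mathcal{H}_P\geqs(r-1)\codim_\C P$, so it is enough to show $\codim_\C P\geqs\mathrm{Rank}(DG)$ for every maximal proper parabolic $P$ in $G$.

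Next I would reduce to the simple case. Using the central isogeny decomposition $G\cong DG\times_F T$ and the fact that the central torus $T$ lies inside every parabolic of $G$, maximal parabolics of $G$ correspond bijectively to maximal parabolics of $DG$ with the same codimension, so we may assume $G$ is semisimple. Lifting through the universal cover $\widetilde{DG}\cong G_1\times\cdots\times G_n$ (each $G_i$ simple), which preserves codimensions, any maximal proper parabolic becomes a product $G_1\times\cdots\times P_i\times\cdots\times G_n$ in which exactly one factor $P_i<G_i$ is a maximal proper parabolic; hence $\codim_\C P=\codim_{G_i}P_i$. It is therefore enough to verify that every maximal proper parabolic of a simple group $G_i$ has codimension at least $\mathrm{Rank}(G_i)$.

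This last step is the core computational content and the chief obstacle; it is handled case-by-case using the classification tabulated in Table~\ref{Levi}. For type $A_n$ the smallest generalized flag variety is $\mathbb{P}^n$, of dimension $n=\mathrm{Rank}(A_n)$; for $B_n$ and $C_n$ the minimum has dimension $2n-1\geqs n$; for $D_n$ with $n\geqs 3$ the quadric of dimension $2n-2$ exceeds the rank $n$; and for the exceptional types $G_2, F_4, E_6, E_7, E_8$ the minimum codimensions $5, 15, 16, 27, 57$ comfortably exceed the respective ranks $2, 4, 6, 7, 8$. Combining this per-factor bound with the reductions above yields the desired inequality.
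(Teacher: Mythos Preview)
Your overall strategy matches the paper's: cover the reducible locus by conjugates of maximal proper parabolics, apply Lemma~\ref{codim} to get $\codim_\C\mathcal{H}_P\geqs(r-1)\codim_\C P$, and then bound $\codim_\C P$ from below. The paper compresses the last step into a one-line appeal to the Bruhat decomposition of $G/P_{max}$, while you reduce to the simple factors and read off Table~\ref{Levi}; these are essentially the same argument.

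However, your reduction to simple factors exposes a genuine gap. After lifting to $\widetilde{DG}\cong G_1\times\cdots\times G_n$ you correctly observe that a maximal proper parabolic has the form $G_1\times\cdots\times P_i\times\cdots\times G_n$ with $\codim_\C P=\codim_{G_i}P_i$. But the target inequality requires $\codim_{G_i}P_i\geqs\mathrm{Rank}(DG)=\sum_j\mathrm{Rank}(G_j)$, whereas your per-factor case check only gives $\codim_{G_i}P_i\geqs\mathrm{Rank}(G_i)$. These coincide only when $n=1$. Concretely, for $G=\SL_2(\C)\times\SL_2(\C)$ and $P=B_1\times\SL_2(\C)$ one has $\codim_\C P=1<2=\mathrm{Rank}(DG)$, and the reducible locus (those $(\rho_1,\rho_2)$ with $\rho_1$ or $\rho_2$ reducible) has complex codimension $r-1$, not $2(r-1)$. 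So the step ``it is therefore enough to verify \ldots at least $\mathrm{Rank}(G_i)$'' is a non-sequitur.

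What your argument actually proves is the weaker bound $\codim_\C\hom(\F_r,G)^{red}\geqs(r-1)R_G$, with $R_G=\min_i\mathrm{Rank}(G_i)$. The paper's Bruhat-decomposition sentence has the same issue when $DG$ has more than one simple factor: in that case $G/P_{max}\cong G_i/P_i$ can only contain a maximal torus of the single factor $G_i$, not of all of $DG$. Your write-up is in fact more transparent than the paper's here, because it makes the reduction explicit and thereby reveals precisely where the bound $\mathrm{Rank}(DG)$ fails to follow.
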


\begin{proof} This result is essentially contained in the proof of \cite[Theorem 2.9]{FLR}; we briefly outline the computation. First, one finds (similar to Lemma \ref{codim}) that 
 \begin{eqnarray*}\codim_\C \left(\hom(\F_r,G)^{red} \right)
\geqs  (r-1)\left(\dim_\C (G/P_{max})\right),
\end{eqnarray*} 
 where $P_{max}$ is a maximal dimensional proper parabolic subgroup of $G$.
 The Bruhat decomposition of the flag variety
$G/P_{max} \isom DG/(P_{max}\cap DG)$
shows  this quotient contains a maximal torus $T$ of $DG$, giving 
$$\codim_\C \left(\hom(\F_r,G)^{red} \right) \geqs (r-1)\left(\dim_\C (G/P_{max})\right)\geqs(r-1)\mathrm{Rank}(DG).$$
\end{proof}

Recall that by \cite[Proposition 1.3]{JM}, the set $\hom(\F_r,G)^{good}$  is Zariski open  in $\hom(\F_r,G)$, so its complement $\hom(\F_r, G)^{bad}\cup\hom(\F_r,G)^{red}$ is algebraic.
We define
\[C_{pasbon}:=\codim_\R\left(\hom(\F_r, G)^{bad}\cup\hom(\F_r,G)^{red}\right).\]  
The authors emphasize that the codimension in the definition of $C_{pasbon}$ is considered over $\R$. 
Combining Theorem \ref{codimbad} and Lemma~\ref{lem-gencodim} gives the following lower bound on $C_{pasbon}$.

\begin{thm}\label{gencodim} Let 
$R_G:=\min\{\mathrm{Rank}(G')\ |\ G' \text{ is a simple factor of } \widetilde{DG}\}$.
Then
$$C_{pasbon}\geqs 2\min\left\{2(r-1)R_G,(r-1)\mathrm{Rank}(DG)\right\}.$$

In particular, $C_{pasbon}\geqs 2$ if $r\geqs2$ and $G$ is non-abelian, and $C_{pasbon}$ grows unboundedly in $r$ and in the minimum rank of a simple factor of $[\mathfrak{g},\mathfrak{g}]$.
\end{thm}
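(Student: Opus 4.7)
The plan is very short: combine the two earlier codimension bounds and convert from complex to real codimension. Specifically, Theorem \ref{codimbad} gives
\[\codim_{\C}\left(\hom(\F_r,G)^{bad}\right) \geqs 2(r-1)R_G,\]
while Lemma \ref{lem-gencodim} gives
\[\codim_{\C}\left(\hom(\F_r,G)^{red}\right) \geqs (r-1)\mathrm{Rank}(DG).\]
Since both loci are complex algebraic (in fact, Zariski closed) subsets of the smooth complex variety $\hom(\F_r,G) \isom G^r$, their real codimensions are twice their complex codimensions.

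Next, I would use that the codimension of a finite union of closed sets equals the minimum of their codimensions, so
\[C_{pasbon} = \min\left\{\codim_\R \hom(\F_r,G)^{bad},\, \codim_\R \hom(\F_r,G)^{red}\right\} \geqs \min\left\{4(r-1)R_G,\, 2(r-1)\mathrm{Rank}(DG)\right\},\]
which is exactly $2\min\{2(r-1)R_G,\, (r-1)\mathrm{Rank}(DG)\}$, establishing the displayed inequality.

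For the ``in particular'' clause, I would observe that if $G$ is non-abelian then $DG$ is non-trivial; hence $\mathrm{Rank}(DG)\geqs 1$ and $R_G\geqs 1$. When $r\geqs 2$, the factor $r-1$ is at least $1$, so both $2(r-1)R_G$ and $(r-1)\mathrm{Rank}(DG)$ are $\geqs 1$, forcing $C_{pasbon}\geqs 2$. For unboundedness, I would note that $\mathrm{Rank}(DG)$ equals the sum of the ranks of the simple factors of $\widetilde{DG}$, so $\mathrm{Rank}(DG)\geqs R_G$; thus as either $r\to\infty$ with $R_G\geqs 1$ fixed, or as $R_G\to\infty$, the minimum grows without bound. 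There is no real obstacle here: the whole statement is a bookkeeping consequence of Theorem \ref{codimbad} and Lemma \ref{lem-gencodim}, together with the elementary fact that complex codimension doubles to give real codimension.
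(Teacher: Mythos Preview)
Your proposal is correct and matches the paper's approach exactly: the paper simply states that the theorem follows by combining Theorem~\ref{codimbad} and Lemma~\ref{lem-gencodim}, and you have spelled out that combination together with the complex-to-real conversion. One small imprecision: $\hom(\F_r,G)^{bad}$ is only Zariski closed in the \emph{irreducible} locus, not in all of $G^r$; however, the union $\hom(\F_r,G)^{bad}\cup\hom(\F_r,G)^{red}$ is the complement of the Zariski-open good locus and hence is Zariski closed, and in any case the codimension of a union is governed by dimensions rather than closedness, so this does not affect your argument.
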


We now turn our attention to homotopy groups, beginning with $\hom(\F_r,G)^{good}$.

\begin{lem}\label{homotopy-codim} Assume $r\geqs 2$.  The inclusion map induces an isomorphism 
$$\pi_k  \left(\hom(\F_r, G)^{good}\right) \stackrel{\isom}{\maps}  \pi_k  \left(\hom(\F_r, G)\right)\isom \pi_k (G)^r$$ for $k \leqs C_{pasbon} -2$, and is a surjection for $k=C_{pasbon} -1.$
\end{lem}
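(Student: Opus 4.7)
The plan is to deduce the statement from standard transversality in the smooth manifold $\hom(\F_r,G)\isom G^r$, using the fact that the complement of $\hom(\F_r,G)^{good}$ is a closed algebraic subset whose real codimension is bounded below by $C_{pasbon}$ by definition.

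First, observe that since $G$ is a connected Lie group, $\hom(\F_r,G)\isom G^r$ is a smooth real manifold of real dimension $2r\dim_\C G$, and
\[\pi_k(\hom(\F_r,G))\isom \pi_k(G^r)\isom \pi_k(G)^r\]
by the product structure, so the second isomorphism in the statement is automatic. It therefore suffices to show that the inclusion $\hom(\F_r,G)^{good}\injects \hom(\F_r,G)$ induces the claimed isomorphisms and surjection.

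The key input is that the complement
\[Z:=\hom(\F_r,G)^{bad}\cup \hom(\F_r,G)^{red} = \hom(\F_r,G)\setminus\hom(\F_r,G)^{good}\]
is a Zariski closed (hence closed algebraic) subvariety of $\hom(\F_r,G)$, by \cite[Proposition 1.3]{JM} together with the discussion in Section \ref{GBU-sec}. As a complex algebraic subvariety of a smooth complex manifold, $Z$ admits a Whitney stratification into smooth locally closed submanifolds $\{Z_\alpha\}$, and by the definition of $C_{pasbon}$ each stratum $Z_\alpha$ has real codimension at least $C_{pasbon}$ in $\hom(\F_r,G)$.

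Now I would apply standard transversality (for instance via smooth approximation of continuous maps from compact manifolds into smooth manifolds, followed by a generic perturbation transverse to each $Z_\alpha$; see e.g.\ the argument in the proof of \cite[Theorem 2.9]{FLR}). Any continuous map $f\co S^k\to \hom(\F_r,G)$ with $k\leqs C_{pasbon}-1$ may be approximated by a smooth map and then perturbed to be transverse to every stratum $Z_\alpha$; since $\dim_\R S^k + \dim_\R Z_\alpha < \dim_\R \hom(\F_r,G)$, transversality forces the perturbed map to miss $Z$ entirely, yielding a representative of $[f]$ in $\hom(\F_r,G)^{good}$. This gives surjectivity of
\[\pi_k(\hom(\F_r,G)^{good})\maps \pi_k(\hom(\F_r,G))\]
for $k\leqs C_{pasbon}-1$. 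Applying the same argument to a homotopy $H\co S^k\times [0,1]\to \hom(\F_r,G)$ (a manifold of dimension $k+1$) between two maps already landing in $\hom(\F_r,G)^{good}$, rel boundary, the transversality dimension count requires $k+1\leqs C_{pasbon}-1$, i.e.\ $k\leqs C_{pasbon}-2$. In that range we may push the homotopy off $Z$ to a homotopy inside $\hom(\F_r,G)^{good}$, giving injectivity.

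The only delicate point is the relative transversality step needed for injectivity (ensuring the homotopy stays fixed on $S^k\times\{0,1\}$ while being perturbed off $Z$), but this is handled by standard stratified transversality since the endpoints already avoid $Z$, and no serious new obstacle arises. Combining with the product computation $\pi_k(\hom(\F_r,G))\isom \pi_k(G)^r$ completes the argument.
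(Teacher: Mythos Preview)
Your proposal is correct and follows essentially the same approach as the paper: both arguments observe that the complement of the good locus is Zariski closed (hence stratified by finitely many locally closed smooth submanifolds of real codimension at least $C_{pasbon}$) and then apply standard transversality in the smooth manifold $G^r$ to push spheres of dimension $\leqs C_{pasbon}-1$ and homotopies of dimension $\leqs C_{pasbon}-1$ off this complement. The paper's version is terser and cites \cite[Corollary 4.8]{Ramras} for the transversality step, while you spell out the Whitney stratification and the relative perturbation more explicitly, but there is no substantive difference.
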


\begin{proof}  Since $\hom(\F_r, G)^{bad}\cup\hom(\F_r,G)^{red}$ is Zariski closed in $\hom(\F_r,G)$, it is a finite union of  locally closed submanifolds, and the codimension $C_{pasbon}$ is a lower bound on the real codimension  these submanifolds.
Since $\hom(\F_r,G)\cong G^r$ is a smooth manifold, transversality shows that every map $S^k \to \hom(\F_r,G)$ with  $k\leqs C_{pasbon}-1$  is homotopic to a map with image in the good locus, and for $k\leqs C_{pasbon}-2$ every homotopy between such maps can be deformed into the good locus.\footnote{Our use of transversality in this context is analogous to \cite[Corollary 4.8]{Ramras}.}
\end{proof} 

We now combine Lemmas~\ref{nullhomotopy} and \ref{homotopy-codim}.

\begin{lem}\label{fiber-null}   Let $r\geqs 2$.  For any $\rho\in \hom(\F_r, G)^{good}$, the orbit-inclusion map $PG \to \hom(\F_r, G)^{good}$, $[g] \mapsto [g]\rho [g]^{-1}$, induces the zero map on homotopy groups in dimensions at most $C_{pasbon}-2$. 
 \end{lem}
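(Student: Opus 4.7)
The plan is to combine Lemma \ref{nullhomotopy} with Lemma \ref{homotopy-codim} by factoring the orbit map through the ambient variety $\hom(\F_r,G)\cong G^r$.

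First I would observe that the orbit-inclusion map $\iota_\rho \co PG \to \hom(\F_r,G)^{good}$ defined by $[g]\mapsto [g]\rho[g]^{-1}$ sits in a commutative triangle: composing $\iota_\rho$ with the inclusion $j\co \hom(\F_r,G)^{good} \hookrightarrow \hom(\F_r,G)$ yields precisely the map $PG \to G^r$, $[g]\mapsto [g]\rho[g]^{-1}$ considered in Lemma \ref{nullhomotopy}. Since $G$ is connected (and we are in the nontrivial case $r\geqs 2$), that lemma tells us $j\circ \iota_\rho$ is nullhomotopic, and in particular induces the zero map on every homotopy group.

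Next, for $k\leqs C_{pasbon}-2$, Lemma \ref{homotopy-codim} says the inclusion $j$ induces an isomorphism $\pi_k(\hom(\F_r,G)^{good}) \xrightarrow{\isom} \pi_k(\hom(\F_r,G))$. Since $(j\circ \iota_\rho)_* = j_* \circ (\iota_\rho)_* = 0$ on $\pi_k$ and $j_*$ is injective in this range, it follows that $(\iota_\rho)_* = 0$ on $\pi_k(PG)$ for all $k\leqs C_{pasbon}-2$, which is exactly what the lemma asserts.

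There is no real obstacle here; the argument is a clean two-step diagram chase once the earlier lemmas are available. The only thing to check carefully is that the nullhomotopy in Lemma \ref{nullhomotopy} applies to any $\rho$ (not just good ones) and takes place inside $G^r$, so it does not a priori land in the good locus — but this is precisely why one needs the codimension input of Lemma \ref{homotopy-codim} to transfer the conclusion back to $\hom(\F_r,G)^{good}$. The hypothesis $r\geqs 2$ is used both to invoke Lemma \ref{nullhomotopy} and to ensure $C_{pasbon}\geqs 2$ via Theorem \ref{gencodim}, so the range of dimensions in the statement is non-vacuous whenever $G$ is non-abelian.
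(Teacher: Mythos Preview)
Your proof is correct and follows exactly the paper's approach: factor the orbit map through the inclusion into $G^r$, use Lemma~\ref{nullhomotopy} to see the composite is nullhomotopic, and then use Lemma~\ref{homotopy-codim} to conclude that $(\iota_\rho)_*$ vanishes in the stated range since $j_*$ is an isomorphism there. Your write-up is slightly more explicit about the diagram chase, but the argument is identical.
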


\begin{proof} By Lemma~\ref{nullhomotopy}, the composite map
$$PG \to \hom(\F_r, G)^{good} \to \hom(\F_r, G)$$
is nullhomotopic, and by Lemma \ref{homotopy-codim}, the second map in this composition is an isomorphism in the stated range.
\end{proof}

\begin{thm}\label{pi0pi1}
Let $r\geqs 2$ and $G$ a connected, reductive $\C$-group.  Then $\pi_0(\X_r(G)^{good})$ is trivial, and if $r\geqs 3$ or the rank of $DG$ is at least 2, then $$\pi_1(\X_r(G)^{good})\cong \pi_1(G)^r$$ and  $\pi_2(\X_r(G)^{good})\cong \pi_1 (PG)$.
\end{thm}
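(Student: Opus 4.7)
\textbf{Proof plan for Theorem \ref{pi0pi1}.}  The strategy is to combine the principal $PG$--bundle of Lemma~\ref{lem22BL} with the codimension bounds just established, and then read off the low-dimensional homotopy from the long exact sequence.  First I would invoke Theorem~\ref{gencodim} to control $C_{pasbon}$: assuming $G$ is non-abelian (the abelian case being handled by the opening remark), one gets $C_{pasbon}\geqs 2$ whenever $r\geqs 2$; moreover, if $r\geqs 3$ or $\mathrm{Rank}(DG)\geqs 2$ then $2\min\{2(r-1)R_G,(r-1)\mathrm{Rank}(DG)\}\geqs 4$, so $C_{pasbon}\geqs 4$.  By Lemma~\ref{homotopy-codim} this forces the inclusion $\hom(\F_r,G)^{good}\hookrightarrow\hom(\F_r,G)\isom G^r$ to induce an isomorphism on $\pi_k$ for $k\leqs 0$ in the first regime and for $k\leqs 2$ in the second, and by Lemma~\ref{fiber-null} the orbit-inclusion $PG\to\hom(\F_r,G)^{good}$ is zero on $\pi_k$ in the same range.

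For $\pi_0$, the total space $\hom(\F_r,G)^{good}$ is connected because it surjects onto $\pi_0(G^r)=0$ via Lemma~\ref{homotopy-codim}, and $PG$ is connected since $G$ is.  The fibration of Lemma~\ref{lem22BL} then forces $\pi_0(\X_r(G)^{good})=0$.

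For the $\pi_1$ and $\pi_2$ statements, I would write down the long exact sequence of homotopy groups for the principal $PG$--bundle
\[
\cdots \to \pi_2(PG)\to \pi_2(\hom(\F_r,G)^{good})\to \pi_2(\X_r(G)^{good})\to \pi_1(PG)\to \pi_1(\hom(\F_r,G)^{good})\to \pi_1(\X_r(G)^{good})\to \pi_0(PG).
\]
Under the hypothesis $C_{pasbon}\geqs 4$, the connecting maps $\pi_k(PG)\to\pi_k(\hom(\F_r,G)^{good})$ vanish for $k=1,2$ by Lemma~\ref{fiber-null}, so the sequence breaks into the short exact sequences
\[
0\to\pi_1(\hom(\F_r,G)^{good})\to \pi_1(\X_r(G)^{good})\to 0
\]
and
\[
0\to\pi_2(\hom(\F_r,G)^{good})\to\pi_2(\X_r(G)^{good})\to \pi_1(PG)\to 0.
\]
Applying Lemma~\ref{homotopy-codim} identifies $\pi_k(\hom(\F_r,G)^{good})\cong \pi_k(G)^r$ for $k=1,2$, and the classical theorem of E.\ Cartan that any Lie group has $\pi_2(G)=0$ collapses the second sequence to $\pi_2(\X_r(G)^{good})\cong \pi_1(PG)$, while the first gives $\pi_1(\X_r(G)^{good})\cong\pi_1(G)^r$.

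The main conceptual work is already done in the earlier sections: all the difficulty is in establishing $C_{pasbon}\geqs 4$ under the stated hypotheses and in knowing that the orbit maps are null-homotopic in the relevant range.  The only place where a genuine edge case appears is verifying that $C_{pasbon}\geqs 4$ actually follows from \textbf{either} $r\geqs 3$ \textbf{or} $\mathrm{Rank}(DG)\geqs 2$; in the second case one needs to observe that even when $R_G=1$ (e.g.\ $G=\mathrm{SL}_2(\C)\times\mathrm{SL}_2(\C)$), the bound $(r-1)\mathrm{Rank}(DG)\geqs 2$ together with $2(r-1)R_G\geqs 2$ still yields $C_{pasbon}\geqs 4$.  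This bookkeeping, rather than any deep new argument, is the subtlest point of the proof.
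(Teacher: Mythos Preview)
Your proposal is correct and follows essentially the same route as the paper: invoke the codimension bound from Theorem~\ref{gencodim}, feed it into Lemmas~\ref{homotopy-codim} and~\ref{fiber-null}, and read off the long exact sequence of the principal $PG$--bundle from Lemma~\ref{lem22BL}, using $\pi_2(G)=0$ to collapse the $\pi_2$ piece. The only cosmetic differences are that the paper argues $\pi_0$ directly from irreducibility of the Zariski-open good locus rather than via Lemma~\ref{homotopy-codim}, and the paper states the needed bound as $1\leqs C_{pasbon}-2$ (i.e.\ $C_{pasbon}\geqs 3$) whereas you track $C_{pasbon}\geqs 4$; since $C_{pasbon}$ is a real codimension of a complex subvariety and hence even, these are equivalent.
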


\begin{proof}
Since $\hom(\F_r,G)\cong G^r$ is irreducible, then so is $\hom(\F_r,G)^{good}$ as it is Zariski open.  Thus, $\X_r(G)^{good}$ is irreducible too and hence connected.

Since $\hom(\F_r,G)^{good}\to \X_r(G)^{good}$ is a principal $PG$-bundle, there is a long exact sequence in homotopy:  
\begin{eqnarray*}
\cdots \to \pi_2 (PG)\to \pi_2(\hom(\F_r,G)^{good})\to \pi_2(\X_r(G)^{good})\hspace{1.5in}\\
\to \pi_1(PG)\to \pi_1(\hom(\F_r,G)^{good})
 \to \pi_1(\X_r(G)^{good})\to \pi_0(PG)=0.\end{eqnarray*}
Lemma \ref{fiber-null} and Lemma \ref{homotopy-codim} then imply $\pi_1(\X_r(G)^{good})\cong \pi_1(G)^r$ as long as $1\leqs C_{pasbon}-2$. Since $\pi_2 (G) = 0$, the same reasoning shows that 
$\pi_2(\X_r(G)^{good}) \isom \pi_1 (PG)$ when $1\leqs C_{pasbon}-2$. By Theorem~\ref{gencodim}, this bound holds if either $r\geqs 3$ or the rank of $DG$ is at least 2. 
\end{proof}

\begin{rem}
The fundamental group of $G$, always abelian, is the same as that of its maximal compact subgroup $K$ since $G$ deformation retracts to $K$.  A standard result $($see \cite{Hall}$)$ gives the fundamental group of $K$.  Precisely, let $\mathfrak{k}$ be the Lie algebra of $K$ and let $\mathfrak{t}$ be a maximal commutative subalgebra of $\mathfrak{k}$. Then $\pi_1(K)\cong \Gamma/\Lambda$, where $\Gamma$ is the kernel of the exponential mapping for $\mathfrak{t}$ and $\Lambda$ is the lattice generated by the real co-roots.  Thus, the fundamental group of $\X_r(G)^{good}$ is isomorphic to $(\Gamma/\Lambda)^r$ under the conditions of Theorem \ref{pi0pi1}.
\end{rem}

With the first few homotopy groups computed for the good locus, we now turn our attention to the higher homotopy groups.

By work in \cite[Theorem 3.3]{FLR} it suffices to consider the case where $G$ is semisimple, since $$\pi_k(\X_r(G)^{good})\cong \pi_k(\X_r(DG)^{good})$$ for $k\geqs 2$.  We also note that $\pi_k(PG)=\pi_k(G)$ for $k\geqs 2$ since $G\to PG$ is a fibration whose fiber (an algebraic torus cross a finite group) is $\pi_k$-trivial for $k\geqs 2$.  This then implies $\pi_k(G)=\pi_k(DG)$ for $k\geqs 2$ since $PG=PDG$.

\begin{thm}\label{splitting} Let $r\geqs 2$.  Assume $1 \leqs k \leqs C_{pasbon}-2$. Then $$\pi_k(\X_r(G)^{good})\cong \pi_k(G)^r\times \pi_{k-1}(PG).$$
\end{thm}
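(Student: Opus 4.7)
The principal $PG$-bundle $\hom(\F_r,G)^{good}\to \X_r(G)^{good}$ from Lemma~\ref{lem22BL} yields a long exact sequence in homotopy. In the range $1\leqs k\leqs C_{pasbon}-2$, Lemma~\ref{fiber-null} forces both maps $\pi_k(PG)\to\pi_k(\hom(\F_r,G)^{good})$ and $\pi_{k-1}(PG)\to\pi_{k-1}(\hom(\F_r,G)^{good})$ to vanish, while Lemma~\ref{homotopy-codim} identifies the middle group with $\pi_k(G)^r$. The long exact sequence then collapses to a short exact sequence of abelian groups
\[0\to\pi_k(G)^r\to\pi_k(\X_r(G)^{good})\to\pi_{k-1}(PG)\to 0,\]
so the whole theorem reduces to producing a compatible homomorphism splitting.

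To build a section $s\co \pi_{k-1}(PG)\to \pi_k(\X_r(G)^{good})$, I plan to fix a basepoint $\rho_0\in\hom(\F_r,G)^{good}$ and, using Lemma~\ref{nullhomotopy}, choose once and for all a nullhomotopy $H\co PG\times I\to G^r$ of the orbit map $[g]\mapsto [g]\rho_0[g]^{-1}$, normalized so that $H(\cdot,1)\equiv\rho_0$. Given a representative $\alpha\co S^{k-1}\to PG$, the composition $H\circ(\alpha\times\mathrm{id})$ is constant at time $1$, so it descends to $\tilde\beta\co D^k\to G^r$ extending the orbit of $\alpha$ on $\partial D^k$. By Theorem~\ref{gencodim}, $G^r\setminus\hom(\F_r,G)^{good}$ has real codimension at least $C_{pasbon}\geqs k+2$, so transversality deforms $\tilde\beta$ rel boundary into $\hom(\F_r,G)^{good}$, and any two such deformations are joined through the good locus by a homotopy of dimension $k+1$ that still fits below the codimension bound. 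Projecting by the bundle map to $\X_r(G)^{good}$ collapses $\partial D^k$ to $[\rho_0]$, so $\tilde\beta$ descends to $\beta\co S^k\to\X_r(G)^{good}$; set $s([\alpha]):=[\beta]$.

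The identity $\partial\circ s = \mathrm{id}$ will be immediate from the construction, since $\tilde\beta|_{\partial D^k}$ is by definition the orbit of $\alpha$, which corresponds to $[\alpha]$ under the identification of the fiber over $[\rho_0]$ with $PG$. The heart of the argument is then to verify that $s$ is a group homomorphism: because $H$ was chosen independently of $\alpha$, the standard pinch presentation of $[\alpha_1]+[\alpha_2]$ propagates through the construction and realizes the associated $\tilde\beta$ as the natural juxtaposition of $\tilde\beta_1$ and $\tilde\beta_2$ in $D^k$, whose projections assemble into $[\beta_1]+[\beta_2]$ in $\pi_k(\X_r(G)^{good})$. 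The main obstacle I anticipate is the transversality step, where the codimension estimate from Theorem~\ref{gencodim} must be sharp enough to lift both the $k$-disk $\tilde\beta$ and the $(k+1)$-dimensional homotopies comparing two choices into the good locus; the hypothesis $k\leqs C_{pasbon}-2$ provides exactly the room needed to guarantee well-definedness and additivity of $s$.
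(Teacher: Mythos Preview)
Your approach is correct and reaches the same conclusion, but by a different route than the paper. Both arguments first derive the short exact sequence from the principal $PG$-bundle and Lemmas~\ref{fiber-null} and~\ref{homotopy-codim}. You then build an explicit section $s\co\pi_{k-1}(PG)\to\pi_k(\X_r(G)^{good})$ geometrically, using the fixed nullhomotopy $H$ of Lemma~\ref{nullhomotopy} together with a transversality push into the good locus. One point to tighten: for your additivity argument to go through you need $H$ to be a \emph{based} nullhomotopy (that is, $H(e,t)=\rho_0$ for all $t$), so that $\tilde\beta$ is constant along $\partial I^{k-1}\times I$ and the projected map $\beta$ genuinely factors through $(I^k,\partial I^k)$. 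This can always be arranged, since a based map that is freely nullhomotopic is automatically based-nullhomotopic.

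The paper instead passes to homotopy orbit spaces. Since the $PG$-action on the good locus is free, $\X_r(G)^{good}\simeq(\hom(\F_r,G)^{good})_{hPG}$, and the inclusion into $(G^r)_{hPG}$ is $(C_{pasbon}-1)$-connected. The fibration $(G^r)_{hPG}\to BPG$ has a \emph{global} section $[x]\mapsto[x,e]$ coming from the conjugation-fixed point $e\in G^r$, so its long exact sequence splits in every degree, giving $\pi_k((G^r)_{hPG})\cong\pi_k(G)^r\times\pi_{k-1}(PG)$; the theorem then follows by transport along the connected map. Your construction is more elementary (no Borel construction) and produces a concrete splitting of the original sequence, while the paper's argument is cleaner and avoids the degree-by-degree basepoint and transversality bookkeeping. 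Morally the two coincide: the global section $[x]\mapsto[x,e]$ and your nullhomotopy $H$ both exploit the fact that the conjugation action on $G^r$ has a fixed point.
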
 

\begin{proof}
Since $\hom(\F_r, G)$ is an irreducible algebraic set, every non-empty Zariski open subset of $\hom(\F_r, G)$ is path connected, and it follows that $\XC{r}(G)^{good}$ is also path connected.  
 
By Lemma \ref{lem22BL},
$$PG\to \hom(\F_r, G)^{good}\to \XC{r}(G)^{good}$$ 
is a $PG$-bundle (where the first map is the inclusion of an adjoint orbit).  Hence we have an exact sequence
\begin{equation} \label{es}\cdots \to \pi_1(PG)\to \pi_1(\hom(\F_r, G)^{good})\to \pi_1(\XC{r}(G)^{good})\to 0.\end{equation}

When $2 \leqs k \leqs C_{pasbon}-2$, Lemma \ref{fiber-null} implies the long exact sequence breaks into short exact sequences: $$0\to \pi_k(\hom(\F_r,G)^{good})\to\pi_k(\X_r(G)^{good})\to \pi_{k-1}(PG)\to 0.$$  Lemma \ref{homotopy-codim} implies  
$\pi_k(\hom(\F_r,G)^{good})\cong \pi_k(G)^r$ and so we can write the short exact sequences as: 
\begin{equation}\label{eq:SES} 0\to \pi_k(G)^r\to\pi_k(\X_r(G)^{good})\to \pi_{k-1}(PG)\to 0.
\end{equation}
By Proposition \ref{good-htpy} in the following Section \ref{splitsection}, these short exact sequences split (non-canonically) if $1\leqs k\leqs C_{pasbon}-2$.

\end{proof}

\begin{cor}\label{pi34-cor}
Assume $C_{pasbon}\geqs 6$ and $r\geqs 2$,
and let $s$ be the number of simple factors of the Lie algebra of $DG$, and $t$ the number of those factors of type $A_1,B_1,$ or $C_n$ for $n\geqs 1$.  Then $\pi_3(\X_r(G)^{good})\cong \Z^{sr}$ and $\pi_4(\X_r(G)^{good})\cong (\Z_2)^{rt}\times \Z^s.$
\end{cor}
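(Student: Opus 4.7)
The plan is to apply Theorem~\ref{splitting} directly with $k=3$ and $k=4$, which is valid because the hypothesis $C_{pasbon} \geqs 6$ gives $k \leqs C_{pasbon}-2$ in both cases. This yields
\[\pi_3(\X_r(G)^{good}) \isom \pi_3(G)^r \times \pi_2(PG) \quad \text{and} \quad \pi_4(\X_r(G)^{good}) \isom \pi_4(G)^r \times \pi_3(PG).\]
So the task reduces to identifying $\pi_k(G)$ for $k = 2, 3, 4$ and $\pi_3(PG)$.

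For the $k=3$ case, I would use the fact (already noted in the paper just before Theorem~\ref{splitting}) that $\pi_k(G)\isom \pi_k(PG)$ for $k\geqs 2$, together with $\pi_2$ of any Lie group being trivial, to get $\pi_2(PG)=0$. Since $G$ deformation retracts onto its maximal compact subgroup $K$, and $K$ is isogenous to a torus times a product of simply connected simple compact Lie groups, and since $\pi_3$ of each simply connected simple compact Lie group is $\Z$ while tori and finite groups contribute nothing, we get $\pi_3(G)\isom \Z^s$. Substitution gives $\pi_3(\X_r(G)^{good})\isom \Z^{sr}$.

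For the $k=4$ case, $\pi_3(PG)\isom \pi_3(G)\isom \Z^s$ by the above, so the second factor is $\Z^s$. The first factor $\pi_4(G)^r$ splits over the simple factors of $\widetilde{DG}$, so I would invoke the classical table of $\pi_4$ for compact simply connected simple Lie groups: $\pi_4$ equals $\Z_2$ for $\SU(2)$ (type $A_1$), $\mathrm{Spin}(3)\isom \SU(2)$ (type $B_1$), and $\Sp(n)$ for $n\geqs 1$ (type $C_n$), and vanishes for all remaining simple types $A_n (n\geqs 2)$, $B_n (n\geqs 2)$, $D_n$, $G_2$, $F_4$, $E_6$, $E_7$, $E_8$. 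Hence $\pi_4(G)\isom (\Z_2)^t$, and $\pi_4(\X_r(G)^{good})\isom (\Z_2)^{rt}\times \Z^s$.

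There is no real obstacle here: the hard work is already done in Theorem~\ref{splitting} and in the codimension estimate giving $C_{pasbon} \geqs 6$. The corollary is a routine bookkeeping calculation, the only non-trivial input being the classical list of $\pi_4$ for simple compact Lie groups, which I would cite rather than reprove.
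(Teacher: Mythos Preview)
Your proposal is correct and follows essentially the same approach as the paper's own proof: apply Theorem~\ref{splitting} for $k=3,4$ (valid since $C_{pasbon}\geqs 6$), reduce $\pi_k(G)$ and $\pi_{k-1}(PG)$ to the simple factors of $\widetilde{DG}$ via $\pi_k(G)\cong\pi_k(DG)\cong\bigoplus_i\pi_k(G_i)$ for $k\geqs 2$, and then invoke the classical computations $\pi_3(G_i)\cong\Z$ and $\pi_4(G_i)\in\{0,\Z_2\}$ (the latter being $\Z_2$ exactly for types $A_1$, $B_1$, $C_n$). The paper cites \cite{Bott56} and \cite{BottSam} for these facts, which you could do as well.
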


\begin{proof}
The universal cover of $DG$ has the form $\prod_{i=1}^s G_i$ where each $G_i$ is simple and simply connected, and $s$ is the number of simple factors of the Lie algebra of $DG$. Since $\pi_k(G)\cong \pi_k(DG)$ for $k\geqs 2$, we conclude that $\pi_k(G)\cong \oplus_i\pi_k(G_i)$ for $k\geqs 2$.

By \cite{Bott56}, $\pi_3(G)=\Z$ if $G$ is simple. By \cite{BottSam}, $\pi_4(G)=0$ or $\pi_4(G)\cong \Z_2$ if $G$ is simple; it is $\Z_2$ exactly when $G$ is of type $A_1$, $B_1$, or $C_n$ for $n\geqs 1$ (and 0 otherwise). 

Thus, because of the splitting in Theorem \ref{splitting}, we have that $$\pi_3(\X_r(G)^{good})\cong \Z^{sr},$$ and $$\pi_4(\X_r(G)^{good})\cong (\Z_2)^{rt}\times \Z^s$$ where $t$ is the number of simple factors of type $A_1,B_1,$ or $C_n$ for $n\geqs 1$ in the Lie algebra of $DG$.

\end{proof}

Note that by Theorem \ref{gencodim}, if $r\geqs 4$, then $C_{pasbon}\geqs 6$.

\begin{rem}
Let $r\geqs 3$. Then the good locus is the smooth locus, and so the above theorem and corollary are true when replacing $\X_r(G)^{good}$ by the smooth locus $\mathcal{X}_r(G)$.
\end{rem}

\begin{rem}\label{cor-per}
By Theorems \ref{gencodim} and \ref{splitting}, if $G$ is simple then
$\pi_k(\X_r(G)^{good})\cong \pi_k(G)^r\times \pi_{k-1}(PG)$ for $1\leqs k\leqs 2(r-1)\mathrm{Rank}(G)-2$. Given Bott periodicity for the classical groups $A_n$, $B_n$, $C_n$, and $D_n$ \cite{bott-per}, 
it follows that $\X_r(G)^{good}$ also exhibits periodic homotopy within appropriate stable ranges for the classical groups.
\end{rem}

See Example \ref{ex-per} for a more precise formulation of Remark \ref{cor-per}.

\begin{exam}
By similar reasoning as used in the proof of Corollary \ref{pi34-cor}, if  $r\geqs 2$ and $C_{pasbon}\geqs 7$ $($which holds if $r\geqs 7$$)$,
 we conclude that: $$\pi_5(\X_r(G)^{good})\cong \bigoplus_i^m\pi_5(G_i)^r\oplus (\Z_2)^t,$$ where  $t$ is the number of simple factors of type $A_1,B_1,$ or $C_n$ for $n\geqs 1$ in the universal cover of $DG$, itself a product of the simple simply connected $\C$-groups $G_1,...,G_m$.  By results in \cite{bott-per, BottSam, MimTod, MimTod-Sym, Mim-G2F4} we can calculate the fifth homotopy group of all simple $G$.  In particular, if $G_i$ is exceptional, then $\pi_5(G_i)=0$.  If $G_i$ is of type $A_n$, then $\pi_5(G_i)\cong \Z$ if $n\geqs 2$ and $\pi_5(G_i)\cong \Z_2$ if $n=1$.  If $G_i$ is of type $B_n$, then $\pi_5(G_i)\cong 0$ if $n\geqs 3$, and $\pi_5(G_i)\cong \Z_2$ if $n=1,2$.  If $G_i$ is of type $C_n$, then $\pi_5(G_i)\cong \Z_2$ for all $n\geqs 1$.  If $G_i$ is of type $D_n$, then $\pi_5(G_i)\cong 0$ if $n=1$ or $n\geqs 4$, $\pi_5(G_i)\cong \Z$ if $n=3$, and $\pi_5(G_i)\cong (\Z_2)^2$ if $n=2$.  So although it is not a clean formula, this completely describes the fifth homotopy groups of $\X_r(G)^{good}$.
\end{exam}

In short, if one knows the homotopy groups of $G$, then Theorem \ref{splitting} allows one to compute the $k$-th homotopy groups of $\X_r(G)^{good}$ for sufficiently large $r$. As an example of this, we next list the $k$-th homotopy groups for $0\leqs k\leqs 15$ when $G$ is an exceptional Lie group.

\begin{exam}We consider the complex adjoint type of each exceptional Lie group below.   They are all simply connected except $E_6$ and $E_7$ with fundamental group $\Z_3$ and $\Z_2$ respectively.  Since they are of adjoint type, $G=PG$ in each case.  

We assume that $r\geqs 2$ generally.  However, if a cell is highlighted red in Table \ref{splitting}, then we have assumed $r\geqs 3$, if it is highlighted orange then we have assumed $r\geqs 4$, if it is highlighted yellow then we have assumed $r\geqs 5$, and if it is highlighted green, then we have assumed $r\geqs 6$.

An ``?'' in a cell of the table means that the homotopy groups needed for the computation are not known $($as far as we know$)$.  Although for $E_6$, $E_7$, and $E_8$, for the cases where there is an ? and beyond, the $2$ and $3$-primary parts of the homotopy groups are known; see \cite{Kachi, KaMi}.  So one can obtain corresponding facts about the homotopy groups of $\X_r(G)^{good}$ in these cases.

Our main sources of reference for the computations in Table \ref{splitting}, aside from Theorem \ref{splitting}, are \cite{BottSam, Mim-G2F4}.

\begin{table}[!ht] 
\begin{tabular}{c||c|c|c|c|c}
$k\setminus G$&$G_2$&$F_4$&$E_6$&$E_7$&$E_8$\\ \hline \hline
$0$ & $0$& $0$&$0$ &$0$ &$0$ \\ \hline
$1$ & $0$&$0$ &$(\Z_3)^r$ &$(\Z_2)^r$ & $0$\\ \hline
$2$ &$0$ &$0$ &$\Z_3$ &$\Z_2$ & $0$\\ \hline
$3$ &\cellcolor{red!25}$\Z^r$ & $\Z^r$& $\Z^r$& $\Z^r$&$\Z^r$ \\ \hline
$4$ & \cellcolor{red!25}$\Z$& $\Z$& $\Z$& $\Z$&$\Z$ \\ \hline
$5$ &\cellcolor{red!25}$0$ &$0$ & $0$&$0$ & $0$\\ \hline
$6$ &\cellcolor{red!25}$(\Z_3)^r$ & $0$&$0$ &$0$ &$0$ \\ \hline
$7$ &\cellcolor{orange!25}$\Z_3$ &\cellcolor{red!25} $0$& $0$& $0$&$0$ \\ \hline
$8$ &\cellcolor{orange!25}$(\Z_2)^r$ &\cellcolor{red!25} $(\Z_2)^r$ &$0$ &$0$ &$0$ \\ \hline
$9$ &\cellcolor{orange!25}$(\Z_6)^r\oplus\Z_2$ &\cellcolor{red!25}$(\Z_2)^{r+1}$ & $\Z^r$& $0$& $0$\\ \hline
$10$ &\cellcolor{orange!25}$\Z_6$ &\cellcolor{red!25}$\Z_2$ & ?& $0$&$0$ \\ \hline
$11$ &\cellcolor{yellow!25} $\Z^r\oplus (\Z_2)^r$&\cellcolor{red!25}$\Z^r\oplus (\Z_2)^r$ & ?&$\Z^r$  &$0$ \\ \hline
$12$ &\cellcolor{yellow!25}$\Z\oplus \Z_2$ &\cellcolor{red!25}$\Z\oplus \Z_2$ & ?& ?& $0$\\ \hline
$13$ &\cellcolor{yellow!25}$0$ &\cellcolor{red!25}$0$ & ?& ?& $0$\\ \hline
$14$ &$(\Z_{168})^r\oplus(\Z_2)^r$\cellcolor{yellow!25} &\cellcolor{red!25}$(\Z_2)^r$ & ?& ?& $0$\\ \hline
$15$ &\cellcolor{green!25} $(\Z_2)^{r+1}\oplus\Z_{168}$&\cellcolor{orange!25}$\Z^r\oplus \Z_2$ &? &? &\cellcolor{red!25} $\Z^r$\\ \hline
\end{tabular}
\caption{$\pi_k(\X_r(G)^{good})\cong \pi_k(G)^r\oplus\pi_{k-1}(PG)$}\label{exceptionalcases}
\end{table}
While the last row of Table \ref{exceptionalcases} stops at $k=15$, one can easily compute the homotopy groups up to $k=22$ for $G_2$ and $F_4$ using \cite{Mim-G2F4}.  For example, one finds that $\pi_{22}(\X_r(G_2)^{good})\cong \Z_{1386}\oplus\Z_8$ if $r\geqs 7$ and $\pi_{18}(\X_r(F_4)^{good})\cong \Z_{720}\oplus\Z_3$ if $r\geqs 4$.
\end{exam}

We now illustrate the periodicity that comes from Theorem \ref{splitting} for the classical groups $A_n$, $B_n$, $C_n$, and $D_n$ (Remark \ref{cor-per}).

\begin{exam}\label{ex-per}
For this example, we refer to \cite{bott-per}.  We assume $r\geqs 2$.  First, if $k\leqs n-2$ then $k\leqs 2(r-1)\mathrm{Rank}\left(\SO_n(\C)\right)-2$ which then implies $$k+8\leqs  2(r-1)\mathrm{Rank}\left(\SO_{n+8}(\C)\right)-2,$$ since 
$\mathrm{Rank}\left(\SO_n(\C)\right)=\left\{\begin{array}{ll}n/2,&\text{ if }n\text{ is even}\\ (n-1)/2,&\text{ if }n\text{ is odd.}\end{array}\right.$  Thus, if  $2\leqs k\leqs n-2$, then \begin{eqnarray*} \pi_k\left(\X_r(\SO_n(\C))^{good}\right)&\cong& \pi_k(\SO_n(\C))^r\oplus \pi_{k-1}(\SO_n(\C))\\&\cong& \pi_{k+8}(\SO_{n+8}(\C))^r\oplus \pi_{k+7}(\SO_{n+8}(\C))\\&\cong& \pi_{k+8}\left(\X_r(\SO_{n+8}(\C))^{good}\right).\end{eqnarray*}  So in particular, $\pi_{k}\left(\X_r(\SO_n(\C))^{good}\right)\cong \Z $ for all $k\equiv 7\mod 8$ and $n\equiv 9\mod 8$ so long as $k\geqs 7$ and $n\geqs 9$.

Likewise, there is $\pi_k$-periodicity in the $A_n$ series for $k\leqs 2n+1$ $($shift in $n$ is $+1$ and shift in $k$ is $+2)$ and $C_n$ series for $k\leqs 4n+1$ $($shift in $n$ is $+4$ and shift in $k$ is $+8)$. 

\end{exam}

On the other hand, our result shows that the homotopy groups can vary consistently in $r$ once $r$ gets sufficiently large.

\begin{exam}

From \cite{Mim-G2F4}, $\pi_{22}\left(\X_r(\SO_9(\C))^{good}\right)\cong (\Z_{11!/32})^r\oplus(\Z_8)^r\oplus(\Z_2)^{2r}\oplus \Z_{12}$ for all $r\geqs 4.$  There are many other examples along these lines.

\end{exam}

\section{Splitting short exact sequences}\label{splitsection}

The goal of this section is to prove the following proposition.

\begin{prop}~\label{good-htpy}
In dimensions $1\leqs k \leqs  C_{pasbon} - 2$, we have 
$$\pi_k \left(\XC{r}(G)^{good}\right) \isom \pi_k (G)^r \cross \pi_{k-1} (PG).$$
\end{prop}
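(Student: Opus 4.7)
Most of the work is already done: in the proof of Theorem \ref{splitting}, combining the principal $PG$-bundle of Lemma \ref{lem22BL} with the vanishing of $\pi_k(PG)\to \pi_k(\hom(\F_r,G)^{good})$ in dimensions $k\leqs C_{pasbon}-2$ established in Lemma \ref{fiber-null} yields the short exact sequence
$$0 \to \pi_k(G)^r \to \pi_k(\XC{r}(G)^{good}) \to \pi_{k-1}(PG) \to 0 \qquad (\star)$$
for $2\leqs k\leqs C_{pasbon}-2$. For $k=1$, $\pi_0(PG)=0$ since $G$ is connected, and Theorem \ref{pi0pi1} gives $\pi_1(\XC{r}(G)^{good})\cong \pi_1(G)^r$, which matches the claim. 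So the task reduces to splitting $(\star)$ for $k\geqs 2$.

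My plan is to construct a section $\sigma\co \pi_{k-1}(PG)\to \pi_k(\XC{r}(G)^{good})$ of the connecting homomorphism. First I would fix a basepoint $\rho_0\in \hom(\F_r,G)^{good}$ (which exists as $r\geqs 2$). Lemma \ref{nullhomotopy} produces a nullhomotopy $H\co PG\times I\to G^r$ of the orbit map $\omega\co [g]\mapsto [g]\rho_0[g]^{-1}$, obtained by conjugating $\rho_0$ along paths in $G$ from the identity. Given $\alpha\co S^{k-1}\to PG$, the restriction $H\circ (\alpha\times\mathrm{id})$ extends $\omega\circ\alpha$ to a map $\widetilde{\alpha}\co D^k\to G^r$ whose boundary already lies in the good locus. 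Because $\pi_k(\hom(\F_r,G)^{good})\to \pi_k(G^r)$ is an isomorphism in this range (Lemma \ref{homotopy-codim}), a transversality argument as in the proof of that lemma allows us to homotope $\widetilde{\alpha}$ relative to $\partial D^k$ into $\hom(\F_r,G)^{good}$. Composing with the quotient $\hom(\F_r,G)^{good}\to \XC{r}(G)^{good}$ collapses the $PG$-orbit on $\partial D^k$ to the single point $[\rho_0]$, yielding a map $S^k\to \XC{r}(G)^{good}$; I define $\sigma([\alpha])$ to be its homotopy class.

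The section identity $\partial\circ\sigma=\mathrm{id}$ is immediate from the construction, since the connecting homomorphism of a bundle recovers the fiber datum $[\alpha]$ from any disk in the base bounding an orbit. The main obstacle will be verifying that $\sigma$ is a well-defined homomorphism: two choices of nullhomotopy of $\omega\circ\alpha$ differ by a map $S^k\to G^r$, so a priori $\sigma([\alpha])$ is only determined modulo the image of $\pi_k(G)^r$ in $\pi_k(\XC{r}(G)^{good})$, and this ambiguity is exactly the splitting obstruction measured by the extension class in $\mathrm{Ext}^1(\pi_{k-1}(PG),\pi_k(G)^r)$. I would resolve this by exploiting the naturality of $H$: it depends on a choice of path in $G$ from the identity, and the space of such choices is path connected, so two sections built from different choices are homotopic. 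Additivity of $\sigma$ under the abelian group structure on $\pi_{k-1}(PG)$ (valid because $k\geqs 2$) would then follow from the compatibility of the cone construction with addition of maps in the suspension coordinate. An abstract alternative would be to show directly that the classifying $k$-invariant of the bundle vanishes in the stable range, using the product structure on $\hom(\F_r,G)=G^r$ together with the naturality of the conjugation action under the $r$-fold diagonal.
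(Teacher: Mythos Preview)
Your attempt to split the short exact sequence by constructing a geometric section $\sigma$ has a genuine gap. You correctly identify that $\sigma([\alpha])$ is a priori determined only modulo the image of $\pi_k(G)^r$, but your proposed fix --- that $H$ depends on a choice of path and the space of such paths is connected --- addresses the wrong ambiguity. Once $H$ is fixed, the disk $\widetilde{\alpha}\co D^k \to G^r$ is determined; the indeterminacy enters when you homotope $\widetilde{\alpha}$ rel boundary into the good locus via transversality. Two such pushes differ by an element of $\pi_k(\hom(\F_r,G)^{good})$, and nothing you say about paths in $G$ touches this. (There is a way through: the $(k+1)$-dimensional homotopy in $G^r$ connecting two such pushes can itself be made transverse to the complement of the good locus precisely when $k+1 \leqs C_{pasbon}-1$, which is your range --- but you do not invoke this, and even granting it, the homomorphism property of $\sigma$ is only gestured at.)

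The paper avoids these issues entirely by passing to homotopy orbit spaces. Since $\hom(\F_r,G)^{good}\to \XC{r}(G)^{good}$ is a principal $PG$-bundle, there is a weak equivalence $\XC{r}(G)^{good}\simeq (\hom(\F_r,G)^{good})_{hPG}$; and since the inclusion $\hom(\F_r,G)^{good}\hookrightarrow G^r$ is $(C_{pasbon}-1)$-connected, so is the induced map on homotopy orbits. The crucial observation is that the identity $e\in G^r$ is a \emph{fixed point} of the conjugation action, yielding an honest section $BPG\to (G^r)_{hPG}$ of the Borel fibration $G^r\to (G^r)_{hPG}\to BPG$. This section splits the long exact sequence of the fibration into split short exact sequences, giving $\pi_k((G^r)_{hPG})\cong \pi_k(G)^r\times \pi_{k-1}(PG)$ with no well-definedness to check. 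The connectivity estimate then transports this to $\XC{r}(G)^{good}$. The fixed point $e$ is doing exactly the work your $\sigma$ was meant to do, but it lives in $G^r$ rather than in the good locus, which is why the paper's argument is clean and yours is not: there is no $PG$-fixed point in $\hom(\F_r,G)^{good}$. (Separately, the paper uses an algebraic lemma of Miyata --- an abstract isomorphism $B\cong A\times C$ forces a short exact sequence $0\to A\to B\to C\to 0$ of finitely generated abelian groups to split --- to conclude that the sequence in Theorem~\ref{splitting} itself splits.)
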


The following algebraic fact now implies that the short exact sequences (\ref{eq:SES}) admit (non-canonical) splittings.

\begin{lem} If $0\to A\srt{i} B\srt{q} C\to 0$ is a short exact sequence of finitely generated abelian groups, and there exists an isomorphism $B\srt{\isom} A\cross C$, then the sequence splits.
\end{lem}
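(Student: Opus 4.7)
The plan is to apply $\hom(C, -)$ to the given short exact sequence and run a cardinality argument, after first reducing to the case where the quotient is finite.

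To set up the reduction, write $C = C_t \oplus C_f$ with $C_t$ the (finite) torsion subgroup and $C_f$ a free abelian group of finite rank. Since $C_f$ is free, the pulled-back sequence $0 \to A \to q^{-1}(C_f) \to C_f \to 0$ admits a section $s_f \co C_f \to B$, and a direct check shows that $B = q^{-1}(C_t) \oplus s_f(C_f)$ as an internal direct sum. Setting $B' := q^{-1}(C_t)$, we obtain a new short exact sequence $0 \to A \to B' \to C_t \to 0$. Combining the given isomorphism $B \isom A \oplus C_t \oplus C_f$ with the identification $B \isom B' \oplus C_f$, the Krull--Schmidt cancellation theorem for finitely generated abelian groups yields $B' \isom A \oplus C_t$. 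Thus, a splitting $s_t \co C_t \to B'$ of the reduced sequence combines with $s_f$ via $(c_t + c_f) \mapsto s_t(c_t) + s_f(c_f)$ to split the original sequence, so it suffices to handle the reduced sequence.

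For the reduced sequence, apply $\hom(C_t, -)$ to obtain the exact sequence
\[ 0 \to \hom(C_t, A) \to \hom(C_t, B') \to \hom(C_t, C_t) \to \mathrm{Ext}^1(C_t, A). \]
Because $C_t$ is finite, each $\hom$ group here is finite (in particular, $\hom(C_t, A) = \hom(C_t, A_t)$ since homomorphisms from a finite group to a free group vanish). The reduced sequence splits iff $\mathrm{id}_{C_t}$ lies in the image of $\hom(C_t, B') \to \hom(C_t, C_t)$; by exactness, this image has cardinality $|\hom(C_t, B')|/|\hom(C_t, A)|$. But the abstract isomorphism $B' \isom A \oplus C_t$ gives $\hom(C_t, B') \isom \hom(C_t, A) \oplus \hom(C_t, C_t)$, so the image has cardinality $|\hom(C_t, C_t)|$ and the map is surjective. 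In particular, $\mathrm{id}_{C_t}$ lies in the image, producing the desired section $s_t$.

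The main obstacle is the finiteness reduction: the Hom-counting argument is only meaningful once all the Hom groups in question are finite, which forces us to peel off the free part of $C$ before counting. Once Krull--Schmidt cancellation gives $B' \isom A \oplus C_t$ in the reduced sequence, the counting step is immediate; skipping the reduction would leave size comparisons ill-defined whenever $C$ has positive rank.
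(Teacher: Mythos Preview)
Your argument is correct. The reduction to the case of finite $C$ is carried out cleanly: the internal decomposition $B=q^{-1}(C_t)\oplus s_f(C_f)$ is verified directly, and cancellation of the free summand $C_f$ is legitimate because all groups in sight are finitely generated abelian (so the structure theorem applies). The counting step is also sound: once $C_t$ is finite, all the $\hom$ groups are finite, exactness forces the image of $\hom(C_t,B')\to\hom(C_t,C_t)$ to have order $|\hom(C_t,B')|/|\hom(C_t,A)|$, and the abstract isomorphism $B'\cong A\oplus C_t$ makes this equal to $|\hom(C_t,C_t)|$.

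By contrast, the paper gives no argument at all: it simply cites Miyata's note \cite{Miyata}, which establishes (over a PID) that if $M$ is finitely generated and $N\leqs M$ satisfies $M\cong N\oplus M/N$ abstractly, then $N$ is a direct summand. Your proof is therefore strictly more informative than what appears in the paper, and is self-contained. Miyata's own argument, incidentally, proceeds along broadly similar lines (reduction via the free part, then an analysis of the torsion case), so your approach is in the same spirit as the cited reference even though the paper itself does not display it.
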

\begin{proof} 
This follows from the results in \cite{Miyata}.
\end{proof}

We now prepare for the proof of Proposition~\ref{good-htpy}, which will be at the end of this section.

Given a topological group $K$, we let $BK$ be its classifying space and $\pi\co EK \to BK$ denote a universal principal $K$-bundle; that is, a (right) principal $K$-bundle with $EK$ contractible. We note that there are at least two functorial constructions of $EK$: Milnor's infinite join construction (which works for all topological groups) and the standard simplicial bar construction (which works for all Lie groups). Either of these models will suffice for our purposes below.

\begin{defn} Let $K$ be a topological group, and let $X$ be a $($left$)$ $K$-space. Then the \e{homotopy orbit space} for the action of $K$ on $X$ is the space
$$X_{hK} := EK \cross_K X = (EK \cross X)/K,$$
where $K$ acts by $(e, x)\cdot k = (ek, k^{-1} x)$.
\end{defn}

We record some standard facts regarding homotopy orbit spaces.

There is  a natural map $p_X \co X_{hK}\to BK$, induced by the projection $EK\to BK$. This map is a fiber bundle with fiber $X$, locally trivial over each open set in $BK$ over which $EK$ is trivial.

The next fact may be found, for instance, in Atiyah-Bott~\cite[Section 13]{AB}.

\begin{lem}\label{free} Let $X$ be a $K$-space such that the projection map $X\to X/K$ is a principal $K$-bundle. Then the map $X_{hK} \to X/K$, sending $[(e, x)]\in EK\cross_K X$ to $[x]\in X/K$, is a weak homotopy equivalence.
\end{lem}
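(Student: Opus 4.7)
The plan is to exhibit $q\co X_{hK}\to X/K$, $[(e,x)]\mapsto [x]$, as a locally trivial fiber bundle whose fibers are copies of the contractible space $EK$, and then read off the weak equivalence from the long exact sequence of a fibration.

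First I would use the principal bundle hypothesis to produce an open cover $\{U_\alpha\}$ of $X/K$ together with $K$-equivariant trivializations $\phi_\alpha\co \pi^{-1}(U_\alpha)\srm{\isom} U_\alpha\cross K$, where $K$ acts on $U_\alpha\cross K$ by translation on the second factor. Applying the functor $EK\cross_K(-)$ to each trivialization gives
$$q^{-1}(U_\alpha)\;=\;EK\cross_K\pi^{-1}(U_\alpha)\;\isom\;EK\cross_K(U_\alpha\cross K).$$
The assignment $[(e,u,h)]\mapsto (u,eh)$ is then a well-defined homeomorphism onto $U_\alpha\cross EK$, with inverse $(u,e)\mapsto [(e,u,1_K)]$; well-definedness uses only the defining relation $(e,x)\sim(ek,k^{-1}x)$ for $EK\cross_K X$. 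Thus $q$ is locally of the form $U_\alpha\cross EK\to U_\alpha$, i.e., a fiber bundle over $X/K$ with fiber $EK$.

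Since $EK$ is contractible, each fiber of $q$ is weakly contractible. The long exact sequence of homotopy groups of the fibration $EK\to X_{hK}\srm{q} X/K$ then collapses to isomorphisms $\pi_k(X_{hK})\srm{\isom}\pi_k(X/K)$ for every $k\geqs 0$ and every choice of basepoint, which is exactly the desired weak homotopy equivalence.

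The only mildly delicate point is the verification of local triviality with fiber $EK$; once that is in hand the rest is immediate. An alternative route avoiding any explicit local computation is to observe that $EK\cross X\to X_{hK}$ is itself a principal $K$-bundle while the $K$-equivariant projection $EK\cross X\to X$ is a homotopy equivalence (since $EK$ is contractible), and to then compare the two principal $K$-bundles $EK\cross X\to X_{hK}$ and $X\to X/K$ under this equivalence. Either approach yields the weak equivalence $X_{hK}\heq X/K$.
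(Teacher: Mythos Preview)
Your argument is correct and is the standard one: exhibiting $q$ as a fiber bundle with contractible fiber $EK$ via the local trivializations of the principal bundle $X\to X/K$ is exactly how this is usually done. The paper, however, does not give a proof at all---it simply records the statement as a known fact and cites Atiyah--Bott \cite[Section 13]{AB}, so there is nothing to compare your argument against beyond noting that your proof supplies the details the paper omits.
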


Recall that a map $f\co X\to Y$ is said to be $n$-connected if the induced map on homotopy groups is an isomorphism in degrees less than $n$ and is surjective in degree $n$. 

\begin{lem}\label{n-ctd} If $X\to Y$ is an equivariant map of $K$-spaces, and $f$ is $n$-connected, then so is the   map $f_{hK}\co X_{hK}\to Y_{hK}$ induced by $f$.
\end{lem}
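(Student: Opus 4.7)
My plan is to reduce the statement to the standard five-lemma comparison of Serre fibrations. As recalled immediately before the statement of the lemma, $p_X\co X_{hK}\to BK$ and $p_Y\co Y_{hK}\to BK$ are fiber bundles with fibers $X$ and $Y$ respectively, and naturality of the Borel construction $(-)_{hK}$ in the $K$-space variable produces a commutative diagram of fiber bundles
\begin{equation*}
\xymatrix{ X \ar[r] \ar[d]_{f} & X_{hK} \ar[r]^{p_X} \ar[d]^{f_{hK}} & BK \ar@{=}[d]\\ Y \ar[r] & Y_{hK} \ar[r]^{p_Y} & BK }
\end{equation*}
in which the map on the base is the identity and the map on fibers is $f$.

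I would then pick a basepoint in $BK$ (which is path-connected because $EK$ is contractible and surjects onto it), and compare the long exact sequences of homotopy groups of these two fibrations. The resulting ladder of exact sequences has every third vertical arrow equal to the identity on $\pi_*(BK)$, while the remaining vertical arrows are those induced by $f$ on $\pi_*(X)$, $\pi_*(Y)$ and by $f_{hK}$ on $\pi_*(X_{hK})$, $\pi_*(Y_{hK})$. For each $k\leqs n-1$, the $n$-connectivity hypothesis makes $f_*$ an isomorphism on both $\pi_k$ and $\pi_{k-1}$, and the five lemma then forces $(f_{hK})_*\co\pi_k(X_{hK})\to\pi_k(Y_{hK})$ to be an isomorphism. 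For $k=n$, $f_*$ is only surjective on $\pi_n$ but still bijective on $\pi_{n-1}$, and the four lemma gives surjectivity of $(f_{hK})_*$ on $\pi_n$. Together these conclusions amount to $n$-connectivity of $f_{hK}$.

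The only mildly delicate point is at the bottom of the exact sequence, where $\pi_0$ is a pointed set rather than a group; however, the hypothesis that $f$ is $n$-connected with $n\geqs 0$ already supplies $\pi_0(X)\surjects\pi_0(Y)$, and the standard pointed-set formulation of the five lemma (for exact sequences of groups acting on pointed sets, as in the fibration LES) applies without modification. Thus no substantive obstacle arises, and the argument is essentially mechanical once the diagram of bundles above is in place.
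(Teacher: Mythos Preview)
Your argument is correct and is exactly the approach taken in the paper: compare the long exact homotopy sequences of the two fibrations $X_{hK}\to BK$ and $Y_{hK}\to BK$ via the commutative ladder induced by $f$, and apply the Five Lemma. The paper states this in a single sentence, whereas you have spelled out the surjectivity at degree $n$ and the $\pi_0$ subtlety, but there is no substantive difference.
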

\begin{proof} The lemma  follows by applying the Five Lemma to the 
diagram of long exact sequences in homotopy induced by the 
 commutative diagram
$$\xymatrix{X\ar[r]\ar[d]^f &X_{hK} \ar[r]^{p_X}  \ar[d]^{f_{hK}} & BK \ar[d]^=\\
Y \ar[r] &Y_{hK} \ar[r]^{p_Y}   & BK.
}$$
\end{proof}

With these lemmata complete, we now prove Proposition \ref{good-htpy}.
\begin{proof}[Proof of Proposition \ref{good-htpy}]
Consider the principal $PG$-bundle 
$$PG \maps \hom(\F_r,G)^{good} \maps \XC{r}(G)^{good}.$$
Lemma~\ref{free} shows that we have a weak equivalence
\begin{equation}\label{eq:good1}\XC{r}(G)^{good} \heq (\hom(\F_r,G)^{good})_{hPG}.\end{equation}
By Lemma~\ref{homotopy-codim}, the inclusion
$$\hom(\F_r,G)^{good}\injects G^r$$
is a $(C_{pasbon} - 1)$--connected map, so Lemma~\ref{n-ctd} implies the induced map 
\begin{equation}\label{eq:good2} (\hom(\F_r,G)^{good})_{hPG} \maps (G^r)_{hPG}
\end{equation}
is $(C_{pasbon} - 1)$--connected as well.
Since the identity element  $e\in G^r$ is fixed under conjugation, the fibration
\begin{equation}\label{eq:hPG}(G^r)_{hPG}\to BPG\end{equation}
admits a splitting, defined by $[x]\mapsto [x, e] \in EPG\cross_{PG} G^r$;  this splitting is continuous because the map $EPG \to  EPG\cross_{PG} G^r$, $x\mapsto [x,e]$, is continuous, and $BG \isom EPG/PG$ (as holds for all principal bundles). It follows that the long exact sequence associated to (\ref{eq:hPG}) breaks up into into \e{split} short exact sequences of the form
\begin{equation}\label{eq:hPG2}0 \maps \pi_k (G^r) \maps \pi_k\left( (G^r)_{hPG}\right) \maps \pi_k (BPG) \maps 0.
\end{equation}

For any Lie group $H$, we have $\pi_k (BH) \isom \pi_{k-1} (H)$ for each $k\geqs 1$, so the split short exact sequences (\ref{eq:hPG2}) yield
$$\pi_k  \left( (G^r)_{hPG}\right)\isom \pi_k (G)^r \cross \pi_{k-1} (PG)$$
for $k\geqs 1$. We saw above that the map (\ref{eq:good2}) is $(C_{pasbon} - 1)$-connected, so this completes the proof of Proposition~\ref{good-htpy}.
\end{proof}

\section{A generalization of Schur's lemma}\label{Schur}

In this section we will characterize connected, reductive $\C$-groups containing no bad subgroup. These are called CI-groups (see \cite{Si4} for definition). We will also give a rather simple description for the bad locus of character varieties in simply connected semisimple $\C$-groups. 

If $G$ is semisimple, let $\Lambda_G$ denote the lattice generated by the roots in the dual of the Lie algebra of a fixed Cartan subgroup of $G$ (see \cite[Chapter 23]{FH}). 

We begin with a lemma about the centralizers of BdS subgroups. We shall use the fact, which follows from the definition, that a BdS subgroup of $G$ is defined, up to conjugation, by a sub-root system of the root system of $G$ with identical rank. The lemma itself comes from Borel and de Siebenthal's article (done in the compact case but is essentially identical), see \cite{BdS}.

\begin{lem}\label{ZBdS}
Let $S$ be a BdS subgroup of a connected, reductive $\C$-group $G$. Then $Z_G(S)=Z(S)$ and furthermore, we have an isomorphism between 
\[Z_G(S)/Z(G)\text{ and } \Lambda_G/\Lambda_S.\]
\end{lem}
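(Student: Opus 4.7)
The plan is to use the regularity of $\mathfrak{s}$ to reduce the computation of $Z_G(S)$ to character-theoretic data on a maximal torus, then extract $Z_G(S)/Z(G)$ as Pontryagin dual to $\Lambda_G/\Lambda_S$.

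First I would fix a Cartan subgroup $T$ of $G$ contained in $S$ (which exists because $S$ and $G$ have the same rank), and note that by Lemma~\ref{regular}, the Lie algebra $\mathfrak{s}$ decomposes as $\mathfrak{t} \oplus \bigoplus_{\alpha \in \Delta_S} \mathfrak{g}_\alpha$ for some sub-root system $\Delta_S \subset \Delta$. Since $S$ is connected, $Z_G(S) = Z_G(\mathfrak{s})$, and any element centralizing $\mathfrak{s}$ must centralize $\mathfrak{t}$, hence lies in $Z_G(T) = T$. This forces $Z_G(S) \subset T \subset S$, and intersecting with $S$ yields $Z_G(S) = Z_G(S) \cap S = Z(S)$, proving the first assertion.

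For the second assertion, I would use that for $t \in T$ one has $\mathrm{Ad}(t) X_\alpha = \alpha(t) X_\alpha$, so $t \in Z_G(S)$ if and only if $\alpha(t) = 1$ for every $\alpha \in \Delta_S$, equivalently $\chi(t) = 1$ for every $\chi \in \Lambda_S$. Under the identification $T \cong \mathrm{Hom}(X^*(T), \C^*)$, this yields
\[ Z_G(S) \cong \mathrm{Hom}(X^*(T)/\Lambda_S, \C^*), \qquad Z(G) \cong \mathrm{Hom}(X^*(T)/\Lambda_G, \C^*), \]
the latter being the same calculation applied to the full root system $\Delta$.

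Next I would apply the exact functor $\mathrm{Hom}(-,\C^*)$ (exactness coming from divisibility of $\C^*$) to the short exact sequence
\[ 0 \to \Lambda_G/\Lambda_S \to X^*(T)/\Lambda_S \to X^*(T)/\Lambda_G \to 0, \]
producing
\[ 0 \to Z(G) \to Z_G(S) \to \mathrm{Hom}(\Lambda_G/\Lambda_S, \C^*) \to 0, \]
so $Z_G(S)/Z(G) \cong \mathrm{Hom}(\Lambda_G/\Lambda_S, \C^*)$. Because $S$ and $G$ have equal rank and $\mathfrak{s}/\mathfrak{z}(\mathfrak{g})$ is semisimple, $\Lambda_S$ and $\Lambda_G$ have the same $\Z$-rank, so $\Lambda_G/\Lambda_S$ is a finite abelian group; Pontryagin duality then gives $\mathrm{Hom}(\Lambda_G/\Lambda_S, \C^*) \cong \Lambda_G/\Lambda_S$ (non-canonically, but that suffices).

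The main obstacle will be handling the reductive (non-semisimple) case cleanly: one has to verify that $\Lambda_G/\Lambda_S$ is still finite when there is a central torus, which reduces to checking that the rank of $\Lambda_S$ equals the semisimple rank of $G$ using the hypothesis that $\mathfrak{s}/\mathfrak{z}(\mathfrak{g})$ is semisimple of full rank inside $\mathfrak{g}/\mathfrak{z}(\mathfrak{g})$. Once that rank equality is confirmed, the argument above applies verbatim.
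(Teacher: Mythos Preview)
Your argument is correct and follows essentially the same line as the paper's: both reduce to showing $Z_G(S)\subset T$ via $Z_G(T)=T$, then identify $Z_G(S)$ and $Z(G)$ inside $T$ as the common kernels of the root characters in $\Lambda_S$ and $\Lambda_G$ respectively. The only difference is packaging: the paper passes through the exponential map and the integer-dual lattices $\Gamma_S,\Gamma_G\subset\mathfrak{h}$, obtaining $Z_G(S)/Z(G)\cong\Gamma_S/\Gamma_G$ and then invoking the perfect pairing between $\Gamma_S/\Gamma_G$ and $\Lambda_G/\Lambda_S$, whereas you stay on the character-lattice side and apply exactness of $\mathrm{Hom}(-,\C^*)$ directly; your formulation handles the central torus in the reductive case a bit more transparently, but the two are dual versions of the same computation.
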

\begin{proof}
Let $H$ be a Cartan subgroup of $S$ (it is then a Cartan subgroup in $G$). Since $H$ is a Cartan subgroup of $G$, $Z_G(H)=H$ whence $Z_G(S)\leqs Z_G(H)\leqs S$, therefore $Z_G(S)=Z(S)$. 

Fix $\alpha_1,\dots, \alpha_r$ a system of simple roots for $G$ and $\beta_1,\dots, \beta_r$ a system of simple roots for $S$. Let $\Gamma_G$ (respectively $\Gamma_S$) be the lattice of elements in $\mathfrak{h}$ which are sent to integers via the functionals in $\Lambda_G$ (respectively $\Lambda_S$). 

Using the functoriality of the exponential map, one sees that an element $h=\exp(X)\in H$ will commute with all elements in $G$ (respectively $S$) if and only if $X$ belongs to $2\sqrt{-1}\pi \Gamma_G$ (respectively $2\sqrt{-1}\pi \Gamma_S$). Whence $Z_G(S)/Z(G)$ is isomorphic to $2\sqrt{-1}\pi \Gamma_S/2\sqrt{-1}\pi \Gamma_G$ which is isomorphic to $\Gamma_S/\Gamma_G$. 

Finally, since $\Lambda_G/\Lambda_S$ is finite, there is a perfect pairing between $\Lambda_G/\Lambda_S$ and $\Gamma_S/\Gamma_G$ induced by the perfect pairing $\mathfrak{h}^*\times \mathfrak{h}\to \mathbb{C}$. In particular,  $\Gamma_S/\Gamma_G$ is isomorphic to  $\Lambda_G/\Lambda_S$.\end{proof}

We immediately deduce the following corollary:
\begin{cor}
Any BdS subgroup in a connected, reductive $\C$-group is bad.
\end{cor}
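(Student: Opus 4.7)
The plan is to verify the two defining conditions for $S$ to be \emph{bad}: (i) $S$ is not contained in any proper parabolic subgroup of $G$, and (ii) $Z_G(S)\neq Z(G)$. Condition (ii) follows from Lemma~\ref{ZBdS} once we know $\Lambda_S\subsetneq \Lambda_G$, while condition (i) requires an independent argument using the centers of Levi subalgebras.

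For (i), suppose toward contradiction that $S\subset P$ for some proper parabolic $P$ with Levi decomposition $P=L\ltimes R_u(P)$. Since $\mathfrak{s}$ contains a Cartan subalgebra $\mathfrak{h}$, and its root system $\Delta_{\mathfrak{s}}$ is stable under negation (as $\mathfrak{s}$ is reductive), while the $\mathfrak{h}$-roots appearing in $R_u(P)$ form a one-sided set, the root space decomposition forces $\mathfrak{s}\subset \mathfrak{l}$. Because $P$ is a proper parabolic, $\mathfrak{z}(\mathfrak{l})\supsetneq \mathfrak{z}(\mathfrak{g})$. Using the description $\mathfrak{z}(\mathfrak{s})=\{h\in \mathfrak{h} : \alpha(h)=0 \text{ for all } \alpha\in \Delta_{\mathfrak{s}}\}$ (which applies because $\mathfrak{s}$ is regular), the inclusion $\Delta_{\mathfrak{s}}\subset \Delta_{\mathfrak{l}}$ yields $\mathfrak{z}(\mathfrak{l})\subset \mathfrak{z}(\mathfrak{s})$. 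But the BdS assumption that $\mathfrak{s}/\mathfrak{z}(\mathfrak{g})$ is semisimple forces $\mathfrak{z}(\mathfrak{s})=\mathfrak{z}(\mathfrak{g})$, producing $\mathfrak{z}(\mathfrak{l})\subset \mathfrak{z}(\mathfrak{g})$, which is the desired contradiction.

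For (ii), Lemma~\ref{ZBdS} identifies $Z_G(S)/Z(G)\cong \Lambda_G/\Lambda_S$, so it suffices to establish that $\Lambda_S$ is a proper sublattice of $\Lambda_G$. By Lemma~\ref{regular2} we may reduce to the case where $\mathfrak{g}$ is simple, since a BdS of a reductive $\mathfrak{g}$ is built from a BdS in a single simple factor together with the other factors unchanged, and only the active factor contributes to $\Lambda_G/\Lambda_S$. The main obstacle, and the technical heart of the proof, is then to verify that any proper BdS subsystem of a simple root system has strictly smaller root lattice. This uses the Borel--de Siebenthal classification: any proper BdS arises (possibly iteratively) from the extended Dynkin diagram by removing a node $\alpha_i$ whose coefficient $c_i$ in the expansion $\alpha_0=\sum c_j\alpha_j$ of the highest root satisfies $c_i\geqs 2$ --- removing a coefficient-$1$ node yields a root subsystem of the same type and cardinality as the original, so does not produce a proper BdS. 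For $c_i\geqs 2$, the relation defining $-\alpha_0$ does not allow $\alpha_i$ to be written as an integer combination of the remaining simple roots together with $-\alpha_0$; hence $\alpha_i\notin \Lambda_S$ and $\Lambda_G/\Lambda_S$ is non-trivial, completing the verification.
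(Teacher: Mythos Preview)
Your proof is correct and follows the same strategy as the paper: establish irreducibility via a center/centralizer comparison with a Levi subgroup, and establish $Z_G(S)\neq Z(G)$ via Lemma~\ref{ZBdS}. The paper's argument for (i) is phrased at the group level (using $\dim Z_G(S)=\dim Z(G)<\dim Z_G(L)$), while you work at the Lie-algebra level with the inclusion $\mathfrak{z}(\mathfrak{l})\subset\mathfrak{z}(\mathfrak{s})=\mathfrak{z}(\mathfrak{g})$; these are the same idea.

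For (ii), the paper simply asserts that $\Lambda_G=\Lambda_S$ would force $G=S$, whereas you supply the underlying justification via the Borel--de Siebenthal algorithm on the extended Dynkin diagram, showing $|\Lambda_G/\Lambda_S|=c_i\geqs 2$ for a proper maximal BdS. Your added detail is welcome, since the implication $\Lambda_G=\Lambda_S\Rightarrow \Delta_S=\Delta_G$ is not entirely trivial (it uses that $\Delta_S$ is a \emph{closed} subsystem, which rules out examples like the short-root $A_1\times A_1$ inside $B_2$). One small imprecision: a BdS subalgebra of a reductive $\mathfrak{g}$ need not involve only a \emph{single} simple factor---several $\mathfrak{s}_i$ could be proper simultaneously---but your conclusion is unaffected since $\Lambda_G/\Lambda_S\cong\bigoplus_i\Lambda_{G_i}/\Lambda_{S_i}$ and at least one summand is nonzero.
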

\begin{proof}
Let $G$ be a connected, reductive $\C$-group and $S$ be a BdS subgroup. Because of the preceding lemma, if we had $Z_G(S)=Z(G)$ then we would have $\Lambda_G=\Lambda_S$. This is impossible because this would imply that $G=S$. As a result, $Z_G(S)\neq Z(G)$. 

It is a routine verification to show that if $L$ is a Levi subgroup of a parabolic subgroup of $G$ then $\dim_\C Z_G(L)>\dim_\C Z(G)$.  If $S$ were contained in a parabolic subgroup then it would be contained in one of its Levi subgroups since $S$ is reductive and we would therefore have $\dim Z_G(S)>\dim Z(G)$. The preceding lemma implies that $\dim Z_G(S)=\dim Z(G)$, and so $S$ is not contained in any parabolic subgroup of $G$. Therefore, $S$ is irreducible. \end{proof}

It is easy to see that $\SL_n(\C)$ and $\GL_n(\C)$ are CI by Schur´s lemma (see \cite[Lemma 3.5]{FL2}), and $\mathrm{O}_n(\C),\Sp_{2n}(\C), \p\SL_n(\C)$ are not CI (see \cite[Proposition 3.32]{FL2}).  In \cite[Question 19]{Si4}, Sikora asks: Are $\GL_n(\C)$ and $\SL_n(\C)$ the only CI-groups?  We now give a characterization of such groups, answering Sikora's question.

\begin{thm}\label{CI-thm}
A connected, reductive $\C$-group is a CI-group if and only if its derived subgroup is a product of special linear groups. 
\end{thm}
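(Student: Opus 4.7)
The plan is to prove the two implications separately, reducing the easier direction to Schur's lemma and the harder direction to a Heisenberg-type construction of bad subgroups.

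For the reverse implication, suppose $DG \cong \prod_i \SL_{n_i}$. To show $G$ is CI, I would use the central isogeny $G \cong DG \times_F T$ (where $T$ is a central torus and $F = DG \cap T$ is finite). Any element of $G$ lifts to $(d,t) \in DG \times T$, and since $T$ is central and abelian, $(d,t)$ centralizes an irreducible $H \leq G$ if and only if $d$ centralizes the preimage $H_D \leq DG$ of $H$. Parabolic subgroups of $G$ correspond to parabolic subgroups of $DG$, so $H_D$ is irreducible in $\prod_i \SL_{n_i}$, hence each projection $\pi_i(H_D)$ is irreducible in $\SL_{n_i}$. Classical Schur's lemma then gives $Z_{\SL_{n_i}}(\pi_i(H_D)) = Z(\SL_{n_i})$, and assembling the factors yields $Z_G(H) = Z(G)$.

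For the forward implication, suppose $G$ is CI. First, I show each simple factor of $\widetilde{DG}$ is of type $A$. By the corollary preceding this theorem, every BdS subgroup is bad, so CI forces $G$ to have no BdS subgroups and hence $\mathfrak{g}$ to have no BdS subalgebras. By Lemma~\ref{regular2} and the Borel--de Siebenthal classification, a proper semisimple full-rank subalgebra can be produced in every simple Lie algebra not of type $A$ by removing a node from the extended Dynkin diagram; hence each simple factor of $\widetilde{DG}$ must be of type $A$, that is, isomorphic to some $\SL_{n_i}$. Next, I claim the central isogeny $\widetilde{DG} \to DG$ is an isomorphism. Supposing instead that its kernel $\Gamma \leq Z(\widetilde{DG}) = \prod_i \mu_{n_i}$ is non-trivial, I will construct an irreducible subgroup $H \leq \widetilde{DG}$ and a non-central element $g \in \widetilde{DG}$ with $[g,H] \subseteq \Gamma$; the images of $H$ and $g$ in $DG$ then produce a bad subgroup, contradicting CI.

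For the explicit construction, pick $\gamma = (\gamma_1,\dots,\gamma_k) \in \Gamma\setminus\{1\}$ and write $\gamma_i = \omega_i^{n_i/m_i}$, where $\omega_i$ is a primitive $n_i$-th root of unity and $m_i = \mathrm{ord}(\gamma_i)$ (setting $m_i = 1$ when $\gamma_i = 1$). In each factor $\SL_{n_i}$, let $X_i = \mathrm{diag}(1,\omega_i,\dots,\omega_i^{n_i-1})$ and $Y_i$ be the cyclic shift (each with appropriate scalar corrections to lie in $\SL_{n_i}$); the subgroup $\langle X_i, Y_i\rangle$ is the generalized Heisenberg group with $[X_i, Y_i] = \omega_i I$, acting irreducibly on $\C^{n_i}$. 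Form the diagonal embeddings $X := (X_i)_i$ and $Y := (Y_i)_i$ in $\widetilde{DG}$, and take $H := \langle X, Y\rangle$; each projection of $H$ is the irreducible Heisenberg, so $H$ itself is irreducible in $\widetilde{DG}$. Finally, let $g := (X_i^{n_i/m_i})_i$, which is non-central since $X_i^{n_i/m_i}$ is non-scalar at any index $i$ with $m_i > 1$. A short computation using the relation $Y_i^b X_i^c = \omega_i^{-bc} X_i^c Y_i^b$ shows $[g, X^a Y^b] = \gamma^b \in \langle\gamma\rangle \subseteq \Gamma$ for all integers $a,b$.

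The main obstacle is the coordination in this diagonal construction. A naive product of Heisenbergs $\prod_i \langle X_i, Y_i\rangle$ produces commutators of the form $(\gamma_i^{b_i})_i$ which lie in $\prod_i \langle\gamma_i\rangle$ but not in general in $\Gamma$, particularly when $\Gamma$ fails to decompose as a product of cyclic subgroups one per factor (for instance, when $\Gamma$ sits diagonally, as in $\SO(4,\C) \cong (\SL_2 \times \SL_2)/\mu_2^{\mathrm{diag}}$). Using the single diagonal pair $(X,Y)$ forces all $Y$-exponents to coincide across factors, so every commutator is a genuine power of $\gamma$ and hence lies in $\Gamma$, producing the required bad subgroup and completing the contradiction.
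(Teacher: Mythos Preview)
Your proof is correct and follows essentially the same strategy as the paper: reduce to the derived subgroup, use the Borel--de Siebenthal classification (via the corollary that every BdS subgroup is bad) to force each simple factor of $\widetilde{DG}$ to be of type $A$, and then rule out non-trivial central quotients of $\prod_i \SL_{n_i}$ by an explicit Heisenberg-type commutator construction. The only real variation is in this last step: the paper's Lemma~\ref{SLquo} takes the irreducible subgroup $S$ to be the (infinite) group generated by the full block-diagonal subgroup $D_{n_1,d_1}\times\cdots\times D_{n_s,d_s}$ together with a single block-permutation $M$, whereas you use the finite diagonal Heisenberg $\langle X,Y\rangle$; your explicit discussion of the coordination obstacle (why the single diagonal pair is needed rather than an independent product of Heisenbergs) makes clear a point the paper leaves implicit.
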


\begin{proof}
First, notice that if $\pi:G_1\to G_2$ is a finite covering of connected, reductive $\C$-groups and $S$ is a bad subgroup of $G_1$ then $\pi(S)$ is a bad subgroup of $G_2$. 

Secondly, for any connected, reductive $\C$-group $G$ there is a finite cover $Z(G)\times [G,G]\to G$ sending $(z,s)$ to $zs$. If $S$ is a bad subgroup of $[G,G]$ then it is a bad subgroup of  $Z(G)\times [G,G]$ whence it is a bad subgroup of $G$. On the other hand, if $S$ is a bad subgroup of $G$ then $\langle Z(G),S\rangle\cap [G,G]$ is a bad subgroup of $[G,G]$. So that $G$ contains a bad subgroup if and only if $[G,G]$ contains a bad subgroup. As a result, it suffices to show that among semisimple groups, the only CI-groups are the ones that are products of $\SL_n(\mathbb{C})$ for potentially varying $n$. 

If $G$ is simply connected and semisimple then $G$ is isomorphic to a product of simple simply connected groups $G_1$,\dots,$G_m$. Thus, $G$ contains a bad subgroup if and only if there exists $i$ such that $G_i$ contains a bad subgroup. Because of Schur's lemma, simple groups isomorphic to $\SL_n(\mathbb{C})$ do not contain bad subgroups and because of Table \ref{BdS} any other simply connected simple group contains a BdS subgroup and thus a bad subgroup by the preceding corollary. So the only CI-groups among simply connected semisimple groups are products of special linear groups. 

Thus, if $G$ is semisimple and a CI-group, the first sentence of this proof implies that the universal cover of $G$ has to be a product of special linear groups. Furthermore, one can construct a bad subgroup in any non-trivial quotient of a special linear group (see Lemma \ref{SLquo}). 

Therefore, $G$ is CI if and only if $DG$ is isomorphic to a product of special linear groups. 
\end{proof}

\begin{lem}\label{SLquo}
Let $G$ be a product of special linear groups and $C$ be a non-trivial central subgroup of $G$, then $G/C$ is not a CI-group.
\end{lem}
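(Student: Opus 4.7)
The plan is to exhibit an irreducible subgroup $\bar H\leqs G/C$ whose centralizer strictly contains $Z(G/C)=Z(G)/C$. Writing $G=\prod_{i=1}^m\SL_{n_i}(\C)$ we have $Z(G)=\prod_i\mu_{n_i}$. First I would fix a non-trivial $c=(\zeta_1,\dots,\zeta_m)\in C$ and set $I=\{i:\zeta_i\neq 1\}$; for $i\in I$ let $d_i=|\langle\zeta_i\rangle|$, so $d_i>1$, $d_i\mid n_i$, and write $n_i=d_ik_i$. The driving idea is that each $\zeta_i$ can be realized as a Heisenberg commutator inside $\SL_{n_i}$: choosing $a_i$ with $\zeta_{d_i}^{a_i}=\zeta_i$ and letting $D_i,P_i$ be appropriate scalar multiples (to have determinant one) of $D_{d_i}^{a_i}\otimes I_{k_i}$ and $P_{d_i}\otimes I_{k_i}$, where $D_{d_i}=\mathrm{diag}(1,\zeta_{d_i},\dots,\zeta_{d_i}^{d_i-1})$ and $P_{d_i}$ is the cyclic shift, one has $[D_i,P_i]=\zeta_iI_{n_i}$.

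Next I would assemble the bad subgroup. For each $i\in I$ fix an irreducible $\Gamma_i\leqs\SL_{k_i}$ (e.g.\ a Zariski-dense two-generator subgroup) and let $H_i\leqs\SL_{n_i}$ be generated by $D_i$, $P_i$, and $\{I_{d_i}\otimes B:B\in\Gamma_i\}$; a Schur-type argument on $\C^{d_i}\otimes\C^{k_i}$ shows $H_i$ acts irreducibly on $\C^{n_i}$, because $\langle D_{d_i},P_{d_i}\rangle$ is irreducible on $\C^{d_i}$ and $\Gamma_i$ is irreducible on $\C^{k_i}$. For $i\notin I$ pick any irreducible $H_i\leqs\SL_{n_i}$. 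Now let $H\leqs G$ be generated by the two ``diagonal'' elements $h^D=(D_i)_{i\in I}$ and $h^P=(P_i)_{i\in I}$ (extended by $1$ in coordinates outside $I$), together with auxiliary generators $h^{(j)}$ whose $i$-th coordinates lie in $\{I_{d_i}\otimes B:B\in\Gamma_i\}$ for $i\in I$ and in $H_i$ for $i\notin I$, chosen so that $\pi_i(H)=H_i$ for every $i$. Since proper parabolics of $G$ are products of parabolics of the factors, irreducibility of each projection forces $H$ to be irreducible in $G$; hence $\bar H:=HC/C$ is irreducible in $G/C$.

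Finally I would set $g=(g_i)_i$ with $g_i=D_i$ for $i\in I$ and $g_i=1$ otherwise. Since $D_{d_i}^{a_i}$ has distinct eigenvalues (as $\gcd(a_i,d_i)=1$ and $d_i>1$), $g$ is non-scalar, so $[g]\notin Z(G)/C=Z(G/C)$. On generators one computes $[g,h^D]=1$, $[g,h^P]=(\zeta_i)_i=c$, and $[g,h^{(j)}]=1$ (because $D_i$ commutes with everything of the form $I_{d_i}\otimes B$). Because all these values lie in the center $Z(G)$, the map $h\mapsto[g,h]$ is a homomorphism $H\to Z(G)$, and on generators it takes values in $\langle c\rangle\subseteq C$; hence $[g,h]\in C$ for every $h\in H$, so $[g]$ centralizes $\bar H$ in $G/C$, showing $\bar H$ is bad.

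The main obstacle I must design around is the following pitfall: naively taking a bad subgroup of a single quotient $\SL_{n_{i_0}}/\pi_{i_0}(C)$ and crossing it with the remaining factors does not work, because the resulting commutators land in $\pi_{i_0}(C)\times\{1\}\times\cdots\times\{1\}$, which need not lie in $C$ (for example $C=\{\pm(I,I)\}\subset\SL_2\times\SL_2$). Constructing $h^P$ as a ``diagonal'' element that reproduces $c$ coordinate by coordinate, while forcing the auxiliary generators to centralize $g$ outright, is precisely what makes the commutators land in $C$ itself. A minor technicality is the determinant normalization needed to place $D_i,P_i$ in $\SL_{n_i}$ when $d_i$ is even, which is absorbed into the choice of scalar multiples without disturbing the relation $[D_i,P_i]=\zeta_i I_{n_i}$.
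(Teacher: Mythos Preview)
Your proof is correct and follows essentially the same strategy as the paper: both use Heisenberg-type pairs $(D_i,P_i)$ in each factor so that $[g,h^P]=c\in C$ while $g$ commutes with the block-diagonal generators, and then invoke irreducibility of each projection to conclude irreducibility in $G$. The only cosmetic differences are that the paper uses the full block-diagonal subgroup $D_{n_i,d_i}\subset\SL_{n_i}$ in place of your $I_{d_i}\otimes\Gamma_i$, and treats all coordinates uniformly rather than singling out the set $I$.
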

\begin{proof}  Let $n>1$, let $\xi$ be a primitive $n$-th root of the unity and $d$ dividing $n$. We define 
$$g_{n,d}=\lambda_{n,d}\begin{pmatrix}I_{\frac{n}{d}}&&&\\ &\xi^{\frac{n}{d}}I_{\frac{n}{d}}&&\\&&\ddots&\\&&&\xi^{\frac{n}{d}(d-1)}I_{\frac{n}{d}} \end{pmatrix}\text{ and }M_{n,d}:=\lambda_{n,d}\begin{pmatrix}&&&I_{\frac{n}{d}}\\ I_{\frac{n}{d}}&&&\\&\ddots&&\\&&I_{\frac{n}{d}}&\end{pmatrix}$$
where $\lambda_{n,d}$ is chosen so that $\det(g_{n,d})=\det(M_{n,d})=1$. It follows that $g_{n,d}$ and $M_{n,d}$ are in $\SL_n(\mathbb{C})$ and satisfy $[g_{n,d},M_{n,d}]=\xi^{\frac{n}{d}}$.

One sees that $M_{n,d}$ acts by conjugation on  the subgroup $D_{n,d}$ of $\SL_n(\mathbb{C})$ generated by unimodular matrices which are diagonal by blocks of size $n/d$.The group generated by $D_{n,d}$ and $M_{n,d}$ acts naturally on $\mathbb{C}^n$ and fixes no proper non-trivial subspace of $\mathbb{C}^n$. It follows that the group generated by  $D_{n,d}$ and $M_{n,d}$ is irreducible.

Let  $G$ be $\SL_{n_1}(\mathbb{C})\times\cdots\times \SL_{n_s}(\mathbb{C})$ and $C$ a non-trivial central subgroup of $G$. We take $c\in C$ such that $c\neq 1_G$ and write $c=(\xi_1^{\frac{n_1}{d_1}}I_{n_1},\dots,\xi_s^{\frac{n_s}{d_s}}I_{n_s})$ where $d_i$ divides $n_i$ and $\xi_i$ is a  $n_i$-th root of unity. We denote $\pi :G\to G/C$ the natural projection.

Let $g=(g_{n_1,d_1},\dots,g_{n_s,d_s})$, $M=(M_{n_1,d_1},\dots,M_{n_s,d_s})$ and $S$ be the group generated by $M$ and $D_{n_1,d_1}\times\cdots\times D_{n_s,d_s}$. Because the projection of $S$ for each factor of $G$ is irreducible, $S$ is itself irreducible and thus $\pi(S)$ is too. Furthermore $[g,M]=c$ by construction. Since $g$ commutes with $D_{n_1,d_1}\times\cdots\times D_{n_s,d_s}$, we deduce from this $Z_ {G/C}(\pi(S))$ contains $\pi(g)$. Since $g$ is not central in $G$, it follows that $G/C$ is not a CI-group.\end{proof}

\begin{rem}
In Section \ref{GBU-sec}, it is shown that $\X_r(G)^{red}\subset \X_r(G)^{sing}$ if $r\geqs 3$, or $r\geqs 2$ and the rank of the simple factors of the Lie algebra of $DG$ are at least 2.  Conversely, if $r=2$ there are semisimple Lie groups $G$ of arbitrarily large rank so $\X_r(G)$ contains smooth reducibles; \cite[Example 7.2]{FLR}.   These two facts together resolve the first part of \cite[Conjecture 3.34]{FL2}.  The second part of \cite[Conjecture 3.34]{FL2} states that $\X_r(G)^{red}=\X_r(G)^{sing}$ if and only if $DG$ is isomorphic to a product of special linear groups.  Given that we have shown in Section 2, that bad representations are singular whenever $r\geqs 3$, or $r\geqs 2$ and the rank of the simple factors of the Lie algebra of $DG$ are at least 2, this conjecture is equivalent to statement that the only CI groups are those whose derived subgroup is a product of special linear groups.  So the above theorem affirmatively resolves the second part of \cite[Conjecture 3.34]{FL2} too.
\end{rem}

As a result of the above theorem, Schur's Lemma (elements commuting with an irreducible subgroup are central) is true in only one simple $\C$-group: the special linear group. The main reason for CI-groups $G$ are interesting is that the irreducible locus of the $G$-character variety of a free group/surface group is a manifold (see \cite{FL2,Si4}). Next, we focus on the case when $G$ is simply connected.

The first lemma is fundamental to our discussion. It is true in greater generality than we state (see \cite[Chapter 4]{OnVi}). 

\begin{lem}
Let $G$ be a semisimple simply connected $\C$-group and $g$ a semisimple element in $G$. Then $Z_G(g)$ is connected. 
\end{lem}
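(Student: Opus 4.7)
The plan is to reduce the statement to a classical theorem of Steinberg on Weyl group stabilizers in simply connected groups. Write $H = Z_G(g)$ with identity component $H^0$; the goal is $H = H^0$. First I would fix a maximal torus $T$ of $G$ containing $g$ (which exists because $g$ is semisimple), set $\mathfrak{h} = \mathrm{Lie}(T)$, and let $\Delta$ be the root system of $G$ relative to $T$. By Lemma~\ref{semisimplecentralizers}, $\mathrm{Lie}(H)$ is the regular reductive subalgebra with root subsystem $\Delta' = \{\alpha \in \Delta : \alpha(g) = 1\}$, so $T \subset H^0$, and $H^0$ is the connected reductive subgroup generated by $T$ together with the root subgroups $U_\alpha$, $\alpha \in \Delta'$. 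Its Weyl group relative to $T$ is the reflection subgroup $W(\Delta') \subset W := N_G(T)/T$.

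Next I would identify the component group of $H$ as a Weyl group quotient. For any $h \in H$, the torus $hTh^{-1}$ is again a maximal torus of the connected group $H^0$ (it is connected, centralizes $g$, and has maximal dimension), so some $h' \in H^0$ satisfies $h'h \in N_G(T)$; hence $H = H^0 \cdot (N_G(T) \cap H)$. Since $W$ acts on $T$ by conjugation, $N_G(T) \cap H$ maps onto the isotropy group $W_g = \{w \in W : w \cdot g = g\}$ with kernel $T$, while $N_{H^0}(T)$ maps onto $W(\Delta')$ with kernel $T$. Together these yield
\[ H/H^0 \;\cong\; W_g / W(\Delta'), \]
so the connectedness of $H$ is equivalent to the equality $W_g = W(\Delta')$.

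What remains is Steinberg's theorem: for $G$ semisimple and simply connected, $W_g = W(\Delta')$ for every $g \in T$. The standard argument passes through the affine Weyl group. Identify $T \cong \mathfrak{h}/2\pi\sqrt{-1}\, X_*(T)$ and lift $g$ to $\tilde g \in \mathfrak{h}$. Any $w \in W_g$ satisfies $w\tilde g - \tilde g \in 2\pi\sqrt{-1}\, X_*(T)$, so $w$ combined with this translation is an element of $X_*(T) \rtimes W$ fixing $\tilde g$. The simple connectedness of $G$ is used precisely here: it forces $X_*(T) = Q^\vee$, the coroot lattice, so $X_*(T) \rtimes W$ is the ordinary affine Weyl group, a Coxeter group acting on $\mathfrak{h}$ by affine reflections in the hyperplanes $\alpha = k$, $\alpha \in \Delta$, $k \in \mathbb{Z}$. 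Bourbaki's theorem on isotropy in Coxeter groups then says the stabilizer of $\tilde g$ is generated by the affine reflections through walls containing $\tilde g$; their linear parts are precisely the reflections $s_\alpha$ with $\alpha \in \Delta'$, so $w \in W(\Delta')$. The main obstacle is this last step, and it is where the simply connected hypothesis is essential: without it $X_*(T) \supsetneq Q^\vee$, producing extra cocharacter translations that can enlarge $W_g$ beyond $W(\Delta')$ -- which is precisely the mechanism producing bad representations of Type~2 (Definition~\ref{types}) in the non-simply connected setting.
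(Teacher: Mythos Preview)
Your proof is correct and is precisely the classical Steinberg argument. The paper itself gives no proof at all: it simply cites Humphreys, \emph{Conjugacy Classes in Semisimple Algebraic Groups}, where the result appears (as the connectedness theorem for centralizers in simply connected groups). Your sketch is essentially what one finds there: reduce $H/H^0$ to $W_g/W(\Delta')$ via conjugacy of maximal tori in $H^0$, then invoke the affine Weyl group argument, using that simple connectedness forces $X_*(T)=Q^\vee$ so that $X_*(T)\rtimes W$ is the genuine affine Coxeter group and Bourbaki's isotropy theorem applies. Your closing remark, that failure of $X_*(T)=Q^\vee$ in the non-simply connected case is exactly what produces the Type~2 bad representations of Definition~\ref{types}, is a nice observation connecting the lemma to the rest of the paper; the paper does not make this link explicit.
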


\begin{proof}
A proof is given in \cite{Humphreys-conjugacy}, for example. 
\end{proof}

The next corollary is a direct consequence of this lemma.

\begin{cor}\label{bad-cor}
Let $G$ be a semisimple simply connected $\C$-group and $\rho:\F_r\to G$ a bad representation. Then $\rho(\F_r)$ is contained in a BdS subgroup. 
\end{cor}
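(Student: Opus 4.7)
The plan is to combine the preceding connectedness lemma with the irreducibility of $\rho$ to show that $\rho(\F_r)$ actually lies inside $Z_G(\xi)$ for some non-central semisimple $\xi$, and that in the simply connected semisimple setting this centralizer must itself be a BdS subgroup. Roughly, the extra rigidity coming from simple connectedness eliminates the ``Type 1'' alternative in Proposition~\ref{LBDS}.

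First I would use the fact that $\rho$ is bad to pick $\xi \in Z_G(\rho) \setminus Z(G)$; by Proposition~\ref{commsemi}, $\xi$ is semisimple. Since $G$ is simply connected and semisimple, the preceding lemma gives that $Z_G(\xi)$ is connected, so there is no need to pass to an identity component as in the proof of Proposition~\ref{LBDS}. Because every element of $\rho(\F_r)$ commutes with $\xi$, we get the crucial inclusion $\rho(\F_r) \subset Z_G(\xi)$. By Lemma~\ref{semisimplecentralizers}, the Lie algebra $\mathfrak{z}_\mathfrak{g}(\xi)$ is regular and reductive, so $Z_G(\xi)$ is a connected reductive subgroup of $G$; it is proper because $\xi\notin Z(G)$, and it has the same rank as $G$ because $\xi$ lies in some maximal torus, which is then contained in $Z_G(\xi)$.

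The key step is to show that $Z_G(\xi)$ is in fact semisimple (equivalently, since $\mathfrak{z}(\mathfrak{g})=0$, that $\mathfrak{z}_\mathfrak{g}(\xi)/\mathfrak{z}(\mathfrak{g})$ is semisimple), which together with the previous paragraph will make $Z_G(\xi)$ a BdS subgroup of $G$ containing $\rho(\F_r)$. I would argue by contradiction: suppose $C := Z(Z_G(\xi))^0$ is a non-trivial torus. Then $C$ is not contained in $Z(G)$ (since $G$ is semisimple, $Z(G)$ is finite), so its centralizer $Z_G(C)$ is a Levi subgroup of a proper parabolic subgroup of $G$ by the standard fact recalled in the Preliminaries. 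Since $C \subset Z(Z_G(\xi))$, we have $Z_G(\xi) \subset Z_G(C)$, so $\rho(\F_r) \subset Z_G(C)$ sits in a proper parabolic, contradicting irreducibility of $\rho$. Hence $Z(Z_G(\xi))^0$ is trivial, $Z_G(\xi)$ is semisimple, and we are done.

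The main obstacle is the last step, specifically the verification that $Z_G(C)$ is a proper parabolic Levi. This is essentially a bookkeeping check using the connected semisimple hypothesis on $G$ (to ensure $Z(G)$ is finite, so that any positive-dimensional torus in $G$ fails to be central), combined with the classical description of centralizers of tori in reductive groups as Levi subgroups; everything else in the proof is an immediate application of the preceding lemma and of facts already developed in the excerpt.
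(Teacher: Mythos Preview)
Your proof is correct and follows essentially the same approach as the paper's: both pick a non-central semisimple $\xi\in Z_G(\rho)$, use the simply connected hypothesis to get $Z_G(\xi)$ connected (so that $\rho(\F_r)\subset Z_G(\xi)$ lies in a connected full-rank reductive subgroup), and then rule out the Levi/parabolic alternative by irreducibility of $\rho$. The only difference is cosmetic: the paper phrases the dichotomy as ``$\mathfrak{z}_\mathfrak{g}(\xi)$ is either contained in a parabolic subalgebra or is a BdS subalgebra'' (implicitly invoking the argument of Proposition~\ref{LBDS}), whereas you explicitly build the torus $C=Z(Z_G(\xi))^0$ and its centralizer to reach the same contradiction.
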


\begin{proof}
Let $g$ be an element commuting with $\rho(\F_r)$. Since $\rho$ is irreducible, $g$ is semisimple (by Proposition \ref{commsemi}). From Lemma \ref{semisimplecentralizers}, it follows that $\mathfrak{z}_{\mathfrak{g}}(g)$ is either contained in a parabolic subalgebra or is a BdS subalgebra. Because of the preceding lemma, $Z_G(g)$ needs to be connected. As a result, if $\mathfrak{z}_{\mathfrak{g}}(g)$ were contained in a parabolic subalgebra then $Z_G(g)$ would be contained in a parabolic subgroup which would contradict the irreducibility of $\rho$. It follows that $\mathfrak{z}_{\mathfrak{g}}(g)$ is a BdS subalgebra and therefore $Z_G(g)$ is a BdS subgroup. 
\end{proof}

As a result, if we want to compute the bad locus of $G$-character varieties when $G$ is simply connected, it suffices to understand the irreducible characters (equivalence classes of irreducible representations) that factor through the inclusion of maximal BdS subgroups in $G$.

We now illustrate this principle with the lowest rank exceptional Lie group.  By definition (see for example \cite{B-S}, \cite{FH} or \cite{Rac}), $G_2$ is the automorphism group of a non-commutative, non-associative complex algebra $\mathbb{O}_\C:=\mathbb{O}\otimes_\R \C$ of complex dimension $8$ (the {\it bi-octonians}), where $\mathbb{O}$ is the usual octonians.

Since $G_2$ is simply connected, bad subgroups are contained in BdS subgroups by Corollary \ref{bad-cor}. From Table \ref{BdS}, we see that for $G_2$ there are only two types of BdS subgroups: type $A_2$ and $A_1\times A_1$. 

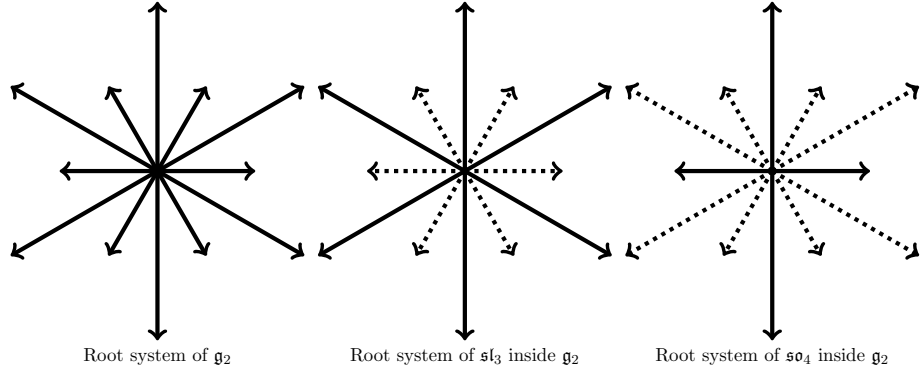
\begin{figure}[!ht]
\begin{tikzpicture}[scale=0.65]
\node[scale=0.65] at (0,-3.8) {Root system of $\mathfrak{g}_2$};
\draw[ultra thick,->,color=black] (0,0)--({2.0011*cos(0)},{2.0011*sin(0)});
\draw[ultra thick,->] (0,0)--({2.0011*cos(60)},{2.0011*sin(60)});
\draw[ultra thick,->] (0,0)--({2.0011*cos(120)},{2.0011*sin(120)});
\draw[ultra thick,->] (0,0)--({2.0011*cos(180)},{2.0011*sin(180)});
\draw[ultra thick,->] (0,0)--({2.0011*cos(240)},{2.0011*sin(240)});
\draw[ultra thick,->] (0,0)--({2.0011*cos(300)},{2.0011*sin(300)});

\draw[ultra thick,->](0,0)--({2.0011*1.73205081*cos(30)},{2.0011*1.73205081*sin(30)});
\draw[ultra thick,->](0,0)--({2.0011*1.73205081*cos(90)},{2.0011*1.73205081*sin(90)});
\draw[ultra thick,->,color=black](0,0)--({2.0011*1.73205081*cos(150)},{2.0011*1.73205081*sin(150)});
\draw[ultra thick,->](0,0)--({2.0011*1.73205081*cos(30)},{-2.0011*1.73205081*sin(30)});
\draw[ultra thick,->](0,0)--({2.0011*1.73205081*cos(90)},{-2.0011*1.73205081*sin(90)});
\draw[ultra thick,->](0,0)--({2.0011*1.73205081*cos(150)},{-2.0011*1.73205081*sin(150)});

\end{tikzpicture}
\begin{tikzpicture}[scale=0.65]
\node[scale=0.65] at (0,-3.8) {Root system of $\mathfrak{sl}_3$ inside $\mathfrak{g}_2$};
\draw[ultra thick,->,dotted] (0,0)--({2.0011*cos(0)},{2.0011*sin(0)});
\draw[ultra thick,->,dotted] (0,0)--({2.0011*cos(60)},{2.0011*sin(60)});
\draw[ultra thick,->,dotted] (0,0)--({2.0011*cos(120)},{2.0011*sin(120)});
\draw[ultra thick,->,dotted] (0,0)--({2.0011*cos(180)},{2.0011*sin(180)});
\draw[ultra thick,->,dotted] (0,0)--({2.0011*cos(240)},{2.0011*sin(240)});
\draw[ultra thick,->,dotted] (0,0)--({2.0011*cos(300)},{2.0011*sin(300)});

\draw[ultra thick,->](0,0)--({2.0011*1.73205081*cos(30)},{2.0011*1.73205081*sin(30)});
\draw[ultra thick,->](0,0)--({2.0011*1.73205081*cos(90)},{2.0011*1.73205081*sin(90)});
\draw[ultra thick,->,color=black](0,0)--({2.0011*1.73205081*cos(150)},{2.0011*1.73205081*sin(150)});
\draw[ultra thick,->](0,0)--({2.0011*1.73205081*cos(30)},{-2.0011*1.73205081*sin(30)});
\draw[ultra thick,->](0,0)--({2.0011*1.73205081*cos(90)},{-2.0011*1.73205081*sin(90)});
\draw[ultra thick,->](0,0)--({2.0011*1.73205081*cos(150)},{-2.0011*1.73205081*sin(150)});

\end{tikzpicture}
\begin{tikzpicture}[scale=0.65]
\node[scale=0.65] at (0,-3.8) {Root system of $\mathfrak{so}_4$ inside $\mathfrak{g}_2$};
\draw[ultra thick,->,color=black] (0,0)--({2.0011*cos(0)},{2.0011*sin(0)});
\draw[ultra thick,->,dotted] (0,0)--({2.0011*cos(60)},{2.0011*sin(60)});
\draw[ultra thick,->,dotted] (0,0)--({2.0011*cos(120)},{2.0011*sin(120)});
\draw[ultra thick,->] (0,0)--({2.0011*cos(180)},{2.0011*sin(180)});
\draw[ultra thick,->,dotted] (0,0)--({2.0011*cos(240)},{2.0011*sin(240)});
\draw[ultra thick,->,dotted] (0,0)--({2.0011*cos(300)},{2.0011*sin(300)});

\draw[ultra thick,->,dotted](0,0)--({2.0011*1.73205081*cos(30)},{2.0011*1.73205081*sin(30)});
\draw[ultra thick,->](0,0)--({2.0011*1.73205081*cos(90)},{2.0011*1.73205081*sin(90)});
\draw[ultra thick,->,color=black,dotted](0,0)--({2.0011*1.73205081*cos(150)},{2.0011*1.73205081*sin(150)});
\draw[ultra thick,->,dotted](0,0)--({2.0011*1.73205081*cos(30)},{-2.0011*1.73205081*sin(30)});
\draw[ultra thick,->](0,0)--({2.0011*1.73205081*cos(90)},{-2.0011*1.73205081*sin(90)});
\draw[ultra thick,->,dotted](0,0)--({2.0011*1.73205081*cos(150)},{-2.0011*1.73205081*sin(150)});

\end{tikzpicture}
\caption{BdS Subalgebras of $\mathfrak{g}_2$.}\label{g2root}
\end{figure}

In the second diagram in Figure \ref{g2root}, the sub-root system is of index $3$ while in the third diagram, the sub-root system is of index $2$. It follows (see Chapter 23 \S 2 in \cite{FH}) that the center of the BdS subgroup of type $A_2$ is of order $3$ and the center of the BdS subgroup of type $A_1\times A_1$ is of order $2$. Then, the two BdS subgroups are identified as  $\SL_3(\mathbb{C})$ and $\SO_4(\mathbb{C})$.

These two subgroups may be constructed using the minimal dimensional representation of $G_2$. 

The algebra $\mathbb{O}_\C=\mathbb{O}\otimes_\R \C$ contains a copy of $\mathbb{C}\otimes_{\mathbb{R}}\mathbb{C}$ and $\mathbb{H}\otimes_{\R}\C$ as subalgebras (where $\mathbb{H}$ is the quaternions). 
The subgroup $\SL_3(\mathbb{C})$ can be identified as the subgroup of $G_2$ that point-wise fixes the sub-algebra $\mathbb{C}\otimes_{\mathbb{R}}\mathbb{C}$, while $\SO_4(\mathbb{C})$ can be identified as the stabilizer of the sub-algebra 
$\mathbb{H}\otimes_{\R}\C$.

As a result bad representations in $G_2$ correspond to irreducible representations stabilizing non-degenerate sub-algebras of $\mathbb{O}_\C$. 

Lastly, note that the map from $\X_r(S)$ to $\X_r(G)$  induced by the inclusion of $S$ into $G$ has no reason to be injective in general. For instance, one may check that $\X_r(\SL_3(\mathbb{C}))^{irr}$ to $\X_r(G_2)^{irr}$ is $2$-to-$1$ onto its image. This follows from the fact that $\SL_3(\mathbb{C})$ is of index $2$ in its $G_2$-normalizer.   The corresponding map for $\SO_4(\mathbb{C})$ is more complicated. 

\newpage
\appendix
\section{Maximal Parabolic and BdS Subalgebras} \label{appa}

In this appendix, we compute the codimension of Lie subalgebras of simple Lie algebras referred to in the proof of Theorem \ref{codimbad}. In the first table, we consider the codimension of a Levi subalgebra $\mathfrak{l}$ in a maximal parabolic subalgebra of the corresponding simple Lie algebra.

\begin{table}[!ht]
$\begin{array}{|c|c|l|c|c|}
\hline
\mathfrak{g}&\dim_{\mathbb{C}}\mathfrak{g}&\multicolumn{1}{|c|}{[\mathfrak{l}_k,\mathfrak{l}_k]}&\codim_{\mathbb{C}}(\mathfrak{g},\mathfrak{l}_k)&\min_{k}\codim_{\mathbb{C}}(\mathfrak{g},\mathfrak{l}_k)\\
\hline\hline
A_r& r(r+2)&  A_{k-1}+A_{r-k},\ 1\leqs k\leqs r &2k(r+1-k)&2r \\ \hline
B_r& r(2r+1)& A_{k-1}+B_{r-k},\ 1\leqs k\leqs r &k(4r+1-3k) &2(2r-1) \\ \hline
C_r& r(2r+1)& A_{k-1}+C_{r-k},\ 1\leqs k\leqs r &k(4r+1-3k) &2(2r-1)\\ \hline
 
D_r& r(2r-1)& A_{k-1}+D_{r-k},\ 1\leqs k\leqs r-3 & k(4r-1-3k)&r(r-1),\text{ if }r=3,4\\
    &        & A_{r-3}+A_1+A_1,\ k=r-2&  r^2+3r-10  & 4(r-1),\text{ if }r> 4\\ 
    &        & A_{r-1},\ k=r-1,r & r^2-r & \\ \hline
    
 G_2& 14& A_1,\ k=1,2&10&10\\ \hline
 F_4& 52 &C_3,\ k=1 & 30 & 30 \\
    &     & A_1+A_2,\ k=2,3 & 40 & \\
    &      & B_3,\ k=4& 30 & \\ \hline
 E_6& 78 & D_5,\ k=1,5 & 32&32 \\
    &    & A_1+A_4,\ k=2,4&50 & \\
    &    & A_1+A_2+A_2,\ k=3&58 & \\
    &    & A_5,\ k=6 &42& \\ \hline
 E_7& 133 &D_6,\ k=1 &66 &54 \\
 &&A_1+A_5,\ k=2 &94 &\\
 &&A_1+A_2+A_3,\ k=3&106 &\\
 &&A_4+A_2,\ k=4&100 &\\
 &&D_5+A_1,\ k=5& 84&\\ 
 &&E_6,\ k=6&54 &\\
 &&A_6,\ k=7&84 & \\ \hline
 
 E_8& 248 &E_7,\ k=1      &114 & 114\\ 
    &     & A_1+E_6,\ k=2 &166 & \\ 
    &      &A_2+D_5,\ k=3 &194 & \\
    &      &A_3+A_4,\ k=4 &208 & \\
    &      &A_4+A_2+A_1,\ k=5&212 & \\
    &      &A_6+A_1,\ k=6 &196 & \\
    &      & D_7,\ k=7 &156 & \\
    &      &A_7, k=8& 184  &\\ \hline
 \end{array}$
 
\caption{Classification of Levi subalgebras in maximal parabolic subalgebras of simple Lie algebras.}\label{Levi}
\end{table}

To emphasize the ambient algebra, we write $\codim_{\mathbb{C}}(\mathfrak{g},\mathfrak{s})$ for the codimension of $\mathfrak{s}$ in $\mathfrak{g}$ in the above (and below) table.  

We recall, that once we choose a Cartan subalgebra $\mathfrak{h}$ and a set of simple roots $\{\alpha_1,\dots,\alpha_r\}$ to go with it, conjugacy classes of maximal parabolic subalgebras in simple Lie algebras are in one-to-one correspondence with the set of simple roots (see Chap. IV, 14.17 in \cite{Borel} for instance).  Now $\mathfrak{l}_k$ refers to a Levi subalgebra in the maximal parabolic subalgebra $\mathfrak{p}_k$ associated to the simple root $\alpha_k$ (this description of simple roots is as in Chapter 22 of \cite{FH}). 
 
Using this correspondence, the root system (and thus the isomorphism class)  of $[\mathfrak{l}_k,\mathfrak{l}_k]$ can easily be seen by removing the corresponding node on the Dynkin diagram. Finally, one uses the fact that for Levi subalgebras of maximal parabolic subalgebras, one has \[\mathfrak{l}_k=[\mathfrak{l}_k,\mathfrak{l}_k]\oplus\mathbb{C}.\]

Using the fact that $\codim_{\mathbb{C}}(\mathfrak{g},\mathfrak{l}_k)=2\codim_{\mathbb{C}}(\mathfrak{g},\mathfrak{p}_k)$, one also has the minimal codimension of parabolic subalgebras. 

In the second table, we compute the codimension of BbS subalgebras relevant to Theorem \ref{codimbad}. 

\begin{table}[!ht]
$\begin{array}{|c|c|l|c|c|}
\hline
\mathfrak{g}&\dim_{\mathbb{C}}\mathfrak{g}&\multicolumn{1}{|c|}{\mathfrak{s}}&\codim_{\mathbb{C}}(\mathfrak{g},\mathfrak{s})&\min_{\mathfrak{s}}\codim_{\mathbb{C}}(\mathfrak{g},\mathfrak{s})\\
\hline\hline
A_r& r(r+2)&  \multicolumn{1}{|c|}{\emptyset}&\emptyset &\emptyset \\ \hline
B_r& r(2r+1)& D_k+B_{r-k},\ 2\leqs k\leqs r& 2k(2r-2k-1)& 2r\\ \hline
C_r& r(2r+1)& C_k+C_{r-k} ,\ 1\leqs k\leqs r-1& 4k(r-k)&4(r-1),\ r\geqs 2\\ \hline
D_r& r(2r-1)& D_k+D_{r-k},\ 2\leqs k \leqs r-2& 4k(r-k)&8(r-2),\ r\geqs 3\\ \hline
G_2& 14& A_1+\widetilde{A_1},\ k=1& 8&6 \\ 
   &   & A_2,\ k=2                &6 & \\ \hline
F_4& 52 & A_1+C_3,\ k=1 &28 & 16\\
   &     &A_2+\widetilde{A_2},\ k=2 &36 & \\ 
   &     & A_3+\widetilde{A_1},\ k=3 &34 & \\
   &     & B_4,\ k=4 & 16& \\ \hline
E_6& 78 & A_5+A_1,\ k=2 &40 &40 \\ 
   &    & A_2+A_2+A_2,\ k=3 &54 & \\ \hline
E_7& 133 &D_6+A_1,\ k=1,6 &64 &64 \\
   &     &A_7,\ k=2 &90 & \\
   &     &A_5+A_2,\ k=3,5 &100 & \\
   &     &A_3+A_3+A_1,\ k=4 &70 & \\ \hline
E_8& 248 &D_8,\ k=1 &112 & 112\\ 
   &     &A_8,\ k=2 &162 & \\
   &      &A_1+A_7,\ k=3 &188 & \\ 
   &      &A_1+A_2+A_5,\ k=4 & 200&\\
   &      &A_4+A_4,\ k=5& 202&\\ 
   &      &D_5+A_3,\ k=6 & 182&\\ 
   &      &A_2+E_6,\ k=7 & 128& \\
   &      &A_1+E_7,\ k=8 & 168&\\ \hline
\end{array}$
\caption{Classification of maximal BdS subalgebras in simple Lie algebras.}\label{BdS}
\end{table}
We recall from \cite{Dyn} or \cite{Tit} that one can associate to any simple root of $\mathfrak{g}$ a BdS subalgebra of $\mathfrak{g}$. Furthermore, all maximal BdS subalgebras (if any) can be chosen among these subalgebras (however, not all such BdS subalgebras are maximal, see \cite{Tit}). 

In Table \ref{BdS} we use the same enumeration as in Table \ref{Levi}.  Also, the symbol $\widetilde{X}$ used in this table denotes a non-conjugate copy of the group $X$. The isomorphism class of the corresponding BdS subalgebra can also be read off the Dynkin diagram. In practice, one needs to add the minimal root of the root system to the Dynkin diagram and delete the $k$-th node to get the Dynkin diagram of the BdS subalgebra. One can compute its dimension from this.

\end{document}